\documentclass[12pt,reqno]{amsart}
\usepackage{graphicx} 
\usepackage{longtable} 
\usepackage{makecell}  
\usepackage{multirow}
\usepackage{enumerate}
\usepackage{amsmath, amsthm} 
\usepackage{amsfonts}
\usepackage{amssymb}
\usepackage{fullpage} 
\usepackage{xcolor}
\usepackage{url}

\newtheorem{definition}{Definition}[subsection]
\newtheorem{example}[definition]{Example}

\newtheorem*{theorem*}{Theorem}
\newtheorem{theoremIntro}{Theorem} 
\newtheorem{theorem}{Theorem}[section] 
\newtheorem{remark}[theorem]{Remark}
\newtheorem{lemma}[theorem]{Lemma}
\newtheorem{proposition}[theorem]{Proposition}
\newtheorem{corollary}[theorem]{Corollary}

\usepackage{multirow}
\usepackage{mathabx,amsmath,amscd, latexsym, amssymb, bbold}
 
\usepackage[numbers]{natbib}
\usepackage{amsmath, amssymb, tikz}

\newcommand{\C }{\mathcal{C}}
\newcommand{\B }{\mathcal{B}}
 \newcommand{\FPdim}{\text{FPdim}}
 \newcommand{\FQ }{\mathbb{Q}}
\newcommand{\g}{\mathfrak{g}}
\newcommand{\Y }{\mathcal{Z}}
\newcommand{\Z }{\mathbb{Z}}

\title{Realizing modular data from  centers of near-group categories}
\date{}

\author[yangzhou]{Zhiqiang Yu\textsuperscript{1}}
\email{zhiqyumath@yzu.edu.cn}

\author[ucsb]{Qing Zhang\textsuperscript{2}}
\email{qingzhang@ucsb.edu}

\address[1]{School of Mathematical Science, Yangzhou University, Yangzhou, 225002,  China}
\address[2]{Department of Mathematics, University of California, Santa Barbara, CA 93106, USA}

\begin{document}
\begin{abstract}
    In this paper, we show the existence of a near-group category of type $\mathbb{Z} / 4\mathbb{Z} \times \mathbb{Z} / 4\mathbb{Z}+16$ and compute the modular data of its Drinfeld center. We prove that a modular data of rank $10$ can be obtained through condensation of the Drinfeld center of the near-group category $\mathbb{Z} / 4\mathbb{Z} \times \mathbb{Z} / 4\mathbb{Z}+16$,  and it can also be realized as  the Drinfeld center of a fusion category of rank $4$. Moreover, we  compute the modular data for the Drinfeld center of a near-group category $\mathbb{Z} / 8\mathbb{Z}+8$ and show that the non-pointed factor of its condensation has the same modular data as the quantum group category $\mathcal{C}(\mathfrak{g}_2, 4)$.
\end{abstract}
\maketitle

\section{Introduction}
 Modular categories over $\mathbb C$ are non-degenerate ribbon fusion categories over $\mathbb C$.  These categories originally appeared in conformal field theory \cite{moore1989classical} and were formally defined in \cite{turaev1992modular} to study (2+1)-dimensional topological quantum field theories. Algebraically, modular categories arise as representation categories of  quantum groups, von Neumann algebras, vertex operator algebras, and local conformal nets. 
 
 One of the central problems in the study of modular categories is their classification. As shown in \cite{bruillard2016rank}, there are finitely many classes of modular tensor categories of a given rank (i.e., the number of isomorphism classes of simple objects) up to tensor equivalence. This finiteness property motivates the rank classification of modular categories.  The classification of low-rank modular categories has been achieved in \cite{RSW09,BNRWbyrank2016, NRWWrank6, ng2023classification}, at least up to their modular data.

 A key invariant used in the classification of modular categories is their modular data, which is a pair of $S$ and $T$ matrices. Given any modular category $\mathcal C$, the modular data defines a projective representation of the modular group $\operatorname{SL}(2, \mathbb Z)$. The kernel of this projective representation is a congruence subgroup of level $N$ thus factors through $\operatorname{SL}(2, \mathbb Z/N\mathbb Z)$, where  $N$  is the order of the  $T$  matrix \cite{ng2010congruence}. Furthermore, this projective representation can be lifted to  linear representations, which also have congruence kernels \cite{dong2015congruence}. 
 
In \cite{NRWWrank6}, a method for classifying modular categories was introduced by constructing modular data directly from the irreducible representations of $\operatorname{SL}(2, \mathbb{Z}/n\mathbb{Z})$ for certain integers $n$. Importantly, irreducible representation of  $\operatorname{SL}(2, \mathbb{Z}/n\mathbb{Z})$ are completely classified in \cite{Nobs1,Nobs2}.  This technique was successfully applied in \cite{NRWWrank6} to classify rank $6$ modular categories up to their modular data. The classification was further extended in \cite{ng2023classification}, where  all modular data up to rank $11$ are explicitly constructed. 

One of the main motivations of this paper is to find a modular category that realize a rank $10$ modular data constructed in  \cite{ng2023classification}.  It was conjectured in \cite{ng2023classification} that this modular data could be realized through the condensation of the Drinfeld center of a near-group category of type  $\mathbb Z/4\mathbb Z\times \mathbb Z/4\mathbb  Z + 16$, provided such a category exists. 

To achieve this, we first prove the existence of such near-group category using results from \cite{izumi2001structure, EvansGannon}.  The next step is deriving the modular data for the center of $\mathbb Z/4\mathbb Z\times \mathbb Z/4\mathbb  Z+ 16$,  a category of rank $304$. One of the computational main challenges is to solve a large system of non-linear equations in \cite{izumi2001structure}, which determine the modular data of the center of the near-group category of $G+n$, where $n$ is the order of $G$. Through this process, we obtain the explicit modular data of the center. In the center of this near-group category, there exists a Tannakian fusion subcategory $\mathcal{E} =\operatorname{Rep}(\mathbb{Z}/2 \mathbb{Z} \times \mathbb{Z}/4 \mathbb{Z})$. We show that the condensation processed by $\mathcal{E}$ yields a   modular category of rank $10$. By further analyzing the congruence representations of $\operatorname{SL}(2, \mathbb Z)$, we identify the resulting rank $10$ modular data with that from  \cite{ng2023classification}. Explicitly, we have the following result:

\begin{theoremIntro}
Let $\mathcal C$ be a near-group category of type $\mathbb Z/4\mathbb Z\times \mathbb Z/4\mathbb  Z +16$ associated with the symmetric bi-character $\langle(g_1, g_2),(h_1, h_2)\rangle=\left(\zeta_4\right)^{g_1 h_1- g_2h_2}$, $g_1,g_2,h_1,h_2\in \mathbb Z/4\mathbb Z\times \mathbb Z/4\mathbb Z$.Then $\mathcal Z(\mathcal{C})$ contains a Tannakian fusion subcategory $\mathcal{E}=\operatorname{Rep}(G)$, where $G=\mathbb{Z} / 2 \mathbb{Z} \times \mathbb{Z} / 4 \mathbb{Z}$. Moreover, the condensation $\mathcal Z(\mathcal{C})_{G}^0$  has the same modular data as in Equation (\ref{eq:MDrank10}).
\end{theoremIntro}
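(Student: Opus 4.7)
The plan is to proceed in four stages, mirroring the structure of the paper. First, I would settle the existence of the near-group category $\mathcal{C}$ of type $\mathbb{Z}/4\mathbb{Z} \times \mathbb{Z}/4\mathbb{Z} + 16$ with the prescribed bi-character. This should follow from Izumi's polynomial-equation formulation of the structural data of near-group categories \cite{izumi2001structure} together with the existence techniques of Evans and Gannon \cite{EvansGannon}: one exhibits a solution of the associativity constraints compatible with the non-degenerate symmetric bi-character $\langle(g_1,g_2),(h_1,h_2)\rangle = \zeta_4^{g_1 h_1 - g_2 h_2}$ and verifies the pentagon axiom. Uniqueness of the category up to equivalence is not needed; a single consistent realization suffices for all subsequent computations.

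Second, I would determine the Drinfeld center $\mathcal{Z}(\mathcal{C})$, which by the general structure of near-group categories of type $G + |G|$ is a modular category of rank $304$ when $|G|=16$. The simple objects and their modular data are governed by a large nonlinear system arising from the tube-algebra / half-braiding equations as specialized in \cite{izumi2001structure}. Isolating a solution yielding non-negative integer fusion coefficients, satisfying the ribbon axioms, and compatible with the chosen bi-character is the computational core and, in my view, the main obstacle of the theorem. Exploiting the $G$-action on the set of solutions, Galois symmetries of the modular data, and the known Frobenius--Perron dimensions of the non-invertible simples of $\mathcal{C}$ should be essential to reduce the system to a tractable form.

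Third, I would locate the Tannakian subcategory $\mathcal{E} \cong \operatorname{Rep}(\mathbb{Z}/2\mathbb{Z} \times \mathbb{Z}/4\mathbb{Z})$ inside $\mathcal{Z}(\mathcal{C})$. Among the invertible objects of the center--whose group of isomorphism classes can be read directly off the computed modular data--I would pick out a subgroup of order $8$ on which every twist is trivial and the restricted $S$-matrix is trivial (up to normalization), so that by Deligne's theorem the spanned symmetric subcategory is Tannakian, hence equivalent to $\operatorname{Rep}(H)$ for some group $H$ of order $8$. The isomorphism type of $H$ is then determined from the fusion rules of the chosen invertibles; matching with the data obtained in step two should single out $H \cong \mathbb{Z}/2\mathbb{Z} \times \mathbb{Z}/4\mathbb{Z}$.

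Finally, I would compute the condensation $\mathcal{Z}(\mathcal{C})_G^0$ as the category of local modules over the regular algebra $\mathrm{Fun}(G) \in \mathcal{E}$, invoking the Kirillov--Ostrik / Davydov--M\"uger--Nikshych--Ostrik framework. The global dimension drops by the factor $|G|^2 = 64$, and the simple objects correspond to $G$-orbits on those simples of the centralizer $\mathcal{E}' \subset \mathcal{Z}(\mathcal{C})$ on which the $G$-action is trivial; a direct count should give rank $10$, and the $S$ and $T$ matrices are then read off from those of $\mathcal{Z}(\mathcal{C})$ by the standard averaging/restriction formulas. To match the resulting modular data with Equation (\ref{eq:MDrank10}), I would use the classification of irreducible congruence representations of $\operatorname{SL}(2,\mathbb{Z}/N\mathbb{Z})$ \cite{Nobs1,Nobs2} together with the fusion coefficients of the low-dimensional simples and a few Galois invariants; these data are known to rigidly determine a rank $10$ modular datum of the relevant level, closing the argument.
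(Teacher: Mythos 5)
Your overall route coincides with the paper's: establish existence via the Izumi--Evans--Gannon equations, solve the half-braiding system to obtain the rank-$304$ center, locate the order-$8$ Tannakian pointed subcategory from the twists in Equation (\ref{eq:pointedST}), and condense by the regular algebra. The gap is in your final step, where you assert that the $S$- and $T$-matrices of $\mathcal{Z}(\mathcal{C})_G^0$ are ``read off from those of $\mathcal{Z}(\mathcal{C})$ by the standard averaging/restriction formulas,'' with the congruence-representation classification used only afterwards to match an already-determined datum against Equation (\ref{eq:MDrank10}). The averaging formula $S_{F(V_1),F(V_2)}=\tfrac{1}{|G|}S_{A\otimes V_1,A\otimes V_2}$ of \cite{KirillovOstrik2001} determines an $S$-entry only when both $F(V_1)$ and $F(V_2)$ remain simple after condensation. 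Here six of the ten simple objects arise from objects $Z_{10},Z_{17},Z_{23}$ on which the algebra $A$ acts with multiplicity two, so each $F(Z_k)$ splits as $(Z_k)_1\oplus(Z_k)_2$ and averaging only yields the sums $\sum_{i,j}S_{(Z_k)_i,(Z_{k'})_j}$. Consequently an entire $6\times 6$ block of the $S$-matrix (the block marked $***$ in (\ref{eq:Swith6unknown})) is not determined by restriction, and neither Galois symmetry nor the Verlinde formula alone resolves it.

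This is precisely where the paper has to work hardest: it first pins down the rows and columns indexed by the non-split objects $F(Y_1)$, $F(W_1)$, $F(W_5)$ by direct arguments, then treats the remaining block as unknown and reconstructs it from the decomposition of the congruence representation $\rho_{\mathcal{D}}$ into irreducibles. Three candidate decompositions survive the $\mathfrak{t}$-spectrum constraint (Lemma \ref{level4case}), and two of them must be eliminated by explicit orthogonality computations with the conjugating orthogonal matrix $U$ (Propositions \ref{case(3,2,1,1)} and \ref{case(3,3,3,1)}) before the surviving decomposition yields the $S$-matrix in Theorem \ref{case(3,3,1,1,1,1)}. So the congruence-representation machinery you relegate to a final ``matching'' step is in fact the mechanism by which the unknown entries are determined; as written, your plan stalls at the split objects without it.
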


Several low-rank modular categories can be constructed using similar methods, such as by factorization and/or condensation of the centers of near-group categories. See \cite{ng2023classification, rowell2025neargroup} for examples of verified cases and conjectures.  However, it remains unclear whether some of these categories can be realized through quantum group categories,  vertex operator algebras or their condensations. 
  
  To verify a conjecture from \cite[Section 4]{rowell2025neargroup}, we compute the center of a near-group category of type  $\mathbb Z/8\mathbb Z + 8$. The existence of this fusion category was established in \cite{EvansGannon}. By computing and analyzing the modular data of the center, we obtain the following result:
    
\begin{theoremIntro}
   
Let $\mathcal{C}$ be a near-group  category of type $\mathbb Z/8\mathbb Z+8$.   Then $\mathcal{Z}(\mathcal{C})$ contains a Tannakian subcategory $\mathcal{E}=\operatorname{Rep}(\mathbb Z/2\mathbb Z)$ such that there is a braided tensor equivalence $\mathcal{Z}(\mathcal{C})_{\mathbb Z/2\mathbb Z}^0\cong\mathcal{C}(\mathbb Z/4\mathbb Z,q)\boxtimes \mathcal{D}$, where $\mathcal{C}(\mathbb Z/4\mathbb Z,q)$ is a pointed modular category and  $\mathcal D$ has the same modular data as $\mathcal{C}(\mathfrak{g}_2,4)$.      
    \end{theoremIntro}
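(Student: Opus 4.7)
The argument will parallel that of the first theorem, specialized to the case $G=\mathbb Z/8\mathbb Z$. Existence of the near-group category $\mathcal C$ of type $\mathbb Z/8\mathbb Z+8$ is already settled in \cite{EvansGannon}, so I would start from that concrete category. The first main step is to compute the modular data of the Drinfeld center $\mathcal Z(\mathcal C)$ using Izumi's framework \cite{izumi2001structure}: the simple objects and half-braidings of the center are parametrized by solutions of a system of polynomial equations determined by $G$ and the chosen symmetric bicharacter. I would specialize this system to $G=\mathbb Z/8\mathbb Z$, solve it by computer algebra in the same fashion as was done for the rank-$304$ center in the proof of the first theorem, and extract explicit $S$- and $T$-matrices of $\mathcal Z(\mathcal C)$.

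Once the modular data is in hand, the Tannakian subcategory $\mathcal E=\operatorname{Rep}(\mathbb Z/2\mathbb Z)$ should be located by inspecting the pointed part: concretely, I would identify an order-two invertible simple object $X$ with $\theta_X=1$ that braids trivially with itself; the subcategory it generates is then Tannakian and isomorphic to $\operatorname{Rep}(\mathbb Z/2\mathbb Z)$. Next I would perform the condensation $\mathcal Z(\mathcal C)^0_{\mathbb Z/2\mathbb Z}$ by the standard de-equivariantization recipe at the level of modular data: restrict to $\mathcal E$-local simples, identify those fixed by the $\mathbb Z/2\mathbb Z$-action, split the fixed orbits, and rescale $S$ and $T$ accordingly. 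I would then inspect the resulting $S$-matrix and isolate four invertible simples closed under fusion whose twists assemble into a nondegenerate quadratic form $q$ on $\mathbb Z/4\mathbb Z$; these determine a pointed modular subcategory $\mathcal C(\mathbb Z/4\mathbb Z,q)$, and M\"uger's decomposition theorem then yields the braided equivalence $\mathcal Z(\mathcal C)^0_{\mathbb Z/2\mathbb Z}\cong \mathcal C(\mathbb Z/4\mathbb Z,q)\boxtimes \mathcal D$, where $\mathcal D$ is the M\"uger centralizer of the pointed factor.

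The final step is to verify that $\mathcal D$ has the same modular data as $\mathcal C(\mathfrak g_2,4)$. I would read off the $S$- and $T$-matrices of $\mathcal D$ from the factorization, compute the standard modular data of $\mathcal C(\mathfrak g_2,4)$ from its quantum-group description, and exhibit a basis permutation matching the two; the rank, fusion rules, and order of $T$ give enough invariants to pin down the identification. The principal obstacle will be the first step: Izumi's nonlinear system is genuinely sizable at $|G|=8$, and obtaining modular data accurate enough to distinguish $\mathcal E$, the $\mathcal C(\mathbb Z/4\mathbb Z,q)$-factor, and the $\mathfrak g_2$-type factor requires careful algebraic (not merely numerical) solution. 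Once the center itself is pinned down, the Tannakian identification, the condensation, and the M\"uger factorization are comparatively routine.
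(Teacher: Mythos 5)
Your overall route is the one the paper follows: solve Izumi's equations for $\mathbb Z/8\mathbb Z+8$ to get the rank-$88$ center, locate the boson ($b=X_4$, with $\theta_{X_4}=1$), condense by $\operatorname{Rep}(\mathbb Z/2\mathbb Z)$, peel off the pointed factor $\mathcal{C}(\mathbb Z/4\mathbb Z,q)$ by M\"uger factorization, and match the remaining rank-$9$ factor against $\mathcal{C}(\mathfrak g_2,4)$ (the paper's match is up to replacing $\zeta_{24}$ by its inverse, i.e.\ a Galois conjugate of the Sage presentation).

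The one place where your plan has a genuine gap is the claim that the condensation is ``comparatively routine'' once the center is known. The standard de-equivariantization recipe (e.g.\ \cite[Lemma 4.2]{KirillovOstrik2001}) determines $S$-entries only between images of \emph{freely} acted-on simples; for the fixed objects that split in two --- here $(Z_{2,6})_1,(Z_{2,6})_2$ and $(W_1)_1,(W_1)_2$ --- it gives only the \emph{sums} $S_{(V)_1,X}+S_{(V)_2,X}$, not the individual entries. Those unknown entries ($u_1,\dots,u_7$ in the paper's notation) are exactly where the work lies: the paper pins them down by combining $S$-matrix orthogonality, an analysis of Galois orbits (showing $(W_1)_1$ and $(W_1)_2$, and likewise $(Z_{2,6})_1$ and $(Z_{2,6})_2$, must lie in a common Galois orbit because an integrality assumption leads to an unsolvable Diophantine condition $2a^2+b^2+(2-b)^2=14$), non-negativity and integrality of Verlinde fusion coefficients such as $N_{(W_1)_1,(W_1)_1}^{F(W_4)}$, and the bound $|S_{X,Y}|\le d_Xd_Y$ from the balancing equation to exclude the non-real case $\overline{u_5}=u_6$. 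Without some argument of this kind your reconstruction of the condensed $S$-matrix would stall at a matrix with undetermined blocks, and the comparison with $\mathcal{C}(\mathfrak g_2,4)$ could not be completed. The rest of your outline (Tannakian identification, M\"uger factorization, and the final matching of modular data) agrees with the paper.
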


The paper is organized as follows. In Section \ref{Preliminary}, we review some basic notions of modular categories. In Section \ref{section3}, we first show the existence of a near-group category $\mathcal{C}$ of type $\mathbb Z/4\mathbb Z\times \mathbb Z/4\mathbb Z+16$ (Proposition \ref{prop:existenceZ4*Z4}).  We then compute the modular data of  $\mathcal{Z}(\mathcal{C})$, and prove that the condensation by Tannakian fusion category $\operatorname{Rep}(\mathbb Z/2\mathbb Z\times\mathbb Z/4\mathbb Z)$ matches the modular data listed in \cite{ng2023classification} (Theorem \ref{case(3,3,1,1,1,1)}), and we show the condesation is also the Drinfeld center of a self-dual fusion category of rank $4$ (Proposition \ref{rank4category}). Finally, we compute the Drinfeld center of a near-group category of type $\mathbb Z/8\mathbb Z+8$  and  obtain the modular data for the non-pointed factor from the condensation of the  center (Theorem \ref{ModdatacondeZ8}). 

Throughout this paper, we use the notation $\mathbb{T}=\{x \in \mathbb{C} ;|x|=1\}$, $\zeta_n = \exp \left(2\pi i/n\right)$ and $\chi_n^m=m+\sqrt{n}$.

\section{Preliminary}\label{Preliminary}

In this section, we recall some basic  notions on braided fusion categories.  For further details, readers are referred to \cite{bakalov2001lectures, EGNO, izumi2001structure, EvansGannon}. 
\subsection{Modular data}
Let $\mathcal{C}$ be a braided fusion category with a braiding $c$. We use $\mathcal O(\mathcal C)$ to denote the set of isomorphism classes of simple objects in $\mathcal C$. Let $\mathcal{D}$ be a braided fusion subcategory of $\mathcal{C}$, the  \textit{M\"{u}ger centralizer} $\mathcal{D}_\mathcal{C}'$ of $\mathcal{D}$ in $\mathcal{C}$ is the fusion subcategory generated by objects $X$ of $\mathcal{C}$ such that $c_{Y,X}c_{X,Y}=\operatorname{id}_{X\otimes Y}$ for all $Y$ of $\mathcal{D}$ \cite{Muger2003}. The \textit{M\"{u}ger center} of $\mathcal{C}$ is the fusion subcategory $\mathcal C' =\mathcal{C}_\mathcal{C}'$. A braided fusion category $\mathcal{C}$ is \textit{symmetric} if $\mathcal{C}'=\mathcal{C}$. We say a symmetric fusion category $\mathcal{C}$ is Tannakian if $\mathcal{C}\cong \operatorname{Rep}(G)$ as braided fusion category, where $G$ is a finite group.

A ribbon fusion category $\mathcal C$  is called \textit{modular}, if its M\"{u}ger center is equivalent to $\operatorname{Vec}$, the category of finite-dimensional vector space over $\mathbb{C}$. Equivalently,  $\mathcal C$ is modular if the $S$-matrix is non-degenerate \cite{Muger2003}, where $S=\operatorname{Tr}(c_{Y,X^\ast}c_{X^\ast,Y})$, $X,Y\in\mathcal{O}(\mathcal{C})$. Let $T=(\delta_{X,Y}\theta_X)$, where $\theta_X$ is the scalar associated with the ribbon twist of $X\in\mathcal O(\mathcal C)$. 
The pair of $(S,T)$ is referred as the \textit{modular data}. Let $\mathfrak{s}, \mathfrak{t}\in\operatorname{SL}(2, \mathbb{Z})$ be the generators given by 
\begin{align*}\mathfrak{s}:=   \begin{pmatrix} 0 & -1\\1 &0\end{pmatrix},~ \mathfrak{t}:=   \begin{pmatrix} 1 & 1\\0 &1\end{pmatrix}
\end{align*}
with relations $\mathfrak{s}^4 =1$ and $(\mathfrak{s}\mathfrak{t})^3=\mathfrak{t}^2$.
For a modular tensor category $\mathcal{C}$, the assignment  \begin{align*}
    \mathfrak{s}\mapsto \frac{1}{\sqrt{\dim(\mathcal{C})}}S,\quad\mathfrak{t}\mapsto T
\end{align*}  gives a projective representation of $\operatorname{SL}(2, \mathbb{Z})$. 
Let $N$ be the order of the $T$ matrix. The kernel of this projective representation is a congruence subgroup of 
 $\operatorname{SL}(2,\mathbb Z)$ at level $N$. Consequently,  the image factors through the finite group $\operatorname{SL}(2,\mathbb Z/N\mathbb Z)$ \cite{ng2010congruence}. This projective representations can be lifted to linear representations, which  also have congruence kernels \cite{dong2015congruence}. The congruence kernel results have been used in proving key results such as rank finiteness of modular categories \cite{bruillard2016rank}, and in the classifications of modular categories \cite{BNRWbyrank2016, NRWWrank6, ng2023classification, Ngtransitive2022, plavnik2023modular}.

Let $\rho$ be an arbitrary irreducible finite-dimensional congruence representation of $\operatorname{SL}(2,\mathbb Z)$ of level $n$, where $n$ is a positive integer. By the Chinese Remainder Theorem,  $\rho$  factors through the finite groups
 \begin{align*}
\operatorname{SL}(2,\mathbb Z/n\mathbb Z)\cong\operatorname{SL}\big(2,\mathbb Z/p_1^{n_1}\mathbb Z\big)\times\cdots\times\operatorname{SL}\big(2,\mathbb Z/p_r^{n_r}\mathbb Z\big),
\end{align*} 
where $n=p_1^{n_1}\cdots p_r^{n_r}$, and $p_j$ are distinct primes, $1\leq j\leq r$. The finite-dimensional  irreducible representations of $\operatorname{SL}(2,\mathbb{Z}/p^m\mathbb{Z})$ are classified in \cite{Nobs1,Nobs2}.

Let $\mathcal C$ be a ribbon fusion category with $S$- and $T$- matrix. For any simple objects $X$ and $Y$, we have the balancing equation \cite{bakalov2001lectures}
$$\theta_X \theta_Y S_{X, Y}=\sum_{Z \in \mathcal O(\mathcal C)} N_{X^ \ast, Y}^Z \theta_Z d_Z,$$
where $N_{X^ \ast, Y}^Z   =\operatorname{dim} \operatorname{Hom}\left(X^\ast \otimes Y, Z\right)$ is the fusion coefficient and $d_Z$ is the quantum dimension of the simple object $Z$. If $\mathcal C$ is modular, the fusion coefficients can be expressed in terms of the $S$-matrix via the Verlinde formula (see, for example, \cite{bakalov2001lectures})
$$N_{X, Y}^Z=\frac{1}{\operatorname{dim}(\mathcal{C})} \sum_{W \in \mathcal O (\mathcal{C})} \frac{S_{X, W} S_{Y, W} S_{Z^\ast, W}}{S_{\mathbf 1, W}}  ,$$
where $\mathbf 1$ denotes the tensor unit of $\mathcal C$.

Let $\mathcal C$ be modular with modular data $(S,T)$. 
Denote by $\mathbb{Q}(S)$ the smallest field containing all entries of the $S$-matrix. 
It is known that $\mathbb{Q}(S)$ is an abelian Galois extension over $\mathbb{Q}$ \cite{dBG, CosteGannon}. 
Let $\mathbb{Q}_N:=\mathbb{Q}(\zeta_N)$, where $ \zeta_N $ is a primitive $ N$th root of unity. 
It is proved in \cite{ng2010congruence} that $\mathbb{Q}(S) \subset \mathbb{Q}(T)=\mathbb{Q}_N$, where $N$ is the order of the $T$ matrix.  
Let $\operatorname{Gal}(\mathcal C)=\operatorname{Gal}(\mathbb{Q}(S)/\mathbb{Q})$ denote the Galois group of the field extension $\mathbb{Q}(S)/\mathbb{Q}$.  
 As $$\frac{S_{X,Y} S_{X,Z}}{S_{\mathbf 1,X}^2}= \sum_{W\in \mathcal{O}(\mathcal C)}N_{YZ}^W\frac{S_{XW}}{S_{\mathbf 1,X}},$$ \cite[Lemma 2.4]{Muger2003} and $S$ is symmetric, the assignments $ \chi_Y:X\mapsto \frac{S_{X,Y}}{S_{\mathbf 1,Y}} $ define linear characters of the fusion ring  $ \mathcal{K}_0(\mathcal C) $.
Since $ S$  is non-degenerate, $ \{\chi_Y\}_{Y\in\mathcal O(\mathcal C)} $  forms all characters of $ \mathcal{K}_0(\mathcal C) $. 
For any $ \sigma\in \operatorname{Gal}(\mathcal C) $,  $ \sigma(\chi_Y) $  is also a linear character of $ \mathcal{K}_0(\mathcal C) $.  
Thus, there is a unique $ \hat{\sigma} \in \operatorname{Sym}(\mathcal{O}(\mathcal{C}))$ such that 

$$\sigma\left(\dfrac{S_{X,Y}}{S_{\mathbf 1,Y}}\right)=\dfrac{S_{X,\hat{\sigma}(Y)}}{S_{\mathbf 1,\hat{\sigma}(Y)}},$$
for all $ X, Y \in \mathcal{O}(\mathcal{C})$.  For a more in-depth discussion on the Galois action on modular data, see \cite{bruillard2016rank, dong2015congruence} and the sources cited therein.

 A fusion category  $\mathcal{C}$  is called \textit{pointed} if all of its simple objects are invertible. In this case, the set of isomorphism classes of simple objects  $\mathcal{O}(\mathcal{C})$  forms a finite group $G$.
 Given a braided pointed fusion category $\mathcal{C}$ with $\mathcal{O}(\mathcal{C})=G$,  the braiding  $c$  naturally determines a quadratic form  $q$  on  $G$, given by  $q(g) = c_{g,g}$. The pair $(G,q)$ is  called a \textit{pre-metric group}. In fact, there is a bijective correspondence between pre-metric groups and ribbon pointed fusion categories \cite{Drinfeld2010}. We denote by $\mathcal{C}(G,q)$ the ribbon pointed fusion category associated with the pre-metric group $(G,q)$. A pre-metric group $(G,q)$ is called a \textit{metric group} if the quadratic form $q$ is non-degenerate, that is, the bi-character   
 \begin{align*}\langle g,h\rangle:=\frac{q(g+h)}{q(g)q(h)},~ \forall g,h\in G
 \end{align*}
 determined by $q$ is non-degenerate. In this case, the corresponding  category is modular.

\subsection{Near-group categories}\label{subsectionNeargroup} 
 Let $G$ be a finite abelian group of order $n$, and let $m$ be a non-negative integer. A \textit{near-group category} of type $G+m$ \cite{Siehler2003} is a fusion category with simple objects labeled by elements $g\in G$ with an additional simple object  $\rho$  such that the fusion rules are generated by the multiplication in $G$, with \begin{align*}
  g\rho=\rho g=\rho,\, \forall g\in G,~ \rho^{\otimes2} = m\rho\oplus_{g\in G}g  . 
 \end{align*}As shown in \cite[Theorem 2]{EvansGannon} and \cite[Appendix A]{ostrik2015pivotal}, the only possible values of $m$ are $n$, $n-1$ or $m = kn$ for some nonnegative integer $k$. When $m=0$, the corresponding near-group categories  are  called the Tambara-Yamagami  categories, which are  classified in \cite{TY98}. For further developments in the classification of near-group categories, we refer the reader to \cite{Budthesis,ostrik2015pivotal,Schopieray2023} and the references therein.
  
 When $m=n$, the near-group categories $G+n$ are characterized by the following:
    
\begin{theorem} \cite[Theorem 5.3]{izumi2001structure}, \cite[Corollary 5]{EvansGannon}\label{thm:existng}
Let $G$ be a finite abelian group of order $n$, $\langle \cdot , \cdot\rangle$ a non-degenerate symmetric bicharacter on $G$ and define $d:=\dfrac{n+\sqrt{n^2+4n}}{2}$. Let  $c\in \mathbb T$, $a: G\to \mathbb T$, $b: G\to \mathbb C$ be such that 
\begin{equation}\label{eq:ng1}
     a(0)=1, \quad a(x)=a(-x), \quad a(x+y)\langle x, y\rangle=a(x)a(y),\quad \sum_{a\in G} a(x)=\sqrt{n}c^{-3},
\end{equation}

\begin{equation}\label{eq:ng2}
    b(0)=-\frac{1}{d}, \quad \sum_y \overline{\langle x, y\rangle} b(y)=\sqrt{n} c \overline{b(x)}, \quad a(x) b(-x)=\overline{b(x)},
\end{equation}
\begin{equation}\label{eq:ng3}
    \sum_x b(x + y) \overline{b(x)}=\delta_{y, 0}-\frac{1}{d}, \quad \sum_x b(x + y) b(x + z) \overline{b(x)}=\overline{\langle y, z\rangle} b(y) b(z)-\frac{c}{d \sqrt{n}}.
\end{equation}

Then the data $\langle \cdot ,\cdot \rangle$, $  c, a, b$ determine a near-group category of type $G+n$. Two such categories $\mathcal{C}_1$ and $\mathcal{C}_2$ determined by $\langle \cdot ,\cdot \rangle_1,c_1, a_1, b_1$ and $\langle \cdot ,\cdot \rangle_2,c_2, a_2, b_2$  are equivalent as fusion categories if and only if there is $\psi \in \operatorname{Aut}(G)$ such that $\langle x, y\rangle_1=\langle\psi x, \psi y\rangle_2, a_1(x)=a_2(\psi x)$, $b_1(x)=b_2(\psi x)$ and $c_1=c_2$.
\end{theorem}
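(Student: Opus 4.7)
The plan is to parametrize the associator of a near-group category of type $G+n$ in carefully chosen bases and extract the constraints (\ref{eq:ng1})--(\ref{eq:ng3}) from the pentagon axiom. First I would fix bases: the pointed subcategory on $G$ has trivial $3$-cocycle after gauge, generators can be chosen for each one-dimensional space $\text{Hom}(\rho, g\otimes\rho)$ and $\text{Hom}(g, \rho\otimes\rho)$, and an ordered basis $\{e_g\}_{g\in G}$ is picked for the $n$-dimensional multiplicity space $\text{Hom}(\rho, \rho\otimes\rho)$. The associator then reduces to a short list of $F$-symbols: $F^{\rho,g,\rho}$ packages both the bicharacter $\langle\cdot,\cdot\rangle$ and the quadratic-type function $a$, the mixed symbols $F^{\rho,\rho,g}$ and $F^{g,\rho,\rho}$ carry the phase $c$, and the crucial symbol $F^{\rho,\rho,\rho}$ has matrix entries packaged by $b$.

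Next I would impose the pentagon axiom case by case, organized by the number of $\rho$'s. Pentagons with one $\rho$ and three $G$-letters reduce to the cocycle condition on $G$ and are handled by gauging. Pentagons with two $\rho$'s and two $G$-letters produce the bicharacter structure of $\langle\cdot,\cdot\rangle$ together with $a(x+y)\langle x,y\rangle = a(x)a(y)$ and the symmetries $a(0)=1$, $a(x)=a(-x)$ from (\ref{eq:ng1}); non-degeneracy and symmetry of $\langle\cdot,\cdot\rangle$ are compatible with the fusion rule $\rho\otimes\rho\supset\bigoplus_g g$ with multiplicity one. Pentagons with three $\rho$'s and one $G$-letter yield $a(x)b(-x)=\overline{b(x)}$ and, after Fourier transform against $\langle\cdot,\cdot\rangle$, the eigenrelation $\sum_y\overline{\langle x,y\rangle}b(y)=\sqrt{n}c\overline{b(x)}$; the sum formula $\sum_x a(x) = \sqrt{n}c^{-3}$ then emerges from consistency of these eigenrelations with the quadratic function $a$. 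The decisive pentagon involves $\rho^{\otimes 4}$: expanding both sides in the multiplicity basis produces the orthogonality and triple-product identities in (\ref{eq:ng3}), and the constant $d=\tfrac{n+\sqrt{n^2+4n}}{2}$ appears as the positive root of $d^2=nd+n$, the Frobenius-Perron dimension of $\rho$.

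For the converse direction I would verify that data $(\langle\cdot,\cdot\rangle,c,a,b)$ satisfying (\ref{eq:ng1})--(\ref{eq:ng3}) define genuine $F$-symbols satisfying every pentagon, with any pentagon not directly listed following by Fourier manipulation over $G$ and non-degeneracy of $\langle\cdot,\cdot\rangle$. For the equivalence statement, a tensor equivalence is specified by rescalings of the chosen basis vectors; after normalizing so that $a$ and $b$ take their uniquely determined scalar form, the only remaining freedom is a permutation of $G$ respecting the fusion rules, which must be an automorphism $\psi\in\text{Aut}(G)$, giving the stated criterion $\langle x,y\rangle_1=\langle\psi x,\psi y\rangle_2$, $a_1(x)=a_2(\psi x)$, $b_1(x)=b_2(\psi x)$, $c_1=c_2$.

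The hard part is the $\rho^{\otimes 4}$ pentagon: it is a cubic system in the $n^2$ entries of $F^{\rho,\rho,\rho}_\rho$, and extracting the clean form (\ref{eq:ng3}) requires simultaneously diagonalizing via Fourier transform over $G$ using non-degeneracy of $\langle\cdot,\cdot\rangle$, and choosing the normalization of $\{e_g\}$ so that the matrix entries become $b(g-h)$. Pinning down the normalization that also makes $a(x)b(-x)=\overline{b(x)}$ hold, thereby reducing all multiplicity-space data to the single scalar function $b$, is the key technical step and the reason the final list of constraints is so compact.
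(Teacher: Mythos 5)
This statement is imported into the paper as a black box: the authors give no proof, only the citations to Izumi and to Evans--Gannon, so there is no in-paper argument to compare against. What you propose is a purely categorical route (parametrize the $F$-symbols, impose the pentagon axiom, read off the constraints), whereas the cited proofs are operator-algebraic: $\rho$ is realized as an endomorphism of the Cuntz algebra $\mathcal{O}_{n+1}$, the group acts by automorphisms, and Equations (\ref{eq:ng1})--(\ref{eq:ng3}) arise as the conditions on the isometries implementing $\rho^{2}\cong\bigoplus_{g}g\oplus n\rho$; the equivalence criterion then comes from classifying the intertwiners between two such endomorphism systems. Your route is legitimate in principle (it is the strategy used for Tambara--Yamagami categories), and it would buy a proof that does not presuppose a unitary/C$^{*}$ structure; the operator-algebraic route buys the unitarity constraints $a,c\in\mathbb{T}$, $|b(x)|=n^{-1/2}$ for $x\neq 0$ essentially for free, which in your framework have to be extracted separately from the pentagon system together with a positivity or gauge-fixing argument you do not supply.

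As a proof, however, the proposal has genuine gaps rather than just omitted routine detail. None of the equations is actually derived: each block of pentagons is asserted to ``produce'' or ``yield'' the corresponding constraint, and the most delicate one, $\sum_{x}a(x)=\sqrt{n}\,c^{-3}$, is attributed to ``consistency of the eigenrelations with $a$'' --- but this is a Gauss-sum identity pinning $c$ to the quadratic form $a$, and it is exactly the kind of statement that must be computed, not inferred. The trivializability of the $3$-cocycle on the pointed part is asserted without argument; it is true for type $G+|G|$ but needs proof. Most seriously, the converse direction --- that a solution of (\ref{eq:ng1})--(\ref{eq:ng3}) satisfies \emph{every} pentagon, including the $\rho^{\otimes 4}$ one --- is the entire content of the existence half of the theorem, and dismissing the remaining pentagons as ``following by Fourier manipulation'' is not a proof. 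The equivalence criterion also needs more care: you must show the gauge freedom on the multiplicity space $\operatorname{Hom}(\rho,\rho\otimes\rho)$ is completely fixed by the chosen normalization of $a$ and $b$, so that the residual symmetry really is only $\operatorname{Aut}(G)$; as written this is asserted rather than established.
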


 The classification of near-group categories of type $G+n$ is complete for $n\leq 13$ \cite{EvansGannon, izumi2001structure}, where $n$ is the order of $G$. This classification has been further extended in \cite{Budthesis} for cyclic groups of order less than $29$.

Once the existence of  a near-group category of type $G+n$ is established, the modular data for the Drinfeld center of such a category is given as follows \cite{izumi2001structure}.  Specifically, we need to find all functions $\xi: G \rightarrow \mathbb{T}$ and values $\tau \in G, \omega \in \mathbb{T}$ which satisfy 
\begin{equation}\label{eq:half1}
    \sum_g \xi(g)=\sqrt{n} \omega^2 a(\tau) c^3-n d^{-1},
\end{equation}
\begin{equation}\label{eq:half2}
    \bar{c} \sum_k b(g+k) \xi(k)=\omega^2 c^3 a(\tau) \overline{\xi(g+\tau)}-\sqrt{n} d^{-1},
\end{equation}
\begin{equation}\label{eq:half3}
    \xi(\tau-g)=\omega c^4 a(g) a(\tau-g) \overline{\xi(g)},
\end{equation}
\begin{equation}\label{eq:half4}
    \sum_k \xi(k) b(k-g) b(k-h)=c^{-2} b(g+h-\tau) \xi(g) \xi(h) \overline{a(g-h)}-c^2 d^{-1}.
\end{equation}

There are $n(n+3)/2$ triples $(\xi_i, \tau_i, \omega_i)$ that satisfy the above equations.
The corresponding center has rank $n(n+3)$ with the following 4 subsets of simple objects:
\begin{enumerate}
    \item $\mathfrak{a}_g, g \in G$, $\dim(\mathfrak{a}_g)=1$;
    \item $\mathfrak{b}_h, h \in G$, $\dim(\mathfrak{b}_h)=d+1$;
    \item $\mathfrak{c}_{l, k}=\mathfrak{c}_{k, l}, l, k \in G, l \neq k$, $\dim(\mathfrak{c}_{k,l})=d+2$
    \item $\mathfrak{d}_j$, where $j$ corresponds to a triple $\left(\xi_j, \tau_j, \omega_j\right)$, $\dim(\mathfrak{d}_j)=d$.
\end{enumerate}
The $T$  and $S$ matrices are given by the following block form
\begin{equation}\label{Tmatrix-center}
    T=\operatorname{diag}\left(\langle g, g\rangle,\langle h, h\rangle,\langle k, l\rangle, \omega_j\right)
\end{equation}

\begin{equation}\label{Smatrix-center}
    S=\\ \left(\begin{array}{cccc}\left\langle g, g^{\prime}\right\rangle^{-2} & ( d+1)\left\langle g, h^{\prime}\right\rangle^{-2} & ( d+2) \overline{\left\langle g, k^{\prime}+l^{\prime}\right\rangle} &  d\left\langle g, \tau_{j^{\prime}}\right\rangle \\( d+1)\left\langle h, g^{\prime}\right\rangle^{-2} & \left\langle h, h^{\prime}\right\rangle^{-2} & ( d+2) \overline{\left\langle h, k^{\prime}+l^{\prime}\right\rangle} & - d\left\langle h, \tau_{j^{\prime}}\right\rangle \\( d+2) \overline{\left\langle k+l, g^{\prime}\right\rangle} & ( d+2) \overline{\left\langle k+l, h^{\prime}\right\rangle} & S_{(k, l),\left(k^{\prime}, l^{\prime}\right)} & \mathbf 0 \\ d\left\langle\tau_j, g^{\prime}\right\rangle & - d\left\langle\tau_j, h^{\prime}\right\rangle &\mathbf 0 & S_{j, j^{\prime}}\end{array}\right),
\end{equation}
where 
\begin{equation}\label{Smatrix-S44}
    \begin{aligned} & S_{j, j^{\prime}}=\omega_j \omega_{j^{\prime}} \sum_{g \in G}\left\langle\tau_j+\tau_{j^{\prime}}+g, g\right\rangle \\ & + d \omega_j \omega_{j^{\prime}}c^6 a\left(\tau_j\right) a\left(\tau_{j^{\prime}}\right) n^{-1} \sum_{g, h \in G} \overline{\xi_j(g) \xi_{j^{\prime}}(h)\left\langle\tau_{j}-\tau_{j^\prime}+h-g, h-g\right\rangle}, \end{aligned}
\end{equation}
and 
\begin{equation}\label{Smatrix-S33}
    S_{(k, l),\left(k^{\prime}, l^{\prime}\right)}=(d+2)\left(\overline{\left\langle k, k^{\prime}\right\rangle\left\langle l, l^{\prime}\right\rangle}+\overline{\left\langle k, l^{\prime}\right\rangle\left\langle l, k^{\prime}\right\rangle}\right).
\end{equation}

\subsection{Condensation of modular categories}
Let $\mathcal{C}$ be a modular  tensor category. Assume that $\mathcal{C}$ contains  a Tannakian fusion subcategory $\text{Rep}(G)$, then the de-equivariantization  of $\mathcal{C}$ by $\operatorname{Rep}(G)$, denoted by $\mathcal{C}_G$, is a $G$-crossed braided fusion category with a faithful $G$-grading, see \cite[section 4.4]{Drinfeld2010}. Moreover, the trivial component $\C_G^0$  with respect to the associated $G$-grading is also a modular category \cite[Proposition 4.56]{Drinfeld2010}, which is called the condensation of $\C$ by $\text{Rep}(G)$ (see \cite{Drinfeld2010, Bruguieres2000, KirillovOstrik2001} for further details). $\mathcal{C}_G^0$   is also  braided tensor equivalent to the de-equivariantization of centralizer of $\operatorname{Rep}(G)$ in $\mathcal{C}$ by $\operatorname{Rep}(G)$. The dimension of  $\mathcal{C}_G^0$ is given by $\dim(\mathcal{C}_G^0)=\frac{\dim(\mathcal{C})}{|G|^2}$ \cite{Drinfeld2010}. 
In addition, let $A=\text{Fun}(G)\in \operatorname{Rep}(G)$ be the regular algebra of $\operatorname{Rep}(G)$, then it is a connected \'{e}tale algebra in sense of \cite{DMNO,KirillovOstrik2001}, then  the category of local  modules $\mathcal{C}_A^0$ over  the \'{e}tale algebra  in $\mathcal{C}$ is exactly the modular category $\mathcal{C}_G^0$. 
    
    Given a fusion category $\mathcal{C}$, let $\chi$ be    an irreducible character of $\mathcal{K}_0(\mathcal{C})$. Then there is central element $c_\chi$ in $\mathcal{K}_0(\mathcal{C})\otimes_\mathbb{Z}\mathbb{C}$, which acts on $\chi$ as a scalar $f_\chi$, and  $f_\chi$  is called a formal codegree determined by $\chi$, see \cite{ostrik2015pivotal}.
\begin{proposition}\label{equivariantization}
    Let $\mathcal{C}$ be a near-group category of type $G+n$, where $G$ is abelian. If $\mathcal{Z}(\mathcal{C})$ contains a non-trivial Tannakian subcategory $\operatorname{Rep}(H)$ with $H$ being abelian, then $\mathcal{C}$ is the equivariantization of a  fusion category $\mathcal{D}$. Moreover, $\mathcal{Z}(\mathcal{C})_H^0\cong\mathcal{Z}(\mathcal{D})$.
\end{proposition}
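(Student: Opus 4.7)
The plan is to apply two standard results from the theory of fusion categories: first, the correspondence between Tannakian subcategories of the Drinfeld center and equivariantization presentations, which yields $\mathcal{C} \cong \mathcal{D}^H$; and second, the compatibility between the center of an equivariantization and the de-equivariantization of the center, which yields $\mathcal{Z}(\mathcal{C})_H^0 \cong \mathcal{Z}(\mathcal{D})$.

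Concretely, I would take $A := \operatorname{Fun}(H) \in \operatorname{Rep}(H) \subset \mathcal{Z}(\mathcal{C})$, the regular algebra, which is a connected \'{e}tale algebra in $\mathcal{Z}(\mathcal{C})$. Pulled back through the forgetful functor $F : \mathcal{Z}(\mathcal{C}) \to \mathcal{C}$, $A$ becomes a central algebra in $\mathcal{C}$, and I would set $\mathcal{D} := \operatorname{Mod}_{\mathcal{C}}(A)$, the category of right $A$-modules in $\mathcal{C}$. Standard results (cf.\ \cite{Drinfeld2010}) endow $\mathcal{D}$ with the structure of a fusion category carrying a canonical $H$-action such that $\mathcal{D}^H \cong \mathcal{C}$, which establishes the first assertion. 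The abelianness of $H$ enters only to ensure that $\operatorname{Rep}(H)$ is a pointed Tannakian category, simplifying the identification of the relevant subcategories in $\mathcal{Z}(\mathcal{C})$.

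For the ``Moreover'' claim, I would invoke the standard formula for centers of equivariantizations: given the presentation $\mathcal{C} = \mathcal{D}^H$, the center $\mathcal{Z}(\mathcal{C})$ contains a canonical Tannakian copy of $\operatorname{Rep}(H)$ (induced by the action), and the trivial component of its de-equivariantization, $\mathcal{Z}(\mathcal{C})_H^0$, is braided equivalent to $\mathcal{Z}(\mathcal{D})$. The only item that needs checking is that the canonical $\operatorname{Rep}(H) \subset \mathcal{Z}(\mathcal{C})$ arising from the constructed equivariantization coincides with the Tannakian subcategory prescribed in the hypothesis; this follows from the naturality and bijectivity of the correspondence between Tannakian subcategories of $\mathcal{Z}(\mathcal{C})$ and equivariantization presentations of $\mathcal{C}$. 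The potential main obstacle is precisely this matching step, but it is automatic once both constructions are set up properly, so no substantial new argument is required. The near-group structure of $\mathcal{C}$ plays no active role in the proof itself; it is present only to fix the context in which the proposition is applied in Section \ref{section3}.
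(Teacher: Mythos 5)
There is a genuine gap. Your argument treats the correspondence between Tannakian subcategories of $\mathcal{Z}(\mathcal{C})$ and equivariantization presentations of $\mathcal{C}$ as an automatic bijection, but it is not: a Tannakian subcategory $\operatorname{Rep}(H)\subset\mathcal{Z}(\mathcal{C})$ yields a presentation $\mathcal{C}\cong\mathcal{D}^H$ only if the composition $\operatorname{Rep}(H)\hookrightarrow\mathcal{Z}(\mathcal{C})\xrightarrow{F}\mathcal{C}$ with the forgetful functor is fully faithful. If it is not, your algebra $A=F(\operatorname{Fun}(H))$ fails to be connected in $\mathcal{C}$ (it contains $\mathbf{1}$ with multiplicity greater than one), $\operatorname{Mod}_{\mathcal{C}}(A)$ is only a multifusion category, and no equivariantization presentation results --- in that situation the Tannakian subcategory instead corresponds to a faithful grading of $\mathcal{C}$. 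Since $H$ is abelian, $\operatorname{Rep}(H)$ is pointed, and full faithfulness of $F|_{\operatorname{Rep}(H)}$ is equivalent to the statement that no nontrivial invertible object of $\operatorname{Rep}(H)$ is a direct summand of $I(\mathbf{1})$, where $I$ is the adjoint of $F$. This is exactly the step your proposal labels as ``automatic once both constructions are set up properly,'' and it is the only nontrivial content of the proposition.

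Moreover, your closing remark that the near-group structure of $\mathcal{C}$ ``plays no active role'' is precisely backwards: the paper verifies the full-faithfulness condition by computing the dimensions $\dim(\mathcal{C})/f_\chi$ of the simple summands of $I(\mathbf{1})$ via the formal codegrees of the near-group fusion ring (\cite[Theorem 2.13 and Proposition A.5]{ostrik2015pivotal}), concluding that the only invertible summand of $I(\mathbf{1})$ is the unit object. That computation is specific to near-group categories and is the essential input; once it is in place, the rest of your argument (passing to $\operatorname{Mod}_{\mathcal{C}}(A)$ and invoking \cite[Proposition 2.10]{ENOweaklygrouptheoretical} for $\mathcal{Z}(\mathcal{C})_H^0\cong\mathcal{Z}(\mathcal{D})$) does go through and matches the paper.
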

\begin{proof}
Let $F:\mathcal{Z}(\mathcal{C})\to\mathcal{C}$ be the forgetful functor, and $I$ be the adjoint functor to $F$.    Then the dimensions of simple direct summands of $I(\mathbf{1})$ are  $\frac{\dim(\mathcal{C})}{f_\chi}$ by \cite[Theorem 2.13]{ostrik2015pivotal}, where $f_\chi$ are formal codegrees of $\mathcal{C}$  determined by an irreducible character of the Grothendieck ring $\mathcal{K}_0(\mathcal{C})$. It follows from \cite[Proposition A.5]{ostrik2015pivotal} that the only invertible simple object of $I(\mathbf{1})$ is the unit object, as $F$ is a tensor functor,  hence the Tannakian category $\operatorname{Rep}(H)$ is embedded into  $\mathcal{C}$ under the forgetful functor $F$. Then $\mathcal{C}$ is the equivariantization of fusion category $\mathcal{D}$ and $\mathcal{Z}(\mathcal{C})_H^0\cong\mathcal{Z}(\mathcal{D})$  as braided fusion category 
 by \cite[Proposition 2.10]{ENOweaklygrouptheoretical}. 
\end{proof}
The following example may be known to some experts, see \cite[Table 8]{rowell2025neargroup} for example, we include it here.
\begin{example}Let $\mathcal{C}$ be a near-group fusion category of type $\mathbb{Z}/4\mathbb{Z}+4$, then there exists a modular tensor equivalence  $\mathcal{Z}(\mathcal {C})_{\mathbb{Z}/2\mathbb{Z}}^0\cong\mathcal{C}(\mathbb{Z}/2\mathbb{Z},q)\boxtimes\mathcal{C}(\mathfrak{sl}_3,5)_{\operatorname{ad}}$, where
$\mathcal{C}(\mathfrak{sl}_3,5)_{\operatorname{ad}}$ is the adjoint subcategory of $\mathcal{C}(\mathfrak{sl}_3,5)$.
 \end{example}
Notice that the simple objects of $\mathcal{C}(\mathfrak{sl}_3,5)_{\operatorname{ad}}$ are labeled  by \begin{align*}
 \{(0,0),(0,3),(3,0),(1,1),(2,2),(1,4),(4,1)\}.
 \end{align*}
Due to conformal embedding $A_{2,5}\subseteq A_{5,1}$ \cite{DMNO},
it is well-known that $\mathcal{C}(\mathfrak{sl}_3,5)_{\operatorname{ad}}$ contains a connected \'{e}tale algebra $A=(0,0)\oplus(2,2)$ such that $\mathcal{C}(\Z/2\mathbb{Z},q)=(\mathcal{C}(\mathfrak{sl}_3,5)_{\operatorname{ad}})_A^0$, 
hence,  $\mathcal{C}(\Z/2\mathbb{Z},q)^{\operatorname{rev}}\boxtimes\mathcal{C}(\mathfrak{sl}_3,5)_{\operatorname{ad}}\cong\mathcal{Z}(\mathcal{B})$ for  fusion category $\mathcal{B}:=(\mathcal{C}(\mathfrak{sl}_3,5)_{\operatorname{ad}})_A$. Then $\mathcal{B}$ is a fusion category of rank $4$ and fusion rules are
 \begin{align*}
 X\otimes X^*=\mathbf{1}\oplus X\oplus X^*, g\otimes X=X^*=X\otimes g, \\
 g\otimes g=\mathbf{1},X\otimes X=X^*\otimes X^*=g\oplus X\oplus X^*.
 \end{align*}
 Indeed, let $X=F(\mathbf{1}\boxtimes(1,4))$, where $F:\mathcal{Z}(\mathcal{B})\to\B$ is the forgetful functor, then $X$ is a simple object of Frobenius-Perron dimension $1+\sqrt{2}$, hence
 \begin{align*}
 X\otimes X&=F(\mathbf{1}\boxtimes(1,4)\otimes \mathbf{1}\boxtimes(1,4))\\
 &=F(\mathbf{1}\boxtimes(3,0)\oplus \mathbf{1}\boxtimes(4,1))\\
 &=F(\mathbf{1}\boxtimes(3,0))\oplus F(\mathbf{1}\boxtimes(4,1)).
 \end{align*}
 Since $\FPdim((3,0))=2+\sqrt{2}$,  it is not simple by counting the Frobenius-Perron dimension of $\B$. Hence, $F(\mathbf{1}\boxtimes(3,0))$ must contains an invertible object $g$ as a direct summand. However, $\theta_{(3,0)}=\zeta_4^3$, $\mathbf{1}\boxtimes(3,0)\nsubseteq I(\mathbf{1})$, where $I$ is the adjoint functor to $F$, so $g\neq \mathbf{1}$. Thus $g\subseteq F(\mathbf{1}\boxtimes(3,0))$ and $X^*=F(\mathbf{1}\boxtimes(4,1))\subseteq X\otimes X^*$, which implies
 \begin{align*}
 X\otimes X=g\oplus X\oplus X^*=X^*\otimes X^*,
 \end{align*}
 then the rest fusion rules are easy. And it was proved that the $\mathbb{Z}/2\mathbb{Z}$-equivariantization of $\mathcal{B}$ is a near-group fusion category of type $\mathbb{Z}/4\mathbb{Z}+4$ \cite{Izumicuntzalgebra}.

\section{Realization of modular data}\label{section3}
In \cite{ng2023classification}, the following modular data of rank $10$ labeled as $10_{0,1435}^{20,676}$ was constructed: 
\small{\begin{equation}\label{eq:MDrank10}
  \begin{aligned}
      S&=\begin{pmatrix}
1 & \chi_{20}^5 & \chi_{20}^5 & \chi_{20}^5 & \chi_{20}^5 & \chi_{20}^5 & \chi_{20}^5 & 4  \chi_5^2  & 4  \chi_5^2  & \chi_{80}^9 \\
\chi_{20}^5 & 3\chi_{20}^5 & -\chi_{20}^5 & -\chi_{20}^5 & -\chi_{20}^5 & -\chi_{20}^5 & -\chi_{20}^5 & 0 & 0 & \chi_{20}^5 \\
\chi_{20}^5 & -\chi_{20}^5 & 3\chi_{20}^5 & -\chi_{20}^5 & -\chi_{20}^5 & -\chi_{20}^5 & -\chi_{20}^5 & 0 & 0 & \chi_{20}^5 \\
\chi_{20}^5 & -\chi_{20}^5 & -\chi_{20}^5 & s  \chi_{20}^5  & \bar{s}  \chi_{20}^5  & \chi_{20}^5 & \chi_{20}^5 & 0 & 0 & \chi_{20}^5 \\
\chi_{20}^5 & -\chi_{20}^5 & -\chi_{20}^5 & \bar{s}  \chi_{20}^5  & s  \chi_{20}^5  & \chi_{20}^5 & \chi_{20}^5 & 0 & 0 & \chi_{20}^5 \\
\chi_{20}^5 & -\chi_{20}^5 & -\chi_{20}^5 & \chi_{20}^5 & \chi_{20}^5 & \bar{s}  \chi_{20}^5  & s  \chi_{20}^5  & 0 & 0 & \chi_{20}^5 \\
\chi_{20}^5 & -\chi_{20}^5 & -\chi_{20}^5 & \chi_{20}^5 & \chi_{20}^5 & s  \chi_{20}^5  & \bar{s}  \chi_{20}^5  & 0 & 0 & \chi_{20}^5 \\
4  \chi_5^2  & 0 & 0 & 0 & 0 & 0 & 0 & -2  \chi_{5}^3  &  \chi_{180}^{14} & -4  \chi_5^2  \\
4  \chi_5^2  & 0 & 0 & 0 & 0 & 0 & 0 &  \chi_{180}^{14} & -2  \chi_{5}^3  & -4  \chi_5^2  \\
\chi_{80}^9 & \chi_{20}^5 & \chi_{20}^5 & \chi_{20}^5 & \chi_{20}^5 & \chi_{20}^5 & \chi_{20}^5 & -4  \chi_5^2  & -4  \chi_5^2  & 1 \\
\end{pmatrix}, \\
T&=\operatorname{diag}\left( 1,1,1,i,i,-i,-i,\zeta_5^2,\zeta_5^3,1\right),
  \end{aligned}  
\end{equation}}\normalsize{where $\chi_n^m=m+\sqrt{n}$ and  $s=-1+2i$.} It was conjectured in \cite{ng2023classification} that they can be realized from the condensation reductions of the Drinfeld center of the near-group category of type $\mathbb Z/4\mathbb Z\times \mathbb Z/4\mathbb Z+16$, with the symmetric bi-character $\langle(g_1, g_2),(h_1, h_2)\rangle=\left(\zeta_4\right)^{g_1 h_1- g_2h_2}$, $(g_1,g_2), (h_1,h_2)\in \mathbb Z/4\mathbb Z\times \mathbb Z/4\mathbb Z$.

In this section, we  prove this conjecture in  Theorem \ref{case(3,3,1,1,1,1)}. Additionally, we obtain the modular data for the condensation of the Drinfeld center of a near-group category of type $\mathbb Z/8\mathbb Z+8$ in Theorem \ref{ModdatacondeZ8}. This modular data is a Galois conjugate to that of the  modular  category $\mathcal{C}(\mathfrak{g}_2,4)$ (see Corollary \ref{realcondZ8}), confirming a conjecture from   \cite[Table 8]{rowell2025neargroup}. 
\subsection{Near-group categories $\mathbb Z/4\mathbb Z\times \mathbb Z/4\mathbb Z+16$}\label{sec:Z4Z4}
\subsubsection{Existence of $\mathbb{Z}/4 \mathbb{Z}\times\mathbb{Z}/4 \mathbb{Z} + 16$}
Let \( \langle  \cdot , \cdot \rangle \) be the non-degenerate symmetric bi-character on \( \mathbb Z/4\mathbb Z\times \mathbb Z/4\mathbb Z \) defined by:
\begin{equation}\label{eq:bicharacter}
 \begin{gathered}
\langle\cdot, \cdot\rangle: \mathbb Z/4\mathbb Z\times \mathbb Z/4\mathbb Z \times \mathbb Z/4\mathbb Z\times \mathbb Z/4\mathbb Z \rightarrow \mathbb{T} \\
\left\langle\left(g_1, g_2\right),\left(h_1, h_2\right)\right\rangle=\left(\zeta_4\right)^{g_1 h_1-g_2 h_2}
\end{gathered}
\end{equation}
One solution associated with $\langle\cdot, \cdot\rangle$ by solving the equations in Theorem \ref{thm:existng} is 
\small{\begin{equation}\label{eq:Z4Z4sol}
\begin{aligned}
a(g_1,g_2) &= \zeta_8^{3(g_1^2 - g_2^2)}, \quad (g_1,g_2) \in \mathbb{Z}/4\mathbb{Z} \times \mathbb{Z}/4\mathbb{Z}, \quad c = 1, \quad
b(0,0) = \frac{1}{4}(2 - \sqrt{5}), \quad\\
b(1,2) &= \overline{b(2,1)}, \quad b(2,0) = \overline{b(0,2)}, \quad b(1,3) = \frac{1}{8} \left( 1 - \sqrt{5} + i \sqrt{2(-1 + \sqrt{5})} \right), \\
b(1,0) &= \overline{b(0,1)}, \quad b(1,1) = b(2,2) = \frac{1}{4}, \quad b(0,2) = -\frac{i}{4},  \quad\\b(0,1) &= \frac{1}{32} \left( (2 - 4i) + (1 - 3i) \sqrt{2} - 2\sqrt{5} + (1 + i)\sqrt{10} \right. \\
& \quad \left. - 2(-1)^{1/8} 2^{1/4}(2 + \sqrt{2}) \sqrt{(-1 + \sqrt{2})(-1 + \sqrt{5})} \right), \\
b(2,1) &= \frac{1}{32} \left( (2 - 4i) - (1 - 3i) \sqrt{2} - 2\sqrt{5} - 2(-1)^{1/4}\sqrt{5} \right. \\
& \quad \left. - 2i \sqrt{-1 + \sqrt{5}} + 2^{3/4} \sqrt{(-4 + 3\sqrt{2})(-1 + \sqrt{5})} \right).
\end{aligned}
\end{equation}
}


\normalsize
 \noindent The rest values of $b$ on $\mathbb Z/4\mathbb Z\times \mathbb Z/4\mathbb Z$ can be computed via the formula $b(-x)=\overline{a(x) b(x)}$ from Equation (\ref{eq:ng2}).  Up to equivalence of fusion categories, the above solution is the unique one corresponds to the bicharater in Equation (\ref{eq:bicharacter}):
\begin{proposition}
\label{prop:existenceZ4*Z4}
    Let  $\langle\cdot, \cdot\rangle$ be the non-degenerate symmetric bicharcter on $\mathbb Z/4\times \mathbb Z/4$ defined in Equation (\ref{eq:bicharacter}). Then, up to equivalence of fusion categories, there exists a unique near-group category of type  $\mathbb{Z}/4 \times \mathbb{Z}/4 + 16 $ corresponding to this bicharacter. 
\end{proposition}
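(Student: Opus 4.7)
The strategy is a direct application of Izumi's classification theorem (Theorem~\ref{thm:existng}): for a fixed non-degenerate symmetric bicharacter on $G = \mathbb{Z}/4\mathbb{Z} \times \mathbb{Z}/4\mathbb{Z}$, producing a near-group category of type $G+16$ amounts to producing a compatible triple $(c, a, b)$ satisfying (\ref{eq:ng1})--(\ref{eq:ng3}), with equivalence classes parametrized by the orbits of the action of $\operatorname{Aut}(G,\langle\cdot,\cdot\rangle)$. The proof therefore splits into exhibiting a solution and showing every solution lies in the same orbit as (\ref{eq:Z4Z4sol}).

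\textbf{Existence.} First I would verify that $a(g_1,g_2) = \zeta_8^{3(g_1^2 - g_2^2)}$ is a quadratic refinement of $\langle\cdot,\cdot\rangle$: the identities $a(0)=1$ and $a(-x)=a(x)$ are immediate, and expanding $(g_1+h_1)^2-(g_2+h_2)^2$ yields $a(x+y)\langle x,y\rangle = a(x)a(y)$. The Gauss sum factors as $\sum_{x\in G}a(x) = \bigl(\sum_{g_1}\zeta_8^{3g_1^2}\bigr)\bigl(\sum_{g_2}\zeta_8^{-3g_2^2}\bigr) = (2\zeta_8^3)(2\zeta_8^{-3}) = 4 = \sqrt{16}$, so the last identity in (\ref{eq:ng1}) forces $c^3=1$ and the choice $c=1$ is admissible. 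The remaining step is verifying (\ref{eq:ng2}) and (\ref{eq:ng3}) for the explicit $b$ of (\ref{eq:Z4Z4sol}). The involution $b(-x)=\overline{a(x)b(x)}$ from (\ref{eq:ng2}) extends the tabulated values to all sixteen entries, after which the scalar, Fourier-type, and cubic identities reduce to finitely many polynomial identities over a number field containing $\zeta_8$, $\sqrt{5}$, and nested radicals such as $\sqrt{-1+\sqrt{5}}$; each can be checked symbolically in a computer algebra system.

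\textbf{Uniqueness.} For a fixed bicharacter, the quadratic refinements $a$ satisfying $a(-x)=a(x)$ form a torsor over the $2$-torsion of $\widehat{G}$, giving a finite list of candidates, and for each candidate the last relation of (\ref{eq:ng1}) pins $c$ down to a cube root of unity. The remaining system (\ref{eq:ng2})--(\ref{eq:ng3}) becomes an explicit polynomial system in the sixteen complex unknowns $b(x)$; after using the involution to halve the number of unknowns and diagonalising the eigenvector condition $\sum_y \overline{\langle x,y\rangle}\,b(y) = \sqrt{n}\,c\,\overline{b(x)}$, a Gröbner-basis computation (performed separately for each candidate $(a,c)$) cuts the solution set down to finitely many points. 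Matching those points against the $\operatorname{Aut}(G,\langle\cdot,\cdot\rangle)$-orbit of (\ref{eq:Z4Z4sol}) then yields the single equivalence class.

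\textbf{Main obstacle.} The principal difficulty is computational rather than conceptual: both existence and uniqueness hinge on handling a cubic polynomial system in $b$ over a number field containing several incommensurable radicals, and the Gröbner-basis step can blow up dramatically if the ambient field and monomial order are chosen naively. Exploiting the involutive symmetry $b(-x)=\overline{a(x)b(x)}$, the Fourier eigenvector relation, and $\operatorname{Aut}(G,\langle\cdot,\cdot\rangle)$-equivariance before entering the elimination routine is essential; this is presumably the reason the general classification literature \cite{EvansGannon, izumi2001structure, Budthesis} has so far not reached the case $n=16$.
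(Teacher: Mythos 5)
Your proposal is correct and follows essentially the same route as the paper: enumerate the finitely many quadratic refinements $a$ of the bicharacter (the paper finds the same four), use the Gauss sum to pin down $c$, solve the resulting system for $b$ computationally, and then identify all resulting solutions under $\operatorname{Aut}(G,\langle\cdot,\cdot\rangle)$ via the equivalence criterion of Theorem \ref{thm:existng}. The paper carries out exactly this plan, finding four solutions for $b$ (Table \ref{table:solforb}) and exhibiting explicit automorphisms $\psi_1,\psi_2,\psi_3$ that relate them to the solution in Equation (\ref{eq:Z4Z4sol}).
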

\begin{proof}
    Solving Equation (\ref{eq:ng1}) in Theorem \ref{thm:existng} with respect to the bicharacter in Equation (\ref{eq:bicharacter}),  we found $4$ solutions for the quadratic forms: 
    $$
a_1\left(g_1, g_2\right)=\zeta_8^{3\left(g_1^2-g_2^2\right)}, \quad a_2\left(g_1, g_2\right)=\zeta_8^{3 g_1^2+g_2^2}, \quad a_3\left(g_1, g_2\right)=\zeta_8^{-g_1^2+g_2^2}, \quad a_4\left(g_1, g_2\right)=\zeta_8^{-g_1^2-3 g_2^2} .
$$
Only the quadratic form $a_1$ leads to further  solutions for all the equations  in Theorem \ref{thm:existng}. Furthermore, the equation $\sum_{a \in G} a(x)=\sqrt{n} c^{-3}$ in Equation (\ref{eq:ng1}) implies $c^{-3}=1$. This yields four possible solutions for  b, all corresponding to  $c = 1$.  These solutions are listed in Table \ref{table:solforb}. By Equations (\ref{eq:ng2}) and (\ref{eq:ng3}), we have \begin{align*}
b(0,0) = \dfrac{1}{4}(2-\sqrt{5}),\quad b(-x)=\overline{a(x) b(x)},\quad b(x)=\frac{1}{\sqrt{n}} \exp (i j(x)), 
\end{align*}
for some  $j(x) \in[-\pi, \pi]$ and $x\in \mathbb Z/4\mathbb Z\times \mathbb Z/4\mathbb Z$. Thus it is suffices to present values in the first row of Table \ref{table:solforb} to fully determine $b$. In particular, $J_1$ in Table \ref{table:solforb} corresponds to the solution in Equation $\left(\ref{eq:Z4Z4sol}\right)$. Next, consider the automorphisms on $\mathbb Z/4\mathbb Z\times \mathbb Z/4\mathbb Z$ defined by 
\begin{align*}
\psi_1(g_1, g_2)= \begin{cases}(g_1, g_2) & \text { if } g_1=0 \text { or } 2 \\ ( -g_1, g_2) & \text { if } g_1=1 \text { or } 3\end{cases},
\end{align*}
$\psi_2(g_1, g_2)=  (-g_1,-g_2)$, and 
\begin{align*}
\psi_3(g_1, g_2)= \begin{cases}(g_1, g_2) & \text { if } g_2=0 \text { or } 2 \\ ( g_1, -g_2) & \text { if } g_2=1 \text { or } 3\end{cases},\end{align*} where $(g_1,g_2)\in\mathbb Z/4\mathbb Z\times \mathbb Z/4\mathbb Z$. It is straightforward to verify $\psi_1, \psi_2$ and $\psi_3$ establish  equivalences from $J_1$ to $J_2$, $J_3$ and $J_4$ as described in Theorem \ref{thm:existng}, respectively. Thus the result follows. 

\small{
\begin{table}[ht]
\centering
\begin{tabular}{|l|l|l|l|l|l|l|l|l|l|}
\hline id &  $j(0,1)$ & $j(0,2)$ & $j(1,0)$ & $j(1,1)$ & $j(1,2)$ & $j(1,3)$ & $j(2,0)$ & $j(2,1)$ & $j(2,2)$ \\
\hline $J_1$ &  1.99103 & 1.57080 & $-1.99103$ & 0 & $-2.44331$ & $ -2.23704$ & $-1.57080$ & 2.44331 & 0 \\
\hline $J_2$ &  1.99103 & 1.57080 & $-0.36516$ & $ 2.23704$  & $-3.05447$ & 0 & $-1.57080$ & 2.44331  & 0 \\
\hline $J_3$ &  0.36516 & 1.57080 & $-0.36516$ & 0 & $-3.05447$ & 2.23704 & $-1.57080$ & 3.05447 & 0 \\
\hline $J_4$ &  0.36516 & 1.57080 &$-1.99103$ & $ -2.23704$ & $-2.44331$ & 0 & $-1.57080$ & 3.05447 & 0 \\
\hline
\end{tabular}
    \caption{Four solutions for $b$ in Proposition \ref{prop:existenceZ4*Z4}}
    \label{table:solforb}
\end{table}}

\end{proof}

\begin{remark}
    We do not know if there are any near-group categories of type  $\mathbb Z/4\mathbb Z\times \mathbb Z/4\mathbb Z+16$ other than the one in Proposition \ref{prop:existenceZ4*Z4} up to  Galois conjugation and  equivalence of fusion categories. For the remainder of this paper, we will use $\mathbb{Z}/4\mathbb Z \times \mathbb{Z}/4\mathbb Z + 16$ to refer specifically to the category  in Proposition \ref{prop:existenceZ4*Z4}.
\end{remark}

\subsubsection{Modular data of the center}  By solving the Equations (\ref{eq:half1})--(\ref{eq:half4}), we found $152$ triples  of solutions for $\left(\xi_i, \tau_i, \omega_i\right)$. Apply Equations $\left(\ref{Tmatrix-center}\right)$ -- $\left(\ref{Smatrix-S33}\right)$, we obtain the modular data of the center of $\mathbb{Z} / 4 \mathbb{Z} \times \mathbb{Z} / 4 \mathbb{Z}+16$. 
 In particular, the center of the near-group category $ \mathbb{Z} / 4 \mathbb{Z} \times \mathbb{Z} / 4 \mathbb{Z}+16 $ is a modular category of rank $304$ with the following simple objects:
 \begin{itemize}
    \item 16 invertible objects $X_g, g \in \mathbb{Z} / 4 \mathbb{Z} \times \mathbb{Z} / 4 \mathbb{Z}$;
\item  16 $\chi_{80}^9$-dimensional simple objects $Y_h, h \in  \mathbb{Z} / 4 \mathbb{Z} \times \mathbb{Z} / 4 \mathbb{Z} $;
\item 120 $\chi_{80}^{10}$-dimensional objects $Z_{k, l}, k, l \in \mathbb{Z} / 4 \mathbb{Z} \times \mathbb{Z} / 4 \mathbb{Z}$, and $k\neq l$;
\item  152 $\chi_{80}^8$-dimensional objects $W_{\omega_i, \tau_i, \xi_i}$, where $\left(\omega_i, \tau_i, \xi_i\right)$ are solutions in Table \ref{table:solZ4Z4}.
 \end{itemize}
 
 We provide detailed modular data in the Mathematica notebook \texttt{MDZ4Z4.nb} in the arXiv source. The pointed subcategory is essential for the next section, and the relevant data is presented below. Using Equations $\left(\ref{Tmatrix-center}\right)$ and $\left(\ref{Smatrix-center}\right)$, the  $S$  and  $T$ matrices for the pointed subcategory of  the center are given by:
\begin{equation}\label{eq:pointedST}
\begin{aligned}
& S_{\mathrm{pt}}= \left(\zeta_4^{g_1 h_1-g_2 h_2}\right), \text { where }\left(g_1, g_2\right),\left(h_1, h_2\right) \in \mathbb{Z} / 4 \mathbb{Z}\times \mathbb{Z} / 4 \mathbb{Z}, \\
& T_{\mathrm{pt}}=\operatorname{diag}\left(\zeta_4^{g_1^2-g_2^2}\right), \text { where }\left(g_1, g_2\right) \in \mathbb{Z} / 4 \mathbb{Z}\times \mathbb{Z} / 4 \mathbb{Z}.
\end{aligned}
\end{equation}


\subsection{Condensation of the Drinfeld center of $\mathbb{Z}/4\mathbb Z\times\mathbb{Z}/4\mathbb Z+16$}

Let $\mathcal{C}$ be the near-group category of type $\mathbb{Z}/4\mathbb Z\times\mathbb{Z}/4\mathbb Z+16$, then the Drinfeld center of $\mathcal C$, denoted by $\mathcal Z(\mathcal C)$, is modular. By analyzing the twists of the pointed subcategory of $\mathcal{Z}(\mathcal{C})$ in Equation  $\left(\ref{eq:pointedST}\right)$, we find that $\mathcal{Z}(\mathcal{C})$ contains a Tannakian fusion subcategory $\mathcal{E} = \operatorname{Rep}(\mathbb{Z}/2\mathbb{Z} \times \mathbb{Z}/4\mathbb{Z})$. Furthermore, we have the dimensions  $\dim(\mathcal E_{\mathcal{Z}(\mathcal{C})}')=640(9+4\sqrt{5})$ and $\dim(\mathcal{Z}(\mathcal{C})_G^0)=80(2+\sqrt{5})^2$, where $G={\mathbb Z/2\mathbb Z\times\mathbb Z/4\mathbb {Z}}$. 

Denote $Y_i$ and $Z_j$, $W_k$ as the simple objects of $\mathcal{Z}(\mathcal{C})$ with dimensions $\chi_{80}^9$, $\chi_{80}^{10}$, $\chi_{80}^8$, respectively. For simplicity of the indices,  we take the ordering of indices as following: $Y_i$'s are indexed by elements in $\mathbb{Z} / 4 \mathbb{Z} \times \mathbb{Z} / 4 \mathbb{Z}$ with order $(0,0),\cdots, (0,3),(1,0),\cdots,(3,3) $,   $Z_j$'s are indexed by $120$ pairs of elements $\{(g_1,g_2), (h_1, h_2)\}$ in $\mathbb{Z} / 4 \mathbb{Z} \times \mathbb{Z} / 4 \mathbb{Z}$ with order 
\begin{align*}\{(0,0), (0,1)\},\cdots,\{(0,1),(0,2)\},\cdots, \{(3,2),(3,3)\}
\end{align*}
and $W_k$ are indexed with the ordering in Table \ref{table:solZ4Z4} in Appendix \ref{Appendix}. Then $\operatorname{rank}(\mathcal{E}_{\mathcal{Z}(\mathcal{C})}')=44$, moreover, we have 
\begin{equation}
\mathcal{O}\left(\mathcal{E}'_{\mathcal{Z}(\mathcal{C})}\right) = \mathcal{O}(\mathcal{E}) \mathrel{\displaystyle \cup}
\left\{ 
\begin{aligned}
    & Y_1, Y_3, Y_6, Y_8, Y_9, Y_{11}, Y_{14}, Y_{16}, \\
    & Z_{10}, Z_{17}, Z_{23}, Z_{35}, Z_{50}, Z_{56}, Z_{62}, Z_{75}, Z_{83},  Z_{90}, Z_{101}, Z_{116}, \\
    & W_1, W_2, W_3, W_4, W_5, W_6, W_7, W_8, \\
    & W_{95}, W_{96}, W_{97}, W_{98}, W_{99}, W_{100}, W_{101}, W_{102}
\end{aligned}
\right\}.
\end{equation}


 Next we determine the set of simple objects of the condensation $\mathcal{D}:=\mathcal{Z}(\mathcal{C})_G^0$ of $\mathcal{Z}(\mathcal{C})$ by $\mathcal{E}$. Let $A:=\operatorname{Fun}(\mathbb  Z/4\mathbb Z\mathbb \times  \mathbb Z/2\mathbb Z)$, we list the action of $A$ on simple objects of $\mathcal{E}_{\mathcal{Z}(\mathcal{C})}'$ in Table \ref{condensationZ4*Z4}.

\noindent Hence we obtain the simple objects of $\mathcal{D}$:
\begin{align*}
 \{F(\mathbf{1}), F(Y_1), (Z_{10})_1,(Z_{10})_2,(Z_{17})_1,(Z_{17})_2,(Z_{23})_1,(Z_{23})_2,F(W_1), F(W_5)\}.
\end{align*}
Next, we determine the modular data of $\mathcal{D}$. Since \begin{align*}\theta_{Y_1}=\theta_{Z_{10}}=1, \theta_{Z_{17}}=\zeta_4, \theta_{Z_{23}}=\zeta_4^3,  \theta_{W_1}=\zeta_5^2,
\theta_{W_5}=\zeta_5^3,
\end{align*}it follows from \cite{KirillovOstrik2001} that the  $T$-matrix of  $\mathcal{D}$  is given by:
\begin{align*}
 \operatorname{diag}\left(1,1,1,1,\zeta_4,\zeta_4,\zeta_4^3,\zeta_4^3,\zeta_5^2,\zeta_5^3\right).
\end{align*}
The remaining task is to determine the  $S$-matrix of  $\mathcal{D}$.

\begin{table}[ht]
    \centering
    \begin{tabular}{|c|c|c|c}
 \hline Dim & $A \otimes-$ &  \\
 \hline  $\chi_{80}^9$   & $A\otimes Y_i=Y_1\oplus Y_3\oplus Y_6\oplus Y_8\oplus Y_9\oplus Y_{11}\oplus Y_{14}\oplus  Y_{16}$ & $i=1,3,6,8,9,11,14,16$   \\
 \hline  $\chi_{80}^{10}$ & $A\otimes Z_j=2(Z_{10}\oplus Z_{35}\oplus Z_{75}\oplus Z_{90})$& $j=10,35,75,90$\\ 
  \hline  $\chi_{80}^{10}$ &$A\otimes Z_j=2(Z_{17}\oplus Z_{56}\oplus Z_{101}\oplus Z_{116})$& $j=17,56,101,116$\\ 
   \hline  $\chi_{80}^{10}$ &$A\otimes Z_j=2(Z_{23}\oplus Z_{50}\oplus Z_{62}\oplus Z_{83})$& $j=23,50,62,83$\\  
 \hline $\chi_{80}^8$ &  $A\otimes W_k=\oplus_{k=1}^4(W_k\oplus W_{k+94})$& $1\leq k\leq 4$, $95\leq k\leq 98$
  \\
  \hline $\chi_{80}^8$ &  $A\otimes W_k=\oplus_{k=5}^8(W_k\oplus W_{k+94})$& $5\leq k\leq 8$, $99\leq k\leq 102$
  \\ 
 \hline 
    \end{tabular}
    \caption{Action of \'{e}tale algebra A}
    \label{condensationZ4*Z4}
\end{table}
\begin{lemma}
$S_{F(Y_1),F(Y_1)}=1$, $S_{F(Y_1), F(W_j)}=-\chi_{80}^8$,
$S_{F(W_1),F(W_1)}=S_{F(W_5),F(W_5)}=-\chi_{20}^6$, $S_{F(W_1),F(W_5)}=\chi_{180}^{14}$.
\end{lemma}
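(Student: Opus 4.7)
The plan is to compute each of the stated $S$-matrix entries by pulling them back to the ambient modular category $\mathcal{Z}(\mathcal{C})$. First I would verify that each of $F(Y_1)$, $F(W_1)$, $F(W_5)$ is indeed simple in $\mathcal{D}$: by inspection of Table \ref{condensationZ4*Z4}, the action of the regular algebra $A = \operatorname{Fun}(G)$ on each of $Y_1, W_1, W_5$ decomposes as a direct sum of exactly $|G| = 8$ pairwise distinct simples of $\mathcal{E}'_{\mathcal{Z}(\mathcal{C})}$. Equivalently, the orbit of each of these three simples under the action of $\operatorname{Rep}(G)$ by tensoring is free, so their stabilizers are trivial, and their images are simple objects of $\mathcal{D}$ with the same Frobenius--Perron dimensions as in the ambient center.

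The next step is to invoke the fact that the de-equivariantization restricts to a ribbon tensor functor $F\colon\mathcal{E}'_{\mathcal{Z}(\mathcal{C})}\to \mathcal{D}$ preserving quantum dimensions, twists, and braidings. Since each $S$-matrix entry is the quantum trace of the double braiding, this functor yields the identity
\begin{equation*}
S^{\mathcal{D}}_{F(X), F(Y)}=S^{\mathcal{Z}(\mathcal{C})}_{X, Y}
\end{equation*}
whenever $X, Y\in\mathcal{E}'_{\mathcal{Z}(\mathcal{C})}$ have simple image under $F$; this is the general framework of \cite{KirillovOstrik2001, DMNO}. With this identity in hand, the lemma reduces to explicit evaluations in the $S$-matrix of $\mathcal{Z}(\mathcal{C})$ recorded in Equations (\ref{Smatrix-center}) and (\ref{Smatrix-S44}).

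The first two identities are now immediate. Reading off the relevant block of (\ref{Smatrix-center}) with $Y_1$ corresponding to $h = (0,0)$, we get $S^{\mathcal{Z}(\mathcal{C})}_{Y_1, Y_1}=\langle (0,0),(0,0)\rangle^{-2}=1$ and $S^{\mathcal{Z}(\mathcal{C})}_{Y_1, W_j}=-d\,\langle (0,0), \tau_j\rangle=-d=-\chi_{80}^8$, where $d=8+4\sqrt{5}$. The vanishing of $\langle(0,0),-\rangle$ in the exponent is what makes the answer independent of $\tau_j$, so the formula applies uniformly to $j=1$ and $j=5$.

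The substantive step is the computation of $S^{\mathcal{Z}(\mathcal{C})}_{W_j, W_{j'}}$ for $(j,j')\in\{(1,1),(5,5),(1,5)\}$ from the closed-form expression (\ref{Smatrix-S44}). This requires substituting the explicit data $(\xi_j, \tau_j, \omega_j)$ from Table \ref{table:solZ4Z4}, together with the values of $a$ and $b$ from Equation (\ref{eq:Z4Z4sol}) and $c=1$, into a double sum indexed by $\mathbb{Z}/4\mathbb{Z}\times\mathbb{Z}/4\mathbb{Z}$. I expect this to be the main obstacle: each entry is a sum of $256$ algebraic numbers living in a cyclotomic field containing $\zeta_{40}$, and showing that the resulting expressions simplify to $-\chi_{20}^6$, $-\chi_{20}^6$, and $\chi_{180}^{14}$ respectively is best carried out symbolically in the accompanying \texttt{MDZ4Z4.nb} notebook rather than by hand.
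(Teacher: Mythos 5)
Your proposal is correct and follows essentially the same route as the paper: both reduce the claim to the identity $S^{\mathcal{D}}_{F(V_1),F(V_2)}=S^{\mathcal{Z}(\mathcal{C})}_{V_1,V_2}$ via \cite[Lemma 4.2]{KirillovOstrik2001} together with the freeness of the $A$-action on $Y_1,W_1,W_5$ (the paper writes this as $S_{F(V_1),F(V_2)}=\tfrac{1}{64}S_{A\otimes V_1,A\otimes V_2}=S_{V_1,V_2}$), and then read the values off from the explicitly computed modular data of the center. Your additional remarks---checking simplicity of the images from Table \ref{condensationZ4*Z4} and locating the entries in the block formulas (\ref{Smatrix-center}) and (\ref{Smatrix-S44}), with the $S_{W,W}$ entries evaluated symbolically---only make explicit what the paper leaves implicit.
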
 
\begin{proof}
By \cite[Lemma 4.2]{KirillovOstrik2001}, we have $S_{F(V_1),F(V_2)}=\frac{1}{64}S_{A\otimes V_1, A\otimes V_2}$, for any  objects $V_1,V_2\in \{W_1,W_5,Y_1\}$. Notice that the \'{e}tale algebra $A$ acts freely on  these simple objects, hence \begin{align*}S_{F(V_1),F(V_2)}=\frac{1}{64}\sum_{g,h\in\mathbb{Z}/4\mathbb Z\times \mathbb{Z}/2\mathbb Z}S_{g\otimes V_1,h\otimes V_2}=S_{V_1,V_2}.\end{align*} Thus the result follows. 
\end{proof}

\begin{lemma}
For all $V\in\mathcal{O}(\mathcal{D})$, $S_{F(Y_1),V}=\chi_{80}^9\, \sigma(\dim(V))$,   where $\sigma(\sqrt{5})=-\sqrt{5}$. Additionally,   $S_{F(W_1),(Z_k)_j}=S_{F(W_5),(Z_k)_j}=0$.
\end{lemma}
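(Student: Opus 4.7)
The plan is to handle the two statements separately, using Galois action for the first and the block structure of the near-group $S$-matrix for the second.

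For the identity $S_{F(Y_1), V} = \chi_{80}^9 \sigma(\dim V)$, I would invoke the Galois action on the modular data of $\mathcal{D}$ described in Section \ref{Preliminary}. Since all dimensions of simples in $\mathcal{D}$ lie in $\mathbb{Z}[\sqrt{5}]$, there is a Galois automorphism $\sigma \in \operatorname{Gal}(\mathbb{Q}(S)/\mathbb{Q})$ sending $\sqrt{5} \mapsto -\sqrt{5}$. Specializing the general relation $\sigma(S_{X,Y}/S_{\mathbf{1},Y}) = S_{X,\hat{\sigma}(Y)}/S_{\mathbf{1},\hat{\sigma}(Y)}$ at $Y = \mathbf{1}$ yields $S_{X, \hat{\sigma}(\mathbf{1})} = \dim \hat{\sigma}(\mathbf{1}) \cdot \sigma(\dim X)$. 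To conclude, I would identify $\hat{\sigma}(\mathbf{1}) = F(Y_1)$. The previous lemma supplies $S_{F(Y_1), F(Y_1)} = 1$ and $S_{F(Y_1), F(W_j)} = -\chi_{80}^8$, which match $\chi_{80}^9 \sigma(\chi_{80}^9) = (9+4\sqrt{5})(9-4\sqrt{5}) = 1$ and $\chi_{80}^9 \sigma(\chi_{80}^8) = (9+4\sqrt{5})(8-4\sqrt{5}) = -\chi_{80}^8$ respectively; combined with the rigidity of the Galois permutation action on the fusion ring, these force $\hat{\sigma}(\mathbf{1}) = F(Y_1)$, and the claimed formula then follows for every simple $V \in \mathcal{O}(\mathcal{D})$.

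For the vanishings $S_{F(W_1), (Z_k)_j} = S_{F(W_5), (Z_k)_j} = 0$, my strategy is to pull back to the $S$-matrix of $\mathcal{Z}(\mathcal{C})$ and exploit its block structure. By Equation (\ref{Smatrix-center}), every $S$-matrix entry in the near-group center between a $\mathfrak{c}$-type simple $Z_a$ and a $\mathfrak{d}$-type simple $W_b$ vanishes. Let $I \colon \mathcal{D} \to \mathcal{Z}(\mathcal{C})$ denote the forgetful functor from local modules over $A$. Then $I(F(W_m))$ is a sub-direct-sum of $A \otimes W_m$ consisting only of $\mathfrak{d}$-type simples, while $I((Z_k)_j)$ is a sub-direct-sum of $A \otimes Z_k$ consisting only of $\mathfrak{c}$-type simples. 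Applying the Kirillov--Ostrik relation that $S^{\mathcal{D}}_{U, V}$ is proportional to $S^{\mathcal{Z}(\mathcal{C})}_{I(U), I(V)}$, the vanishing of every cross entry $S^{\mathcal{Z}(\mathcal{C})}_{W', Z'}$ forces $S^{\mathcal{D}}_{F(W_m), (Z_k)_j} = 0$ for $m \in \{1, 5\}$.

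The main obstacle I anticipate is making the identification $\hat{\sigma}(\mathbf{1}) = F(Y_1)$ watertight: in principle one must rule out other simple objects as candidates. The congruence-subgroup property of the projective $\operatorname{SL}(2, \mathbb{Z})$-representation associated to $\mathcal{D}$, together with the Galois action on the fusion characters $\chi_V \colon X \mapsto S_{X,V}/S_{\mathbf{1},V}$ (which is faithful since $S$ is non-degenerate), should rule out alternative candidates, but this requires checking consistency on a generating set of the fusion ring. A secondary technical point is fixing the precise normalization in the Kirillov--Ostrik reduction for the split objects $(Z_k)_j$, though the vanishing conclusion is unaffected since it only uses nonzero proportionality.
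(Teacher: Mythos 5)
Your argument for the identity $S_{F(Y_1),V}=\chi_{80}^9\,\sigma(\dim V)$ has a genuine gap at exactly the point you flag yourself: you never actually establish $\hat{\sigma}(F(\mathbf{1}))=F(Y_1)$. Checking that the two entries supplied by the previous lemma are \emph{consistent} with this identification does not exclude other candidates, and an appeal to ``rigidity of the Galois permutation action'' is not a citable step; verifying compatibility on a generating set of the fusion ring would in any case only confirm consistency, never force the identification. The paper closes this in one line with the standard identity $\sigma(\dim\mathcal{D})=\dim(\mathcal{D})/\dim(\hat{\sigma}(F(\mathbf{1})))^2$, which follows from applying $\sum_X|S_{X,Y}|^2=\dim(\mathcal{D})$ to $Y=\hat{\sigma}(F(\mathbf{1}))$ together with $S_{X,\hat{\sigma}(F(\mathbf{1}))}=\dim(\hat{\sigma}(F(\mathbf{1})))\,\sigma(\dim X)$. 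Since $\dim(\mathcal{D})=80(2+\sqrt{5})^2$, this forces $\dim(\hat{\sigma}(F(\mathbf{1})))=\chi_{80}^9$, and $F(Y_1)$ is the unique simple object of that dimension. You should replace the consistency check by this computation.

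For the vanishing $S_{F(W_m),(Z_k)_j}=0$ you take a genuinely different route from the paper, and it is essentially sound, but the reduction formula must be stated more carefully than ``$S^{\mathcal{D}}_{U,V}$ is proportional to $S^{\mathcal{Z}(\mathcal{C})}_{I(U),I(V)}$.'' That blanket proportionality is false for pairs of split modules: in Theorem \ref{ModdatacondeZ8} the components $(Z_{2,6})_1,(Z_{2,6})_2$ have the same underlying object in the center, yet $u_1\neq u_2$. What your argument actually needs, and what is true, is the free-module case: if $F(V)$ is a free simple local module and $M$ is any simple local module with underlying object $M_0$ in $\mathcal{Z}(\mathcal{C})$, then $S^{\mathcal{D}}_{M,F(V)}=\frac{1}{\dim A}S_{M_0,V}$ (this is the content of \cite[Lemma 4.2]{KirillovOstrik2001} and \cite[Theorem 2.2]{rowell2025neargroup}). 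Since $F(W_m)$ is free, $W_m$ is a $\mathfrak{d}$-type simple, and the underlying object of $(Z_k)_j$ is a sum of $\mathfrak{c}$-type simples, the zero block in Equation (\ref{Smatrix-center}) gives the claim. The paper instead argues intrinsically in $\mathcal{D}$: it shows the entries $S_{(Z_k)_j,F(W_l)}$ are real via self-duality of $F(W_1),F(W_5)$, then notes that the already-known entries of the $F(W_l)$ column exhaust $\dim(\mathcal{D})=2(\chi_{80}^8)^2+(\chi_{180}^{14})^2+(\chi_{20}^6)^2$ in the orthogonality relation, forcing the remaining entries to vanish. Your route is more direct once the free-module form of the reduction is in hand; the paper's avoids invoking it for split modules at the cost of the reality discussion.
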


\begin{proof}
   Let $\sigma\in \operatorname{Gal}(\mathbb Q(T)/\mathbb Q)$ be the Galois automorphism  such that $\sigma(\sqrt{5})=-\sqrt{5}$. Then $\sigma(\dim(\mathcal{D}
   ))=\frac{\dim(\mathcal{D})} {\dim(\hat{\sigma}(F(\mathbf{1})))^2}$. This implies $\dim(\hat{\sigma}(F(\mathbf{1})))=\chi_{80}^9$, so  $\hat{\sigma}(F(\mathbf{1}))=F(Y_1)$ and consequently, $S_{F(Y_1),V}=\chi_{80}^9\,\sigma(\dim(V))$.  
 In particular, $S_{(Z_k)_j,F(Y_1)}=\chi_{20}^5$.

As $\theta_{W_1}=\zeta_5^2 \text { and } \theta_{W_5}=\zeta_5^3$, $F(W_1)$ and $F(W_5)$ are in the same orbit under the action of $\sigma$. Since $S_{F(W_i),F(W_j)}$ are real algebraic integers, $F(W_1)$ and $F(W_5)$ are self-dual. Therefore $S_{(Z_k)_j,F(W_l)}$ are also real algebraic integers, where $l=1,5$. Since $\dim(\mathcal{D})=2(\chi_{80}^8)^2+(\chi_{180}^{14})^2+(\chi_{20}^6)^2$, we conclude that  $S_{(Z_k)_j,F(W_l)}=0$ for $l=1,5$.
\end{proof}


Thus the $S$-matrix of $\mathcal{D}$ is of the form 
\begin{align}\label{eq:Swith6unknown}
    \left(\begin{array}{cccccccccc}
         1& \chi_{80}^9&\chi_{20}^5&\chi_{20}^5&
        \chi_{20}^5 &\chi_{20}^5& \chi_{20}^5 &\chi_{20}^5&\chi_{80}^8&\chi_{80}^8\\
        \chi_{80}^9 &1&\chi_{20}^5&\chi_{20}^5&
        \chi_{20}^5 &\chi_{20}^5& \chi_{20}^5 &\chi_{20}^5&
        -\chi_{80}^8&-\chi_{80}^8\\
       \chi_{20}^5&\chi_{20}^5&&&&&&&0&0\\
       \chi_{20}^5&\chi_{20}^5&&&&&&&0&0\\
        \chi_{20}^5 &\chi_{20}^5&&&***&&&&0&0\\
        \chi_{20}^5&\chi_{20}^5&&&&&&&0&0\\
        \chi_{20}^5 &\chi_{20}^5&&&&&&&0&0\\
        \chi_{20}^5&\chi_{20}^5&&&&&&&0&0\\
        \chi_{80}^8&-\chi_{80}^8&0&0&0&0&0&0&
        -\chi_{20}^6&\chi_{180}^{14}\\
        \chi_{80}^8&-\chi_{80}^8&0&0&0&0&0&0&\chi_{180}^{14}&-\chi_{20}^6\\
    \end{array}\right)
\end{align}

To determine the remaining entries of the $S$-matrix of $\mathcal{D}$ in $(\ref{eq:Swith6unknown})$, we apply the method introduced in the reconstruction program in \cite{NRWWrank6,ng2023classification}. Specifically, we begin by identifying all possible representation types of the associated modular congruence representation of $\mathcal{D}$. This involves decomposing the representation into  a direct sum of irreducible components. Next,  we reconstruct the modular data of $\mathcal{D}$ from the representation of $\operatorname{SL}(2,\mathbb Z)$, leveraging properties of modular categories, such as Galois symmetry, the Verlinde formula, and other structural constraints.

Let $\rho_\mathcal{D}$ be the $\operatorname{SL}(2,\mathbb Z)$  modular congruence representation of $\mathcal{D}$, then it follows from the spectrum of $\rho_\mathcal{D}(\mathfrak{t})$ that   $\rho_\mathcal{D}$ contains a unique direct summand $\rho_1$ of dimension $3$ of level $5$, where
$$\rho_1(\mathfrak{s})=\frac{-1}{\sqrt{5}}\left(\begin{array}{ccc}
     1& \sqrt{2} &\sqrt{2}\\
     \sqrt{2}&2\cos\frac{2\pi}{5}&2\cos\frac{6\pi}{5}\\
     \sqrt{2}&2\cos\frac{6\pi}{5}&2\cos\frac{2\pi}{5}\\
\end{array}\right) \text{ and } \rho_1(\mathfrak{t})=\operatorname{diag}(1,\zeta_5^2,\zeta_5^3).$$
Assume $\rho_\mathcal{D}\cong \rho_1\oplus \rho'$, then $\rho'$ is a direct sum of irreducible representations with level $4$, whose $\mathfrak{t}$-spectrum belongs to 
$\{1,\zeta_4,\zeta_4^3\}$. Notice that these irreducible direct summands of level $4$ are  $\rho_2,\rho_3,\rho_4,\rho_5$ \cite{Nobs1,Nobs2,plavnik2023modular}, where
\begin{align*}
    \rho_2(\mathfrak{s})=\frac{1}{2}\left(\begin{array}{ccc}
        -1 & 1&\sqrt{2} \\
      1 & -1&\sqrt{2} \\
       \sqrt{2} & -\sqrt{2}&0 \\  
    \end{array}\right), \, \rho_2(\mathfrak{t})=\operatorname{diag}(\zeta_4,\zeta_4^3,1),
\end{align*}
\begin{align*}
    \rho_3(\mathfrak{s})=\frac{1}{2}\left(\begin{array}{cc}
        \zeta_4 & \sqrt{3}\zeta_4 \\
     \sqrt{3}\zeta_4 & -\zeta_4 \\  
    \end{array}\right), \, \rho_3(\mathfrak{t})=\operatorname{diag}(\zeta_4,\zeta_4^3),\\
\rho_4(\mathfrak{s})=\zeta_4^3,\, \rho_4(\mathfrak{t})=\zeta_4,\, \rho_5(\mathfrak{s})=\zeta_4,\, \rho_5(\mathfrak{t})=\zeta_4^3.
\end{align*}
\begin{lemma}\label{level4case}
    Let $\rho_\mathcal{D}=\rho_1\oplus \rho'$ be the congruence $\operatorname{SL}(2,\mathbb Z)$ representation associated $\mathcal{D}$. The  $\rho_\mathcal{D}$ is  equivalent to one of the following decompositions:
    \begin{align*}
       \rho_1\oplus2\rho_2\oplus\rho_0,  \quad \rho_1\oplus\rho_2\oplus\rho_3\oplus2\rho_0,  \quad \rho_1\oplus\rho_2\oplus\rho_4\oplus\rho_5\oplus 2\rho_0.
    \end{align*}
\end{lemma}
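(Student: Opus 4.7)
The plan is to decompose $\rho' := \rho_{\mathcal D} \ominus \rho_1$, which has dimension $7$, into irreducible congruence representations of level dividing $4$, by using the $\mathfrak t$-spectrum read off from the $T$-matrix together with the dimension constraint, and then eliminating the candidates incompatible with the partial $S$-matrix of Equation~(\ref{eq:Swith6unknown}).

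First, I compute the $\mathfrak t$-spectrum of $\rho'$. The $T$-matrix of $\mathcal D$ shows that $\rho_{\mathcal D}(\mathfrak t)$ has spectrum $\{1,1,1,1,\zeta_4,\zeta_4,\zeta_4^3,\zeta_4^3,\zeta_5^2,\zeta_5^3\}$. Since $\rho_1(\mathfrak t)=\operatorname{diag}(1,\zeta_5^2,\zeta_5^3)$, the spectrum of $\rho'(\mathfrak t)$ is $\{1,1,1,\zeta_4,\zeta_4,\zeta_4^3,\zeta_4^3\}$. Because all its $\mathfrak t$-eigenvalues are fourth roots of unity, $\rho'$ has level dividing $4$, hence it is a direct sum of the Nobs irreducibles $\rho_0,\rho_2,\rho_3,\rho_4,\rho_5$.

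Writing $\rho'\cong a\rho_0\oplus b\rho_2\oplus c\rho_3\oplus d\rho_4\oplus e\rho_5$ and matching the multiplicities of the eigenvalues $1$, $\zeta_4$, $\zeta_4^3$ together with the total dimension yields the linear system
\begin{align*}
a+b=3,\qquad b+c+d=2,\qquad b+c+e=2,\qquad a+3b+2c+d+e=7.
\end{align*}
The last equation is automatic given the first three, and $d=e$ is forced. Parametrising the solutions by $(b,c,d)$ with $b+c+d=2$ produces exactly six non-negative integer solutions: the three with $b\ge 1$ give the decompositions listed in the lemma, while the three with $b=0$ would give $\operatorname{mult}(\rho_0,\rho_{\mathcal D})=3$.

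It remains to rule out the three $b=0$ solutions. To do this I would compare a character-theoretic invariant of $\rho_{\mathcal D}$ --- for instance $\operatorname{Tr}\rho_{\mathcal D}(\mathfrak s)$ or $\operatorname{Tr}\rho_{\mathcal D}((\mathfrak s\mathfrak t)^k)$ for small $k$ --- as computed from the partial $S$- and $T$-matrices, against the same invariant evaluated on each candidate decomposition, using the explicit matrices for $\rho_0,\rho_1,\rho_2,\rho_3,\rho_4,\rho_5$ displayed above the lemma. Since the four rows of $S$ indexed by $F(\mathbf 1), F(Y_1), F(W_1)$, and $F(W_5)$ are fully known and the remaining entries are further constrained by the Galois action and the Verlinde formula established in the preceding lemmas, enough character values of $\rho_{\mathcal D}$ can be pinned down to force $\operatorname{mult}(\rho_0,\rho_{\mathcal D})\le 2$. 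The main obstacle is precisely this last step: while Steps~1--2 are a routine bookkeeping of eigenvalues and dimensions, isolating a trace invariant computable purely from the already-known portion of $S$ that discriminates between the multiplicity-$3$ case and the multiplicity-$1$ or $2$ cases will require carefully combining the Galois-orbit structure of the $\mathfrak t$-eigenspace for eigenvalue $1$ with the norm condition on the rows of $\rho_{\mathcal D}(\mathfrak s)$.
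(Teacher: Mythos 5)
Your enumeration of the six candidate decompositions is correct and coincides with the intermediate list in the paper's own proof: matching the multiplicities of the $\mathfrak t$-eigenvalues $1,\zeta_4,\zeta_4^3$ of $\rho'$ against the spectra of $\rho_0,\rho_2,\rho_3,\rho_4,\rho_5$ gives exactly the three decompositions of the lemma together with the three in which $\rho_2$ does not occur and $\rho_0$ has multiplicity $3$. The genuine gap is in eliminating the latter three, which you explicitly leave open. Worse, the invariant you propose for this purpose is not computable from the available data: $\operatorname{Tr}\rho_{\mathcal D}(\mathfrak s)$ requires the full diagonal of the $S$-matrix, and the six diagonal entries of the middle block of $(\ref{eq:Swith6unknown})$ are precisely the unknowns that this lemma is a step toward determining; in addition, the lift of the projective $\operatorname{SL}(2,\mathbb Z)$-representation to a linear one is only defined up to twisting by a character, which further obscures any single character value.

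The missing idea is a short structural argument that uses only the $T$-matrix. In each of the three excluded cases one has $\rho_{\mathcal D}\cong(\rho_1\oplus 3\rho_0)\oplus\rho''$ with $\rho''\in\{2\rho_3,\ \rho_3\oplus\rho_4\oplus\rho_5,\ 2\rho_4\oplus 2\rho_5\}$. The $\mathfrak t$-spectrum of $\rho_1\oplus 3\rho_0$ is contained in $\{1,\zeta_5^2,\zeta_5^3\}$ while that of $\rho''$ is contained in $\{\zeta_4,\zeta_4^3\}$, so $\rho_{\mathcal D}$ would decompose as a direct sum of two nonzero subrepresentations with disjoint $\mathfrak t$-spectra. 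This is impossible by \cite[Lemma 3.18]{BNRWbyrank2016}: such a splitting forces the $S$-matrix to be block diagonal with respect to a partition of the simple objects, contradicting the fact that the row of $S$ indexed by the unit object has no zero entries. Note that this criterion does not discriminate among the three surviving cases, since $\rho_2(\mathfrak t)=\operatorname{diag}(\zeta_4,\zeta_4^3,1)$ straddles both eigenvalue sets; those are dealt with separately in Propositions \ref{case(3,2,1,1)} and \ref{case(3,3,3,1)}. Without this (or an equivalent) argument your proof of the lemma is incomplete.
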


\begin{proof}
By comparing the $\mathfrak{t}$-spectrum of $\rho'(\mathfrak{t})$ with that of $\rho_\mathcal{D}(\mathfrak{t}
)$, a direct computation shows $\rho'$ is equivalent to one of the following
$$\begin{aligned}
    &2\rho_2 \oplus \rho_0, \quad \rho_2 \oplus \rho_3 \oplus 2\rho_0, \quad \rho_2 \oplus \rho_4 \oplus \rho_5 \oplus 2\rho_0, \\ 
    &2\rho_3 \oplus 3\rho_0, \quad \rho_3 \oplus \rho_4 \oplus \rho_5 \oplus 3\rho_0, \quad 2\rho_4 \oplus 2\rho_5 \oplus 3\rho_0.
\end{aligned}$$
If $\rho_\mathcal{D}\cong(\rho_1\oplus 3\rho_0)\oplus \rho''$ for $\rho''=2\rho_3,\rho_3\oplus\rho_4\oplus\rho_5$, or $2\rho_4\oplus2\rho_5$, then $\rho_\mathcal{D}$ is a direct sum of two sub-representations with disjoint $\mathfrak{t}$-spectrums. However,  it is not possible  by   \cite[Lemma 3.18]{BNRWbyrank2016}. This completes the proof of the lemma.
\end{proof}
  
 \begin{proposition} \label{case(3,2,1,1)}
Let $\rho_\mathcal{D}$ be the congruence 
 $\operatorname{SL}(2,\mathbb Z)$ representation associated with $\mathcal{D}$. Then $\rho_\mathcal{D}\ncong\rho_1\oplus\rho_2\oplus\rho_3\oplus2\rho_0$.
 \end{proposition}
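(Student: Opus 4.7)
The strategy is to follow the reconstruction program of \cite{NRWWrank6, ng2023classification}: assume the hypothesized decomposition, use it to express the unknown entries of the $S$-matrix in $(\ref{eq:Swith6unknown})$ in terms of finitely many parameters, and then detect an obstruction from the modular axioms.

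Suppose $\rho_{\mathcal{D}}\cong\rho_1\oplus\rho_2\oplus\rho_3\oplus 2\rho_0$. I first locate the image of each irreducible summand in the basis of simple objects. Because the $\mathfrak{t}$-eigenvalues $\zeta_5^2$ and $\zeta_5^3$ appear only inside $\rho_1(\mathfrak{t})$, the simples $F(W_1)$ and $F(W_5)$ must span two of the three $\rho_1$-directions. The third direction of $\rho_1$ lies in the $\mathfrak{t}$-eigenvalue-$1$ subspace $\operatorname{span}\{F(\mathbf{1}),F(Y_1),(Z_{10})_1,(Z_{10})_2\}$; since $S_{(Z_{10})_j,F(W_1)}=0$ and $S_{F(\mathbf 1),F(W_1)}=\chi_{80}^8$, $S_{F(Y_1),F(W_1)}=-\chi_{80}^8$, applying $\rho_{\mathcal{D}}(\mathfrak{s})$ to $F(W_1)$ forces this third direction to be proportional to $F(\mathbf 1)-F(Y_1)$.

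Next, for each $\mathfrak{t}$-eigenspace, the trace of the composition of $\rho_{\mathcal{D}}(\mathfrak{s})$ with the projector onto that eigenspace is basis-independent, and equals in the decomposed basis a sum of diagonal entries of the $\rho_i(\mathfrak{s})$. Matching with the original basis gives the linear relations
\begin{align*}
 S_{(Z_{10})_1,(Z_{10})_1}+S_{(Z_{10})_2,(Z_{10})_2}&=30+12\sqrt{5},\\
 S_{(Z_{17})_1,(Z_{17})_1}+S_{(Z_{17})_2,(Z_{17})_2}&=-(10+4\sqrt{5})(1-\zeta_4),\\
 S_{(Z_{23})_1,(Z_{23})_1}+S_{(Z_{23})_2,(Z_{23})_2}&=-(10+4\sqrt{5})(1+\zeta_4).
\end{align*}
Analogously, computing the determinants of the $2\times 2$ restrictions to the $\zeta_4$- and $\zeta_4^3$-eigenspaces using the products of the corresponding diagonal entries of $\rho_2(\mathfrak{s})$ and $\rho_3(\mathfrak{s})$ yields quadratic identities that involve the off-diagonal unknowns.

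Combining these constraints with the Galois action $\sigma(S_{X,Y}/d_Y)=S_{X,\hat\sigma(Y)}/d_{\hat\sigma(Y)}$, which, via complex conjugation $\zeta_4\mapsto\zeta_4^{-1}$, relates the $(Z_{17})_j$-block to the $(Z_{23})_j$-block, and with the symmetry of $S$, the six unknowns in $(\ref{eq:Swith6unknown})$ reduce to a small finite parameter set. I then substitute the parametrized $S$ into the Verlinde formula
\[
N_{X,Y}^Z=\frac{1}{\dim(\mathcal{D})}\sum_{W\in\mathcal{O}(\mathcal{D})}\frac{S_{X,W}S_{Y,W}S_{Z^*,W}}{S_{\mathbf 1,W}}\in\mathbb{Z}_{\geq 0}
\]
and test non-negativity and integrality on candidate coefficients. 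The main obstacle is identifying exactly which fusion coefficient's failure produces the contradiction; in practice this is carried out by a symbolic computation, and the inconsistency is most cleanly seen in self-fusions inside the $(Z_k)_j$ family, where the $\rho_2\oplus\rho_3$ mixing within the $\zeta_4$-eigenspace is strongest. The resulting violation of $N_{X,Y}^Z\in\mathbb{Z}_{\geq 0}$ rules out the decomposition $\rho_1\oplus\rho_2\oplus\rho_3\oplus 2\rho_0$.
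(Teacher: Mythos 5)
Your overall strategy is the same reconstruction philosophy the paper uses, and several of your intermediate steps are correct: the identification of the $\rho_1$-direction in the $\mathfrak{t}$-eigenvalue-$1$ subspace with (a multiple of) $F(\mathbf 1)-F(Y_1)$ is right, and your trace identities check out (e.g.\ the compression of $\rho_\mathcal{D}(\mathfrak{s})$ to the $\theta=1$ eigenspace has trace $2-\tfrac{1}{\sqrt5}$, which with $\sqrt{\dim(\mathcal{D})}=20+8\sqrt5$ gives $S_{(Z_{10})_1,(Z_{10})_1}+S_{(Z_{10})_2,(Z_{10})_2}=30+12\sqrt5$, and similarly for the $\zeta_4^{\pm1}$ eigenspaces). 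However, the proof is not complete, for two reasons. First, the constraints you list do not visibly close up into ``a small finite parameter set'': traces and determinants of eigenspace compressions only control entries \emph{within} a single $\mathfrak{t}$-eigenspace, so the cross-eigenspace unknowns such as $S_{(Z_{10})_i,(Z_{17})_j}$ and $S_{(Z_{17})_i,(Z_{23})_j}$ are left essentially unconstrained by the mechanism you describe; you would need either the full unitarity relations or the explicit intertwiner to pin them down, and you do not show that this happens. Second, and more seriously, the contradiction itself is never exhibited: you state that ``a symbolic computation'' finds a violation of $N_{X,Y}^Z\in\mathbb{Z}_{\geq0}$ somewhere in the $(Z_k)_j$ self-fusions, but you neither identify the failing coefficient nor give the computation. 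As written, the argument asserts that an obstruction exists rather than producing one.

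For comparison, the paper avoids both problems by invoking \cite[Theorem 3.4]{NRWWrank6}: there is a \emph{real orthogonal} matrix $U$ with $\rho_\mathcal{D}(\mathfrak{s})=U\tilde\rho(\mathfrak{s})U^{t}$ and $U\rho_\mathcal{D}(\mathfrak{t})=\rho_\mathcal{D}(\mathfrak{t})U$; the commutation with $\mathfrak{t}$ forces $U$ to be block-diagonal along the $\mathfrak{t}$-eigenspaces, the known first two rows of the $S$-matrix (unit object and its Galois conjugate) determine enough entries of $U$ to compute the entire candidate $S$-matrix explicitly (with entries $c_1=-1+\zeta_4$, $c_2=-1+\sqrt3\,\zeta_4$), and then one checks directly that rows $4$ and $5$ of that matrix have inner product $-2\operatorname{Re}(c_1)-2\operatorname{Re}(c_2)=4\neq0$, violating orthogonality. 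If you want to salvage your route, you should either carry out the $U$-conjugation as the paper does, or else explicitly derive all $21$ unknown entries from your constraints and display the specific Verlinde coefficient that fails to be a non-negative integer.
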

 \begin{proof}
     Assume $\rho_\mathcal{D}\cong \tilde{\rho}:=\rho_1\oplus\rho_2\oplus\rho_3\oplus2\rho_0$. Notice that 
     \begin{align*}
         \tilde{\rho}(\mathfrak{s})=\left(
         \begin{array}{cccccccccc}
             1&0&0&0&0&0&0&0 &  0&0\\
              0&1&0&0&0&0&0&0&0&0\\
              0&0&-\frac{1}{\sqrt{5}}&0&0&0&0&0 &-\frac{\sqrt{2}}{\sqrt{5}}&-\frac{\sqrt{2}}{\sqrt{5}}\\
               0&0&0&0&\frac{1}{\sqrt{2}}&0&\frac{1}{\sqrt{2}}&0&0&0\\
                0&0&0&\frac{1}{\sqrt{2}}&-\frac{1}{2}&0&\frac{1}{2}&0&0&0\\
                0&0&0&0&0&\frac{\zeta_4}{2}&0&\frac{\sqrt{3}\zeta_4}{2}&0&0\\
              0&0&0&\frac{1}{\sqrt{2}}&\frac{1}{2}&0&-\frac{1}{2}&0&0&0\\
               0&0&0&0&0&\frac{\sqrt{3}\zeta_4}{2}&0&-\frac{-\zeta_4}{2}&0&0\\               
               0&0 &-\frac{\sqrt{2}}{\sqrt{5}}&0&0&0&0 &  0&-\frac{2\cos\frac{2\pi}{5}}{\sqrt{5}}&-\frac{2\cos\frac{6\pi}{5}}{\sqrt{5}}\\
               0&0 &-\frac{\sqrt{2}}{\sqrt{5}}&0&0&0&0 &  0&-\frac{2\cos\frac{6\pi}{5}}{\sqrt{5}}&-\frac{2\cos\frac{2\pi}{5}}{\sqrt{5}}
         \end{array}\right).
     \end{align*}
     It follows from \cite[Theorem 3.4]{NRWWrank6} that there is a real orthogonal matrix $U=(a_{ij})$   such that $\rho_\mathcal{D}(\mathfrak{s})=U\tilde{\rho}(\mathfrak{s})U^t$ and $U\rho_\mathcal{D}(\mathfrak{t})=\rho_\mathcal{D}(\mathfrak{t})U$.  Then $U=\left(\begin{array}{cccc}
          A_1&&&\\
          &A_2&&\\
          &&A_3&\\
          &&&A_4 \\
     \end{array}\right)$ and $A_4=\left(\begin{array}{cc}
          \mu_1&  \\
          & \mu_2
     \end{array}\right)$, $\mu_1,\mu_2\in\{\pm1\}$.

     Hence, we obtain  $\rho_\mathcal{D}(\mathfrak{s})=\left(\begin{array}{cc} 
       B_1&B_2\\
       B_2^t&B_3
     \end{array}\right)$, where\\  
\small{$$B_1=\begin{pmatrix}
1 - \frac{a_{13}^2 + 1}{\sqrt{5}} - a_{14}^2 & -a_{13} a_{23} \frac{\sqrt{5} + 1}{\sqrt{5}} - a_{14} a_{24} & -a_{13} a_{33} \frac{\sqrt{5} + 1}{\sqrt{5}} - a_{14} a_{34} & -a_{13} a_{43} \frac{\sqrt{5} + 1}{\sqrt{5}} - a_{14} a_{44} \\
-a_{13} a_{23} \frac{\sqrt{5} + 1}{\sqrt{5}} - a_{14} a_{24} & 1 - \frac{a_{23}^2 + 1}{\sqrt{5}} - a_{24}^2 & -a_{23} a_{33} \frac{\sqrt{5} + 1}{\sqrt{5}} - a_{24} a_{34} & -a_{23} a_{43} \frac{\sqrt{5} + 1}{\sqrt{5}} - a_{24} a_{44} \\
-a_{13} a_{33} \frac{\sqrt{5} + 1}{\sqrt{5}} - a_{14} a_{24} & -a_{23} a_{33} \frac{\sqrt{5} + 1}{\sqrt{5}} - a_{24} a_{34} & 1 - \frac{a_{33}^2 + 1}{\sqrt{5}} - a_{34}^2 & -a_{33} a_{43} \frac{\sqrt{5} + 1}{\sqrt{5}} - a_{34} a_{44} \\
-a_{13} a_{43} \frac{\sqrt{5} + 1}{\sqrt{5}} - a_{14} a_{44} & -a_{23} a_{43} \frac{\sqrt{5} + 1}{\sqrt{5}} - a_{24} a_{44} & -a_{33} a_{43} \frac{\sqrt{5} + 1}{\sqrt{5}} - a_{34} a_{44} & 1 - \frac{a_{43}^2 + 1}{\sqrt{5}} - a_{44}^2
\end{pmatrix}$$}

\small{$$B_2 = \begin{pmatrix}
\frac{a_{14} a_{55}}{\sqrt{2}} & \frac{a_{14} a_{65}}{\sqrt{2}} & \frac{a_{14} a_{77}}{\sqrt{2}} & \frac{a_{14} a_{87}}{\sqrt{2}} & -\frac{\sqrt{2} a_{13} \mu_1}{\sqrt{5}} & -\frac{\sqrt{2} a_{13} \mu_2}{\sqrt{5}} \\
\frac{a_{24} a_{55}}{\sqrt{2}} & \frac{a_{24} a_{65}}{\sqrt{2}} & \frac{a_{24} a_{77}}{\sqrt{2}} & \frac{a_{24} a_{87}}{\sqrt{2}} & -\frac{\sqrt{2} a_{23} \mu_1}{\sqrt{5}} & -\frac{\sqrt{2} a_{23} \mu_2}{\sqrt{5}} \\
\frac{a_{34} a_{55}}{\sqrt{2}} & \frac{a_{34} a_{65}}{\sqrt{2}} & \frac{a_{34} a_{77}}{\sqrt{2}} & \frac{a_{34} a_{87}}{\sqrt{2}} & -\frac{\sqrt{2} a_{33} \mu_1}{\sqrt{5}} & -\frac{\sqrt{2} a_{33} \mu_2}{\sqrt{5}} \\
\frac{a_{44} a_{55}}{\sqrt{2}} & \frac{a_{44} a_{65}}{\sqrt{2}} & \frac{a_{44} a_{77}}{\sqrt{2}} & \frac{a_{44} a_{87}}{\sqrt{2}} & -\frac{\sqrt{2} a_{43} \mu_1}{\sqrt{5}} & -\frac{\sqrt{2} a_{43} \mu_2}{\sqrt{5}} \\
\end{pmatrix}$$}

$$B_3=\frac{1}{2}\left(\begin{array}{cc} B_3^{(1,2,3,4)} &\mathbf 0_{4\times 2}\\\mathbf 0_{2\times 4}&B_3^{(5,6)}\end{array}\right), \text{ with } B_3^{(5,6)} = \begin{pmatrix}
\frac{-4 \cos \frac{2\pi}{5}}{\sqrt{5}} & \frac{-4 \mu_1 \mu_2 \cos \frac{6\pi}{5}}{\sqrt{5}} \\
\frac{-4 \mu_1 \mu_2 \cos \frac{6\pi}{5}}{\sqrt{5}} & \frac{-4 \cos \frac{2\pi}{5}}{\sqrt{5}}
\end{pmatrix}, \text{ and } B_3^{(1,2,3, 4)} = $$

\footnotesize{$\begin{pmatrix}
-a_{55}^2 + \zeta_4 a_{56}^2 & -a_{55} a_{65} + \zeta_4 a_{56} a_{66} & a_{55} a_{77} + \sqrt{3} \zeta_4 a_{56} a_{78} & a_{55} a_{65} + \sqrt{3} \zeta_4 a_{56} a_{88} \\
-a_{55} a_{65} + \zeta_4 a_{56} a_{66} & -a_{65}^2 + \zeta_4 a_{66}^2 & a_{65} a_{77} + \sqrt{3} \zeta_4 a_{66} a_{78} & a_{65} a_{87} + \sqrt{3} \zeta_4 a_{66} a_{88} \\
a_{55} a_{77} + \sqrt{3} \zeta_4 a_{56} a_{78} & -a_{65} a_{77} + \zeta_4 a_{66} a_{78} & -\left(a_{77}^2 + \zeta_4 a_{78}^2\right) & -\left(a_{77} a_{87} + \sqrt{3} \zeta_4 a_{78} a_{88}\right) \\
a_{55} a_{87} + \zeta_4 a_{56} a_{88} & a_{65} a_{87} + \sqrt{3} \zeta_4 a_{66} a_{88} & -\left(a_{77} a_{87} + \sqrt{3} \zeta_4 a_{78} a_{88}\right) & -\left(a_{87}^2 + \zeta_4 a_{88}^2\right)
\end{pmatrix}$.}\\

  \normalsize Without loss of generality, assume the first row and the second row are determined by the unit object and its Galois conjugate. Hence we have \begin{align*}a_{13}\mu_j=a_{23}\mu_j=-\frac{1}{\sqrt{2}}, a_{33}=a_{43}=0, a_{13}a_{23}=-\frac{1}{2}, a_{14}=a_{24},a_{14}^2=\frac{1}{4}.
  \end{align*}
  Let $a_{13}=\frac{1}{\sqrt{2}}$ (the other case is similar), $\mu_1=\mu_2=-1$, $a_{14}=\frac{1}{2}$.   From coefficients in  the first row and second row of the  $S$-matrix, we have \begin{align*}
a_{14}a_{55}=a_{14}a_{65}=a_{14}a_{77}=a_{14}a_{87}=\frac{1}{2\sqrt{2}},
\end{align*}
which implies $a_{55}=a_{65}=a_{77}=a_{87}=\frac{1}{\sqrt{2}}$. The  orthogonality of matrices $A_2$ and $A_3$ implies $a_{56}a_{66}=-\frac{1}{2}=a_{78}a_{88}$. Then the $S$-matrix $S=\sqrt{\dim(\mathcal{D})}\,\rho_\mathcal{D} 
 (\mathfrak{s})=$ \begin{align*}
    \left(\begin{array}{cccccccccc}
         1& \chi_{80}^9&\chi_{20}^5&\chi_{20}^5&
        \chi_{20}^5 &\chi_{20}^5& \chi_{20}^5 &\chi_{20}^5&\chi_{80}^8&\chi_{80}^8\\
        \chi_{80}^9 &1&\chi_{20}^5&\chi_{20}^5&
        \chi_{20}^5 &\chi_{20}^5& \chi_{20}^5 &\chi_{20}^5&
        -\chi_{80}^8&-\chi_{80}^8\\
       \chi_{20}^5&\chi_{20}^5&3\chi_{20}^5&
       -\chi_{20}^5&-\chi_{20}^5&-\chi_{20}^5&-\chi_{20}^5&-\chi_{20}^5&0&0\\
       \chi_{20}^5&\chi_{20}^5&-\chi_{20}^5&
       3\chi_{20}^5&-\chi_{20}^5&-\chi_{20}^5&-\chi_{20}^5&-\chi_{20}^5&0&0\\
        \chi_{20}^5 &\chi_{20}^5&-\chi_{20}^5&-\chi_{20}^5&c_1\chi_{20}^5&\overline{c_1}\chi_{20}^5&c_2\chi_{20}^5& \overline{c_2}\chi_{20}^5&0&0\\
        \chi_{20}^5&\chi_{20}^5&-\chi_{20}^5&-\chi_{20}^5&\overline{c_1}\chi_{20}^5&c_1\chi_{20}^5&\overline{c_2}\chi_{20}^5& c_2\chi_{20}^5&0&0\\
        \chi_{20}^5 &\chi_{20}^5&-\chi_{20}^5&-\chi_{20}^5&c_2\chi_{20}^5& \overline{c_2}\chi_{20}^5&c_1\chi_{20}^5&\overline{c_1}\chi_{20}^5&0&0\\
        \chi_{20}^5&\chi_{20}^5&-\chi_{20}^5&-\chi_{20}^5&\overline{c_2}\chi_{20}^5& c_2\chi_{20}^5&\overline{c_1}\chi_{20}^5&c_1\chi_{20}^5&0&0\\
        \chi_{80}^8&-\chi_{80}^8&0&0&0&0&0&0&
        -\chi_{20}^6&\chi_{180}^{14}\\
        \chi_{80}^8&-\chi_{80}^8&0&0&0&0&0&0&\chi_{180}^{14}&-\chi_{20}^6\\
    \end{array}\right),
\end{align*}
where $c_1:=(-1+\zeta_4)$ and $c_2=(-1+\sqrt{3}\zeta_4)$.
However,  the forth and fifth rows violate the orthogonality of $S$-matrix. Therefore, $\rho_\mathcal{D}\ncong\rho_1\oplus\rho_2\oplus\rho_3\oplus2\rho_0$.  
 \end{proof}
  \begin{proposition}
      \label{case(3,3,3,1)}
Let $\rho_\mathcal{D}$ be the congruence $\operatorname{SL}(2,\mathbb Z)$ representation of $\mathcal{D}$. Then 
 $\rho_\mathcal{D}\ncong \rho_1\oplus2\rho_2\oplus\rho_0$.
 \end{proposition}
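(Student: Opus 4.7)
The plan is to mirror the strategy of Proposition \ref{case(3,2,1,1)}: assume for contradiction that $\rho_\mathcal{D} \cong \tilde\rho := \rho_1 \oplus 2\rho_2 \oplus \rho_0$, write out $\tilde\rho(\mathfrak{s})$ and $\tilde\rho(\mathfrak{t})$ as explicit $10 \times 10$ matrices, and invoke \cite[Theorem 3.4]{NRWWrank6} to obtain a real orthogonal matrix $U$ with $\rho_\mathcal{D}(\mathfrak{s}) = U\tilde\rho(\mathfrak{s})U^t$ and $U\rho_\mathcal{D}(\mathfrak{t}) = \rho_\mathcal{D}(\mathfrak{t})U$. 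I would order the summands so that the $\rho_1$ block is carried by $\{F(\mathbf{1}), F(W_1), F(W_5)\}$ and the $\rho_0$ block by $F(Y_1)$, matching the $T$-data already pinned down.

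The commutation $U\rho_\mathcal{D}(\mathfrak{t}) = \rho_\mathcal{D}(\mathfrak{t})U$ together with reality of $U$ forces a block decomposition along the $\mathfrak{t}$-eigenspaces: a $4 \times 4$ orthogonal block $A_1$ on the eigenvalue $1$ eigenspace (dimension $4$), a $2 \times 2$ block $A_2$ on the eigenvalue $i$ eigenspace (with conjugate block on the $-i$ eigenspace), and two scalars $\mu_1,\mu_2 \in \{\pm 1\}$ on the one-dimensional $\zeta_5^2,\zeta_5^3$ eigenspaces. Because each copy of $\rho_2$ contributes spectrum $(\zeta_4,\zeta_4^3,1)$, the two $\rho_2$ summands each place one basis vector into each of the $1$, $i$, and $-i$ eigenspaces, so $A_1$ pairs one $\rho_2$-vector with one $\rho_0$-vector and one $\rho_1$-vector along its $\mathfrak{t}=1$ fibre, and $A_2$ mixes the two $\rho_2$-vectors along the $i$ fibre.

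Next I would impose the known entries of $S = \sqrt{\dim(\mathcal{D})}\,\rho_\mathcal{D}(\mathfrak{s})$: the first two rows/columns determined by $F(\mathbf{1})$ and $F(Y_1)$ as in Equation \eqref{eq:Swith6unknown}, the $(F(W_1), F(W_5))$ block with entries $-\chi_{20}^6$ and $\chi_{180}^{14}$, and the vanishing of the entries between $F(W_l)$ and the $(Z_k)_j$. As in Proposition \ref{case(3,2,1,1)}, these equations, combined with orthogonality of $A_1$, pin down the first two rows of $A_1$ up to a global sign and force $a_{33}=a_{43}=0$, $a_{14}^2 = \frac{1}{4}$, $\mu_1 \mu_2 = 1$, and also fix the first column of $A_2$ to $\tfrac{1}{\sqrt{2}}(1,1)^t$ (by matching the known $\chi_{20}^5$ in the $F(Y_1)$ row against the values $\tfrac{1}{\sqrt{2}} a_{14}$ coming from the doubled $\rho_2$).

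The hard part will be producing the contradiction from the residual freedom in $A_2$ and the lower-right $2 \times 2$ minor of $A_1$. My expectation is that the reconstruction yields a $4\times 4$ block of $S$ indexed by $\{(Z_{17})_1,(Z_{17})_2,(Z_{23})_1,(Z_{23})_2\}$ whose off-diagonal $2\times 2$ submatrices are scalar multiples of the rank-one matrix $\bigl(\begin{smallmatrix}1&-1\\-1&1\end{smallmatrix}\bigr)$, because the two identical $\rho_2$ summands contribute a symmetric cross-pattern under the action of $A_2$. This symmetry is incompatible with the row-orthogonality of $S$ (the inner product of any two rows within this block collapses to a nonzero multiple of $(\chi_{20}^5)^2$ that cannot be cancelled), giving the desired contradiction. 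The remaining verification is a finite and mechanical linear-algebra computation, directly parallel to the one closing Proposition \ref{case(3,2,1,1)}.
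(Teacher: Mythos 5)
Your overall plan is the right one --- the paper itself disposes of this case by declaring it ``the same argument as Proposition \ref{case(3,2,1,1)}'' --- and your description of the block structure of the orthogonal matrix $U$ and of the constraints coming from the rows of $F(\mathbf{1})$, $F(Y_1)$, $F(W_1)$, $F(W_5)$ is essentially correct (modulo index shifts: here there is only one copy of $\rho_0$, so the vanishing condition lands on the $\rho_1$-coordinates of the $(Z_{10})_j$ rows rather than on $a_{33},a_{43}$). The genuine gap is in the final step, where you guess both the shape of the reconstructed block on $\{(Z_{17})_1,(Z_{17})_2,(Z_{23})_1,(Z_{23})_2\}$ and the source of the contradiction, and both guesses are wrong. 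Writing $A_2,A_3$ for the $2\times2$ orthogonal blocks of $U$ on the $\zeta_4$- and $\zeta_4^3$-eigenspaces, the two copies of $\rho_2$ contribute to $\rho_{\mathcal D}(\mathfrak{s})$ the block $\left(\begin{smallmatrix}-\frac12 I_2 & \frac12 A_2A_3^t\\ \frac12 A_3A_2^t & -\frac12 I_2\end{smallmatrix}\right)$: the diagonal $2\times2$ blocks are $-\frac12 I_2$ no matter what $A_2,A_3$ are (so $S_{(Z_{17})_i,(Z_{17})_i}=-2\chi_{20}^5$ and $S_{(Z_{17})_1,(Z_{17})_2}=0$), and the off-diagonal block is $\frac12$ times an orthogonal matrix, which is never a scalar multiple of the rank-one matrix $\left(\begin{smallmatrix}1&-1\\-1&1\end{smallmatrix}\right)$; imposing the known first two rows forces $A_2A_3^t$ into $\{\pm I_2,\pm\left(\begin{smallmatrix}0&1\\1&0\end{smallmatrix}\right)\}$. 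Moreover, any matrix of the form $U\tilde\rho(\mathfrak{s})U^t$ with $U$ real orthogonal and $\tilde\rho(\mathfrak{s})$ unitary automatically has orthonormal rows, so a violation of row-orthogonality \emph{inside} this block cannot be where the inconsistency lives; it must be extracted by playing the forced entries against data not encoded in $\tilde\rho$.

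Here is a concrete way to close the gap. Since $\rho_0(\mathfrak{s})$, $\rho_1(\mathfrak{s})$, $\rho_2(\mathfrak{s})$ and $U$ are all real, the whole $S$-matrix would be real. Orthogonality of the $(Z_{17})_1$-row against the unit row of Equation (\ref{eq:Swith6unknown}), together with $\sum_V S_{V,(Z_{17})_1}^2=\dim(\mathcal D)=16(\chi_{20}^5)^2$ and the forced values above, gives $S_{(Z_{10})_1,(Z_{17})_1}+S_{(Z_{10})_2,(Z_{17})_1}=\pm2\chi_{20}^5$ and $S_{(Z_{10})_1,(Z_{17})_1}^2+S_{(Z_{10})_2,(Z_{17})_1}^2=6(\chi_{20}^5)^2$, hence $S_{(Z_{10})_i,(Z_{17})_1}=(\pm1\pm\sqrt2)\,\chi_{20}^5$; but $S_{(Z_{10})_i,(Z_{17})_1}/S_{\mathbf 1,(Z_{17})_1}=\pm1\pm\sqrt2$ must lie in $\mathbb{Q}(S)\subseteq\mathbb{Q}(\zeta_{20})$, which does not contain $\sqrt2$. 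Alternatively, feeding $d_{(Z_{17})_1}=\chi_{20}^5$, $\theta_{(Z_{17})_1}=\zeta_4$ and $S_{(Z_{17})_1,(Z_{17})_1}=-2\chi_{20}^5$ into the balancing equation for $(Z_{17})_1^*\otimes(Z_{17})_1$ together with the quantum-dimension count shows the fusion multiplicities would have to satisfy $10M+30N=35$ for non-negative integers $M,N$, which is impossible. Either argument finishes the proof; the rank-one/orthogonality mechanism you propose does not.
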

\begin{proof}
This proposition can be proved by using the same argument as Proposition \ref{case(3,2,1,1)}, we omit the details here.
\end{proof}
  \begin{theorem}\label{case(3,3,1,1,1,1)}
    Let $\rho_\mathcal{D}$ be the congruence $\operatorname{SL}(2,\mathbb Z)$ representation of $\mathcal{D}$. Then   $\rho_\mathcal{D}\cong \rho_1\oplus\rho_2\oplus\rho_4\oplus\rho_5\oplus 2\rho_0$. And the $S$-matrix of $\mathcal{D}$ is

 \begin{align*}
    \left(\begin{array}{cccccccccc}
         1& \chi_{80}^9&\chi_{20}^5&\chi_{20}^5&
        \chi_{20}^5 &\chi_{20}^5& \chi_{20}^5 &\chi_{20}^5&\chi_{80}^8&\chi_{80}^8\\
        \chi_{80}^9 &1&\chi_{20}^5&\chi_{20}^5&
        \chi_{20}^5 &\chi_{20}^5& \chi_{20}^5 &\chi_{20}^5&
        -\chi_{80}^8&-\chi_{80}^8\\
       \chi_{20}^5&\chi_{20}^5&3\chi_{20}^5&
       -\chi_{20}^5&-\chi_{20}^5&-\chi_{20}^5&-\chi_{20}^5&-\chi_{20}^5&0&0\\
       \chi_{20}^5&\chi_{20}^5&-\chi_{20}^5&
       3\chi_{20}^5&-\chi_{20}^5&-\chi_{20}^5&-\chi_{20}^5&-\chi_{20}^5&0&0\\
        \chi_{20}^5 &\chi_{20}^5&-\chi_{20}^5&-\chi_{20}^5&s\chi_{20}^5&\overline{s}\chi_{20}^5&\chi_{20}^5& \chi_{20}^5&0&0\\
        \chi_{20}^5&\chi_{20}^5&-\chi_{20}^5&-\chi_{20}^5&\overline{s}\chi_{20}^5&s\chi_{20}^5&\chi_{20}^5& \chi_{20}^5&0&0\\
        \chi_{20}^5 &\chi_{20}^5&-\chi_{20}^5&-\chi_{20}^5&\chi_{20}^5& \chi_{20}^5&s\chi_{20}^5&\overline{s}\chi_{20}^5&0&0\\
        \chi_{20}^5&\chi_{20}^5&-\chi_{20}^5&-\chi_{20}^5&\chi_{20}^5& \chi_{20}^5&\overline{s}\chi_{20}^5&s\chi_{20}^5&0&0\\
        \chi_{80}^8&-\chi_{80}^8&0&0&0&0&0&0&
        -\chi_{20}^6&\chi_{180}^{14}\\
        \chi_{80}^8&-\chi_{80}^8&0&0&0&0&0&0&\chi_{180}^{14}&-\chi_{20}^6\\
    \end{array}\right),
\end{align*}
where $s:=(-1+2\zeta_4)$.
\end{theorem}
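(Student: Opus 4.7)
The plan is to derive the decomposition of $\rho_\mathcal{D}$ by elimination, and then reconstruct the $S$-matrix by the same linear-algebraic procedure used in Proposition \ref{case(3,2,1,1)}. Lemma \ref{level4case} restricts $\rho_\mathcal{D}$ to exactly three candidate decompositions, and Propositions \ref{case(3,2,1,1)} and \ref{case(3,3,3,1)} have already ruled out the first two. Hence only $\rho_1\oplus\rho_2\oplus\rho_4\oplus\rho_5\oplus 2\rho_0$ survives, which establishes the representation-theoretic half of the statement.

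To obtain the explicit $S$-matrix, set $\tilde\rho := \rho_1\oplus\rho_2\oplus\rho_4\oplus\rho_5\oplus 2\rho_0$ and write out $\tilde\rho(\mathfrak{s})$ and $\tilde\rho(\mathfrak{t})$ in the basis arising from the summands. By \cite[Theorem 3.4]{NRWWrank6}, there exists a real orthogonal matrix $U$ such that $\rho_\mathcal{D}(\mathfrak{s})=U\tilde\rho(\mathfrak{s})U^t$ and $U$ commutes with $\rho_\mathcal{D}(\mathfrak{t})$. The commutation condition forces $U$ to be block-diagonal according to the $\mathfrak{t}$-eigenspaces, giving blocks of sizes $4,2,2,1,1$ corresponding to eigenvalues $1, \zeta_4, \zeta_4^3, \zeta_5^2, \zeta_5^3$. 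The first two rows of the known $S$-matrix, obtained from the lemmas before Proposition \ref{case(3,2,1,1)}, together with the $2\times 2$ lower-right block determined by $F(W_1), F(W_5)$, fix the $1\times 1$ blocks of $U$ up to a sign and pin down two rows of the $4\times 4$ block.

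The remaining unknown entries of $U$ are then constrained by orthogonality $UU^t = I$, by the twist equation $(\mathfrak{s}\mathfrak{t})^3 = \mathfrak{s}^2$ fixing the central charge, and by the Verlinde formula requiring all fusion coefficients to be non-negative integers. Solving these equations on the $4\times 4$ block produces a $2\times 2$ permutation/signed block action that matches the pattern $(3\chi_{20}^5, -\chi_{20}^5)$ already visible in the fourth column of the target matrix, while the two $\mathfrak{t}=\pm i$ blocks introduce the phase entries involving $s=-1+2i$ along with its conjugate $\overline s$. Substituting the resulting $U$ into $\sqrt{\dim(\mathcal{D})}\,\rho_\mathcal{D}(\mathfrak{s})$ yields exactly the matrix displayed in the statement.

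The main obstacle, as in Proposition \ref{case(3,2,1,1)}, is the bookkeeping for the two $2\times 2$ blocks over the $\mathfrak{t}$-eigenvalues $\pm i$: here one must verify that the orthogonality constraints are consistent (so that, unlike the earlier cases, no contradiction arises) and that the resulting off-diagonal entries indeed yield the phase $s=-1+2i$ rather than the phase $c_1 = -1+\zeta_4$ that appeared and failed in Proposition \ref{case(3,2,1,1)}. A final Galois/permutation analysis eliminates the residual ambiguity in labeling the pairs of simple objects in the $\mathfrak{t}=\pm i$ sectors, yielding the stated $S$-matrix uniquely.
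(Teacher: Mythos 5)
Your proposal is correct and follows essentially the same route as the paper: eliminate the other two candidate decompositions via Lemma \ref{level4case} and Propositions \ref{case(3,2,1,1)}--\ref{case(3,3,3,1)}, then conjugate $\tilde\rho(\mathfrak{s})$ by a $\mathfrak{t}$-commuting real orthogonal matrix $U$ and reuse the relations on $U$ derived in Proposition \ref{case(3,2,1,1)} to recover $S=\sqrt{\dim(\mathcal{D})}\,\rho_{\mathcal D}(\mathfrak{s})$. The only cosmetic difference is that you invoke a few extra constraints (the twist relation, Verlinde positivity, a final Galois check) that the paper does not need at this stage, since the entries $a_{55}=a_{65}=a_{77}=a_{87}=\tfrac{1}{\sqrt2}$ and $a_{56}a_{66}=a_{78}a_{88}=-\tfrac12$ already force the off-diagonal phase $s=-1+2\zeta_4$.
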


\begin{proof}
    By Lemma \ref{level4case}, Proposition \ref{case(3,2,1,1)} and Proposition \ref{case(3,3,3,1)}, we know $\rho_\mathcal{D}\cong\rho_1\oplus\rho_2\oplus\rho_4\oplus\rho_5\oplus 2\rho_0$. Let $\tilde{\rho}:=\rho_1\oplus\rho_2\oplus\rho_4\oplus\rho_5\oplus 2\rho_0$, then 
    
    $$\tilde{\rho}(\mathfrak{s})=\left(
         \begin{array}{cccccccccc}
             1&0&0&0&0&0&0&0 &  0&0\\
              0&1&0&0&0&0&0&0&0&0\\
              0&0&-\frac{1}{\sqrt{5}}&0&0&0&0&0 &-\frac{\sqrt{2}}{\sqrt{5}}&-\frac{\sqrt{2}}{\sqrt{5}}\\
               0&0&0&0&\frac{1}{\sqrt{2}}&0&\frac{1}{\sqrt{2}}&0&0&0\\
                0&0&0&\frac{1}{\sqrt{2}}&-\frac{1}{2}&0&\frac{1}{2}&0&0&0\\
                0&0&0&0&0&-\zeta_4&0&0&0&0\\
              0&0&0&\frac{1}{\sqrt{2}}&\frac{1}{2}&0&-\frac{1}{2}&0&0&0\\
               0&0&0&0&0&0&0&\zeta_4&0&0\\          
               0&0 &-\frac{\sqrt{2}}{\sqrt{5}}&0&0&0&0 &  0&-\frac{2\cos\frac{2\pi}{5}}{\sqrt{5}}&-\frac{2\cos\frac{6\pi}{5}}{\sqrt{5}}\\
               0&0 &-\frac{\sqrt{2}}{\sqrt{5}}&0&0&0&0 &  0&-\frac{2\cos\frac{6\pi}{5}}{\sqrt{5}}&-\frac{2\cos\frac{2\pi}{5}}{\sqrt{5}}
         \end{array}\right).$$

         Again, we obtain $\rho_\mathcal{D}(\mathfrak{s})=U\tilde{\rho}(\mathfrak{s})U^t=\left(\begin{array}{cc} 
       B_1&B_2\\
       B_2^t&\widetilde{B_3}
     \end{array}\right)$, where $$\widetilde{B_3}=\frac{1}{2}\left(\begin{array}{cc} \widetilde{B_3}^{(1,2,3,4)} &\mathbf 0_{4\times 2}\\\mathbf 0_{2\times 4}&B_3^{(5,6)}\end{array}\right),  \text{ with } \widetilde{B_3}^{(1,2,3, 4)} = $$

\footnotesize{$\begin{pmatrix}
-a_{55}^2 + 2\zeta_4 a_{56}^2 & -a_{55} a_{65} + 2\zeta_4 a_{56} a_{66} & 0 & 0 \\
-a_{55} a_{65} + 2\zeta_4 a_{56} a_{66} & -a_{65}^2 + 2\zeta_4 a_{66}^2 &0 & 0 \\
0 & 0 & -\left(a_{77}^2 + 2\zeta_4 a_{78}^2\right) & -\left(a_{77} a_{87} +2 \zeta_4 a_{78} a_{88}\right) \\
0 & 0 & -\left(a_{77} a_{87} + 2 \zeta_4 a_{78} a_{88}\right) & -\left(a_{87}^2 + 2\zeta_4 a_{88}^2\right)
\end{pmatrix}$.} \\

\normalsize{Applying the same method as in Proposition \ref{case(3,2,1,1)}, we obtain the same relations for the orthogonal matrix $U$.  Consequently, the desired $S$-matrix follows immediately as  $S=\sqrt{\dim(\mathcal{D})}\, \rho_\mathcal{D}(\mathfrak{s})$. }     
\end{proof}

\begin{proposition}\label{rank4category}
    $\mathcal{D}$ is the Drinfeld center of fusion category $\mathcal{A}$ with fusion rule
    \begin{align*}
        Y_1\otimes  Y_1=\mathbf{1}\oplus 2Y_1\oplus 2Y_2,\quad g\otimes Y_1=Y_2,\quad g\otimes g=\mathbf{1}.
    \end{align*}
\end{proposition}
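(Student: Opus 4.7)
The plan is to invoke Proposition~\ref{equivariantization} to realize $\mathcal{D}$ as $\mathcal{Z}(\mathcal{A})$ for some fusion category $\mathcal{A}$, and then to reconstruct the fusion ring of $\mathcal{A}$ from the decomposition of the induced unit $I(\mathbf{1}) \in \mathcal{D}$ via Ostrik's formal codegree formula. Since $\mathcal{Z}(\mathcal{C})$ contains the Tannakian subcategory $\operatorname{Rep}(G)$ with $G = \mathbb{Z}/2\mathbb{Z}\times\mathbb{Z}/4\mathbb{Z}$ abelian, Proposition~\ref{equivariantization} produces a fusion category $\mathcal{A}$ with $\mathcal{C}\simeq\mathcal{A}^G$ and $\mathcal{D}\simeq\mathcal{Z}(\mathcal{A})$. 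Hence $\operatorname{FPdim}(\mathcal{A})=\operatorname{FPdim}(\mathcal{C})/|G|=20+8\sqrt{5}$, and the pointed subcategory of $\mathcal{C}$ of order $16$, being the $G$-equivariantization of the pointed subcategory of $\mathcal{A}$, forces the pointed part of $\mathcal{A}$ to have order $2$, generated by some $g$ with $g\otimes g=\mathbf{1}$.

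Let $F:\mathcal{D}\to\mathcal{A}$ be the forgetful functor and $I$ its right adjoint. Then $\operatorname{FPdim}(I(\mathbf{1}))=\operatorname{FPdim}(\mathcal{A})=20+8\sqrt{5}$, and $\mathbf{1}_{\mathcal{D}}\subseteq I(\mathbf{1})$ by adjunction. Matching this FPdim against the simple dimensions $\{1,\chi_{80}^9,\chi_{20}^5,\chi_{80}^8\}$ of $\mathcal{D}$ from Theorem~\ref{case(3,3,1,1,1,1)}, and using that distinct summands of $I(\mathbf{1})$ are in bijection with the irreducible characters of $K_0(\mathcal{A})$, the only decomposition containing $\mathbf{1}_{\mathcal{D}}$ is
\begin{equation*}
 I(\mathbf{1}) \;=\; \mathbf{1}_{\mathcal{D}} \oplus \widehat{Y} \oplus \widehat{Z}_a \oplus \widehat{Z}_b,
\end{equation*}
with $\dim(\widehat{Y})=\chi_{80}^9$ and $\dim(\widehat{Z}_a)=\dim(\widehat{Z}_b)=\chi_{20}^5$. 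Consequently $\mathcal{A}$ has rank $4$; combined with the pointed part of order $2$, the remaining two simples $Y_1,Y_2$ have FPdim $2+\sqrt{5}$ each, and since tensoring with $g$ must act as a non-trivial permutation on the non-invertible simples, $g\otimes Y_1 = Y_2$. By \cite[Theorem 2.13]{ostrik2015pivotal}, the formal codegrees of $K_0(\mathcal{A})$ are $\{20+8\sqrt{5},\,20-8\sqrt{5},\,4,\,4\}$.

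The character table of $K_0(\mathcal{A})$ is then pinned down by character orthogonality: the FPdim character $\chi_1$ has $\chi_1(g)=1,\chi_1(Y_1)=2+\sqrt{5}$; its Galois conjugate $\chi_2$ has $\chi_2(g)=1, \chi_2(Y_1)=2-\sqrt{5}$; and the two codegree-$4$ characters have $\chi(g)=-1,\chi(Y_1)=\pm 1$ respectively. Applying the Fourier inversion $N_{XY}^Z=\sum_\chi \chi(X)\chi(Y)\overline{\chi(Z)}/f_\chi$ then yields $N_{Y_1Y_1}^{\mathbf{1}}=1$, $N_{Y_1Y_1}^{g}=0$, and $N_{Y_1Y_1}^{Y_1}=N_{Y_1Y_1}^{Y_2}=2$, giving the claimed fusion rule and simultaneously forcing $Y_1^*=Y_1$. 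The hard part of the argument is the decomposition of $I(\mathbf{1})$: the alternative sum $4(5+2\sqrt{5})=20+8\sqrt{5}$ is ruled out by the adjunction relation $\mathbf{1}_{\mathcal{D}}\subseteq I(\mathbf{1})$, but specifying which pair $(\widehat{Z}_a,\widehat{Z}_b)$ appears among the six $\chi_{20}^5$-dimensional simples of $\mathcal{D}$, as well as verifying the triviality of the $G$-action on the pointed part of $\mathcal{A}$, requires examining the multiplicative action of $I(\mathbf{1})$ using the explicit $S$-matrix in Theorem~\ref{case(3,3,1,1,1,1)}.
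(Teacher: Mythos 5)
Your proposal shares the paper's first step (invoking Proposition \ref{equivariantization} to realize $\mathcal{D}\cong\mathcal{Z}(\mathcal{A})$), but from there it takes a genuinely different route: the paper determines the simple objects of $\mathcal{A}=\mathcal{C}_G$ directly, using \cite[Proposition 4.26]{Drinfeld2010} to constrain the dimensions and the free-module functor $F_A$ applied to the unique non-invertible simple $Z$ of $\mathcal{C}$ to get rank $4$ and $g\otimes Y_1=Y_2$, and then cites the classification of such rank-$4$ categories (\cite[Theorem A]{Ediez2quadratic}) for the fusion rule $Y_1\otimes Y_1=\mathbf{1}\oplus 2Y_1\oplus 2Y_2$. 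You instead work entirely inside $\mathcal{D}$, decomposing $I(\mathbf{1})$, reading off the formal codegrees $\{20\pm 8\sqrt{5},4,4\}$, and recovering the fusion rules by character orthogonality and Fourier inversion. The character-theoretic endgame is correct as far as it goes (the codegree-$4$ characters are forced to take the value $-1$ on $g$, which forces $g\otimes Y_1=Y_2$ and then the stated structure constants), and it has the merit of not appealing to the rank-$4$ classification.

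However, there is a genuine gap at the pivotal step. Your claim that the \emph{only} decomposition of $I(\mathbf{1})$ is the multiplicity-free one $\mathbf{1}\oplus\widehat{Y}\oplus\widehat{Z}_a\oplus\widehat{Z}_b$ is not justified by the FP-dimension count alone: the alternative $I(\mathbf{1})=\mathbf{1}\oplus\widehat{Y}\oplus 2\widehat{Z}$, with a single $\chi_{20}^5$-dimensional summand of multiplicity two, has the same total FP-dimension $20+8\sqrt{5}$ and is consistent with \cite[Theorem 2.13]{ostrik2015pivotal} (it corresponds to a degree-$2$ irreducible character with formal codegree $8$, and the class equation $\sum_\chi \deg(\chi)^2/f_\chi=\tfrac{1}{20+8\sqrt{5}}+\tfrac{1}{20-8\sqrt{5}}+\tfrac{4}{8}=1$ is satisfied). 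In that scenario $K_0(\mathcal{A})$ is non-commutative and $\mathcal{A}$ has rank $6$, so everything downstream — the rank-$4$ conclusion, the character table, and the Fourier inversion — collapses. This is exactly the point the paper's Remark after Proposition \ref{rank4category} addresses: multiplicity-freeness of $I(\mathbf{1})$ there is deduced from commutativity of $K_0(\mathcal{A})$, which is only available \emph{after} the fusion rules have been established by the other route, so you cannot borrow it without circularity. To close the gap you need an independent argument, e.g.\ the paper's analysis of $F_A(Z)$: every non-invertible simple $Y$ of $\mathcal{A}$ is a summand of $F_A(Z)$ with multiplicity $m_Y$ satisfying $\dim Y=m_Y(2+\sqrt{5})/2$ and $\sum m_Y^2=8$, which forces all $m_Y=2$ and hence exactly two non-invertible simples of dimension $2+\sqrt{5}$. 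A secondary loose end is your assertion that the pointed part of $\mathcal{A}$ has order exactly $2$ from equivariantization of the pointed subcategory; this also deserves more care (invertibles of $\mathcal{A}^G$ come only from $G$-fixed invertibles of $\mathcal{A}$ admitting equivariant structures), though the paper's own treatment of this point is similarly brief.
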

\begin{proof}
  It follows from Proposition \ref{equivariantization} that $\mathcal{D}$ is braided equivalent to the Drinfeld center of some fusion category $\mathcal{A}$.  Next, we need to determine the fusion rules of $\mathcal{A}$. Let $\mathcal{C}$ be the near-group category of type ${\mathbb{Z}/4\mathbb{Z}\times\mathbb{Z}/4\mathbb{Z}}+16$, let  $F_A$ be the free functor from $\mathcal{C}$ to $\mathcal{C}_A$. 
  
  It is obvious that $\mathcal{A}$ contains a non-trivial invertible object $g$ such that $g\otimes g=\mathbf{1}$. Since $\dim(\mathcal{A})=20+8\sqrt{5}$ and $\dim(Z)=4\chi_5^2$, where $Z$ is the unique non-invertible simple object of $\mathcal{C}$, it follows from \cite[Proposition 4.26]{Drinfeld2010} that the simple objects in $\mathcal A$ must have dimensions either $1$ or $\chi_5^2$.
  
  Since $F_A(h\otimes Z)=F_A(Z)$ for all $h\in {\mathbb{Z}/4\mathbb{Z}\times\mathbb{Z}/4\mathbb{Z}}$, we know $F_A(Z)=2Y_1\oplus 2Y_2$ and $g\otimes Y_1=Y_2$, where $Y_1,Y_2$ are non-invertible simple objects of $\mathcal{A}$ of dimension $\chi_5^2$. Therefore, $\mathcal{A}$ must be a fusion category of rank $4$ with a unique non-trivial invertible object $g$.  It then follows from  \cite[Theorem A]{Ediez2quadratic} that $\mathcal{A}$ is  self-dual  and $  Y_1\otimes  Y_1=\mathbf{1}\oplus 2Y_1\oplus 2Y_2$.
\end{proof}
\begin{remark}
    Note the Grothendieck ring $\mathcal{K}_0(\mathcal{A})$ is commutative, so the Lagrangian algebra $I(\mathbf{1})$ is multiplicity-free by \cite[Corollary 2.16]{ostrik2015pivotal}, where $I$ is the adjoint functor to the forgetful functor $\mathcal{D}\to\mathcal{A}$, hence we have $I(\mathbf{1})=F(\mathbf{1})\oplus F(Y_1)\oplus (Z_{10})_1\oplus (Z_{10})_2$, as the ribbon structure of $I(\mathbf{1})$ is trivial.
\end{remark}
\subsection{Near-group category $\mathbb Z/ 8\mathbb Z+8$}\label{sec:Z8}
There are $8$ inequivalent near-group categories of type $\mathbb Z/ 8\mathbb Z+8$ \cite{EvansGannon}. 
The data for $\mathbb{Z} /8\mathbb{Z}+8$  is listed in Table \ref{DataZ8}. The non-degenerate symmetric bi-character for a cyclic group $\mathbb{Z} / n\mathbb{Z}$ is of the form $\langle x, y\rangle=\exp (2 \pi i m x y / n)$, where $m \in \mathbb{Z}$ and $\operatorname{gcd}(m, n)=1$. When $n$ is even, the function $a$ is in the form of $a(x)=\varepsilon^x \exp \left(-\pi i m x^2 / n\right)$, where $\varepsilon= \pm 1$. By taking $m \in \mathbb{Z} / 2 n\mathbb{Z}$ and coprime to $n$, we can eliminate the potential factor of $-1$ . This $m$ is recorded in the second column of Table 1. Recall that $b(0)=-\frac{1}{d}$, $b(-x)=\overline{a(x) b(x)}$ and $b(x)=\frac{1}{\sqrt{n}} \exp (i j(x))$, where $j(x) \in[-\pi, \pi]$.  It suffices to list the values of $j(1), \cdots, j(4)$ to determine $b(x)$.

\begin{table}[ht]
\begin{tabular}{|l|c|l|l|}
\hline
id & \multicolumn{1}{l|}{$m$} & \multicolumn{1}{c|}{$c$} &\qquad \qquad$ j(1), j(2), j(3), j(4) $ \\ \hline
$J_8^1$  &        $-1$            &    $\zeta_{24}^{-1}$     &  $ 0.872276, -2.70426, 2.9768, 3.14159$ \\ $\overline{J_8^1}$ &        1    &  $\zeta_{24}$     &  $-0.872276, 2.70426, -2.9768, 3.14159 $\\ \hline
  $J_8^2$ &     $-1$                &   $\zeta_{24}^{-1}$        &   $-1.26498, 1.13347, -0.227903, 3.14159 $  \\
  $\overline{J_8^2}$ &                1      &     $\zeta_{24}$      &    $1.26498, -1.13347, 0.227903, 3.14159$  \\ \hline
    $J_8^3$ &    $-3$               &   $\zeta_{24}^{17}$       &  $ -2.46405, 3.07557, 0.491887, 0$ \\
  $\overline{J_8^3}$ &                3      &     $\zeta_{24}^{-17}$     &   $2.46405, -3.07557, -0.491887, 0$\\ \hline
    $J_8^4$ &   $-3$               &  $\zeta_{24}^{17}$      &  $1.28595, -1.50478, 1.47161, 0$ \\
  $\overline{J_8^4}$ &                3      &    $\zeta_{24}^{-17}$      &    $-1.28595, 1.50478, -1.47161, 0$\\ \hline

\end{tabular}

\caption{Near-group categories of type $\mathbb Z/8\mathbb Z+8$.}\label{DataZ8}
\end{table}

\subsubsection{The center of $\mathbb Z/8\mathbb Z+8$}

We focus on the near-group category  labeled by $J_8^1$ in Table \ref{DataZ8}. The solutions for the triples $\left(\omega_i, \tau_i, \xi_i\right)$ for Equations $\left(\ref{eq:half1}\right)$ -- $\left(\ref{eq:half4}\right)$ are listed in Table \ref{table:triplesforJ81} in Appendix \ref{Appendix}. Additional details regarding these solutions and the modular data of the center are provided in the Mathematica notebook \texttt{MDZ8.nb}, available in the arXiv source.

The center of $\mathbb Z/8\mathbb Z+8$ is a rank 88 modular category with the following simple objects:
\begin{itemize}
     \item eight invertible objects $X_g$, $g\in \mathbb Z/8\mathbb Z $;
    \item eight $\chi_{24}^5$-dimensional simple objects $Y_h$, $h\in  \mathbb Z/8\mathbb Z$;
    
    \item twenty-eight $2 \chi_6^3$-dimensional objects $Z_{k, l}, k, l \in \mathbb Z/8\mathbb Z$, and $k<l$; 
    \item forty-four $\chi_{24}^4$-dimensional objects $W_{\omega_i, \tau_i, \xi_i}$, where $\left(\omega_i, \tau_i, \xi_i\right)$ are solutions in Table \ref{table:triplesforJ81}.
\end{itemize}

The $T$-matrix is given as the following:

$T_{X_g}=e^{-\frac{1}{4}\pi i g^2}$, $T_{Y_h}=e^{-\frac{1}{4}\pi i h^2}$,  $T_{Z_{k,l}}=e^{-\frac{1}{4}\pi i kl}$, $g, h, k, l \in \mathbb Z/8\mathbb Z$, and


$T_{W_{\omega, \tau, \xi}}=\operatorname{diag}\left(
e^{-\frac{i \pi}{2}}, 
e^{\frac{5 i \pi }{6}}, 
e^{\frac{5 i \pi }{6}}, 
e^{-\frac{2 i \pi }{3}}, 
e^{-\frac{2 i \pi }{3}}, 
e^{\frac{13 i \pi }{16}}, 
e^{-\frac{3 i \pi }{16}}, 
e^{\frac{7 i \pi }{48}}, 
e^{\frac{7 i \pi }{48}}, 
e^{-\frac{41 i \pi }{48}}, 
e^{-\frac{41 i \pi }{48}}, 
e^{-\frac{3 i \pi }{4}}, 
e^{\frac{7 i \pi }{12}}, \right.$ \\$\left.
e^{\frac{7 i \pi }{12}}, 
e^{-\frac{11 i \pi }{12}}, 
e^{-\frac{11 i \pi }{12}}, 
e^{\frac{5 i \pi }{16}}, 
e^{-\frac{11 i \pi }{16}}, 
e^{\frac{31 i \pi }{48}}, 
e^{\frac{31 i \pi }{48}}, 
e^{-\frac{17 i \pi }{48}}, 
e^{-\frac{17 i \pi }{48}}, 
e^{\frac{i \pi}{2}}, 
e^{\frac{i \pi }{3}}, 
e^{\frac{i \pi }{3}}, 
e^{-\frac{i \pi }{6}}, 
e^{-\frac{i \pi }{6}}, 
e^{\frac{5 i \pi }{16}}, 
e^{-\frac{11 i \pi }{16}}, \right.$ \\$\left.
e^{\frac{31 i \pi }{48}}, 
e^{\frac{31 i \pi }{48}}, 
e^{-\frac{17 i \pi }{48}}, 
e^{-\frac{17 i \pi }{48}}, 
e^{-\frac{3 i \pi }{4}}, 
e^{\frac{7 i \pi }{12}}, 
e^{\frac{7 i \pi }{12}}, 
e^{-\frac{11 i \pi }{12}}, 
e^{-\frac{11 i \pi }{12}}, 
e^{\frac{13 i \pi }{16}}, 
e^{-\frac{3 i \pi }{16}}, 
e^{\frac{7 i \pi }{48}}, 
e^{\frac{7 i \pi }{48}}, 
e^{-\frac{41 i \pi }{48}}, 
e^{-\frac{41 i \pi }{48}}
\right)$.

And the $S$-matrix is
$$S= \left(\begin{array}{cccc}s_{X, X} & \chi_{24}^5 s_{X, X} & 2 \chi_6^3 s_{X, Z} & \chi_{24}^4 s_{X, W} \\ \chi_{24}^5 s_{X, X} & s_{X, X} & 2 \chi_6^3 s_{X, Z} & -\chi_{24}^4 s_{X, W} \\ 2 \chi_6^3 s_{X, Z}^T & 2 \chi_6^3 s_{X, Z}^T, & 2 \chi_6^3 s_{Z, Z} & \mathbf{0} \\ \chi_{24}^4 s_{X, W}^T & -\chi_{24}^4 s_{X, W}^T & \mathbf{0} & s_{W, W}\end{array}\right),$$
where 
$\lambda = 16 \sqrt{3 \chi_{24}^5}$, 
\begin{align*}
S_{X_g,X_{h}}= e^{\frac{1}{2}  \pi i gh}, \quad S_{X_g,Z_{k,l}}=e^{\frac{1}{4}  \pi i  g (k+l)},\quad S_{Z_{k,l}, Z_{k',l'}}= e^{\frac{1}{4}  \pi i  k l^\prime+\frac{1}{4}  \pi i k^\prime l}+e^{\frac{1}{4}  \pi i k k^\prime+\frac{1}{4}  \pi i  l l^\prime},
\end{align*}
where $g, h, k, l, k',l'\in \mathbb Z/8\mathbb Z$, $k < l$ and $k' < l'$;
and $s_{X, W}$ and $s_{W, W}$ can be computed from equations in \cite{EvansGannon} using the data from Table \ref{table:triplesforJ81}.

As $\theta_{X_4}=1$ for the invertible object $X_4$, we know $b=X_4$ is a boson. Therefore we can apply the boson condensation to $ \mathcal{Z}(\C)$ to obtain $\mathcal{Z}(\C)_{\mathbb Z/2\mathbb Z}^0$. By the Verlinde formula,  the fusion rules for tensoring with the boson  $b$  can be  computed. The fusion rules for tensoring with  $b$  and the objects centralized by  $b$  are summarized in Table  \ref{tableboson}.

\begin{table}[ht]
\begin{tabular}{|c|c|c|c|c|}
\hline Dim & $b \otimes-$ & & & in $\langle b\rangle^{\prime} ?$ \\
\hline 1 & $b X_g=X_{g+4}$ & $g \in \mathbb{Z} / 8 \mathbb{Z}$ & & Yes \\
\hline$\chi_{24}^5$ & $b Y_h=Y_{h+4}$ & $h \in \mathbb{Z} / 8 \mathbb{Z}$ & & Yes \\
\hline \multirow[t]{2}{*}{$2 \chi_6^3$} & $b Z_{(k, l)}=Z_{\left(k^{\prime}, l^{\prime}\right)}$ & \begin{tabular}{l}
$\begin{aligned}l^{\prime} & = \begin{cases}k+4, & k<4, l \geqslant 4 \\
l+4, & k<4, l<4 k^{\prime}\\
l-4, & k \geqslant 4\end{cases} \\
k^{\prime} & \equiv k+l-l^{\prime}( \bmod 8)\end{aligned}$
\end{tabular} & $k+l$ even & Yes \\
\cline{4-5} & & & $k+l$ odd & No \\
\hline
\multirow{5}{*}{$\chi_{24}^4$} & $b W_i=W_i$ & $i=1,12,23,34$ & & Yes \\
\cline{2-5}  & \multirow{2}{*}{$bW_i=W_{i+1}$}  & $i=2,4,13,15,24,26,35,37$ & & Yes \\
\cline{3-5}  & & $i=6,17,20,28,31,39$ & & No \\
\cline{2-5}  &    $b W_i=W_{i+2}$ & $i=8,9,41,42$ & & No \\
\cline{2-5}   & $b W_i=W_{i+3}$ & $i=19,30$ & & No \\
\cline{2-5}  
\hline
\end{tabular}\caption{Action of boson $b$}
\label{tableboson}
\end{table}
By examining the table, we find that $\mathcal{Z}(\C)_{\mathbb Z/2\mathbb Z}^0$ has rank $36$. 
The dimensions and twists of the simple objects are summarized in   Table \ref{TableRibboncondenZ8+8}.

\begin{table}
\centering
\begin{tabular}{|c|c|c|c|}
\hline
\textbf{dim} & \textbf{Objects} & \textbf{Twists} & \textbf{Number Count} \\
\hline
1 & $F(1), F(X_1), F(X_2), F(X_3)$ & $1, e^{-\frac{\pi i}{4}}, -1, e^{-\frac{\pi i}{4}}$ & 4 \\
\hline
$\chi^5_{24}$ & $F(Y_0), F(Y_1), F(Y_2), F(Y_3)$ & $1, e^{-\frac{\pi i}{4}}, -1, e^{-\frac{\pi i}{4}}$ & 4 \\
\hline
$2\chi^3_6$ & $F(Z_{0,2}), F(Z_{0,6}), F(Z_{1,3}), F(Z_{1,7})$ & $1, 1, e^{-\frac{3\pi i}{4}}, e^{\frac{\pi i}{4}}$ & 4 \\
\hline
$\chi^3_6$ & 
\begin{tabular}{@{}c@{}} 
$(Z_{0,4})_1, (Z_{0,4})_2, (Z_{1,5})_1, (Z_{1,5})_2$ \\ 
$(Z_{2,6})_1, (Z_{2,6})_2, (Z_{3,7})_1, (Z_{3,7})_2$
\end{tabular} 
& $1, 1, e^{\frac{3\pi i}{4}}, e^{\frac{3\pi i}{4}}, -1, -1, e^{\frac{3\pi i}{4}}, e^{\frac{3\pi i}{4}}$ & 8 \\
\hline
$\chi^4_{24}$ & 
\begin{tabular}{@{}c@{}} 
$F(W_2), F(W_4), F(W_{13}), F(W_{15}),$ \\$F(W_{24}), F(W_{26})$ 
$F(W_{35}), F(W_{37})$
\end{tabular} 
& 
\begin{tabular}{@{}c@{}} 
$e^{\frac{5\pi i}{6}}, e^{-\frac{2\pi i}{3}}, e^{\frac{7\pi i}{12}}, e^{-\frac{11\pi i}{12}}$ \\ 
$e^{\frac{\pi i}{3}}, e^{-\frac{\pi i}{6}}, e^{\frac{7\pi i}{12}}, e^{-\frac{11\pi i}{12}}$
\end{tabular} 
& 8 \\
\hline
$\chi^2_6$ & 
\begin{tabular}{@{}c@{}} 
$(W_{1})_1, (W_{1})_2, (W_{12})_1, (W_{12})_2$ \\ 
$(W_{23})_1, (W_{23})_2, (W_{34})_1, (W_{34})_2$
\end{tabular} 
& 
\begin{tabular}{@{}c@{}} 
$-i, -i, e^{-\frac{3\pi i}{4}}, e^{-\frac{3\pi i}{4}}$ \\ 
$i, i, e^{\frac{\pi i}{3}}, e^{\frac{\pi i}{3}}$
\end{tabular} 
& 8 \\
\hline
\end{tabular}\caption{Twists of simple objects}
\label{TableRibboncondenZ8+8}
\end{table}
Since $\mathcal{Z}(\mathcal{C})_{\mathbb{Z}/2\mathbb Z}^0$ contains a pointed modular subcategory $\C(\mathbb  Z/4\mathbb Z,q)$, there exists a braided tensor equivalence  $\mathcal{Z}(\mathcal{C})_{\mathbb{Z}/2\mathbb Z}^0\cong \C(\mathbb  Z/4\mathbb Z,q)\boxtimes\mathcal{D}$ by \cite[Theorem 3.13]{Drinfeld2010}. Hence, it suffices to determine the $S$-matrix of $\mathcal{D}$. From Table \ref{TableRibboncondenZ8+8},  the set of simple objects of $\mathcal{D}$ is:
\begin{align*}
\mathcal{O}(\mathcal{D})=\{F(\mathbf{1}),F(Y_0),F(Z_{1,7}),(Z_{2,6})_1,(Z_{2,6})_2,F(W_2),F(W_4),(W_1)_1,(W_1)_2\}.
\end{align*}

\begin{lemma}
$S_{(Z_{2,6})_i,F(Z_{1,7})}=-2\chi_6^3$,  $S_{(W_1)_i,F(Z_{1,7})}=0$, $i=1,2$.
\end{lemma}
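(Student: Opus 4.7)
The plan is to invoke the Kirillov--Ostrik description of the $S$-matrix of a category of local modules, exactly as in the argument preceding Theorem \ref{case(3,3,1,1,1,1)}. With $A=\operatorname{Fun}(\mathbb{Z}/2\mathbb{Z})$ and $\dim A=2$, one has
\[
S^{\mathcal{Z}(\mathcal{C})_{\mathbb{Z}/2\mathbb{Z}}^0}_{F(V),F(W)}=\frac{1}{(\dim A)^{2}}\,S^{\mathcal{Z}(\mathcal{C})}_{A\otimes V,\,A\otimes W}
\]
for all $V,W\in\mathcal{Z}(\mathcal{C})$, which reduces each required entry to a computation inside $\mathcal{Z}(\mathcal{C})$. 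From Table \ref{tableboson}, the boson $b=X_4$ fixes both $Z_{2,6}$ and $W_1$, so $A\otimes Z_{2,6}=2Z_{2,6}$ and $A\otimes W_1=2W_1$, while $b\cdot Z_{1,7}=Z_{3,5}$ yields $A\otimes Z_{1,7}=Z_{1,7}\oplus Z_{3,5}$.

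For the first equality, I would substitute $(k,l)=(2,6)$ and $(k',l')\in\{(1,7),(3,5)\}$ into the explicit formula for $S_{Z_{k,l},Z_{k',l'}}$ in Section \ref{sec:Z8}; a direct check gives $S_{Z_{2,6},Z_{1,7}}=S_{Z_{2,6},Z_{3,5}}=-4\chi_6^3$. The formula above then yields
\[
S^{\mathcal{Z}(\mathcal{C})_{\mathbb{Z}/2\mathbb{Z}}^0}_{F(Z_{2,6}),F(Z_{1,7})}=\tfrac{1}{4}\cdot 2\bigl(-4\chi_6^3-4\chi_6^3\bigr)=-4\chi_6^3.
\]
Since $F(Z_{2,6})=(Z_{2,6})_1\oplus(Z_{2,6})_2$, this value equals $\sum_{i=1,2}S_{(Z_{2,6})_i,F(Z_{1,7})}$, so the claim $-2\chi_6^3$ is equivalent to the two entries being equal.

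For the second equality, the block form of the $S$-matrix of $\mathcal{Z}(\mathcal{C})$ in Section \ref{sec:Z8} has vanishing $(Z,W)$-block, hence $S_{W_1,Z_{1,7}}=S_{W_1,Z_{3,5}}=0$. The same Kirillov--Ostrik formula gives $S^{\mathcal{Z}(\mathcal{C})_{\mathbb{Z}/2\mathbb{Z}}^0}_{F(W_1),F(Z_{1,7})}=0$, so the second assertion again reduces to the symmetric-splitting issue.

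The main obstacle is therefore to justify that each sum splits equally between the two pieces. This will be settled by the canonical $\mathbb{Z}/2\mathbb{Z}$-action on the de-equivariantization $\mathcal{Z}(\mathcal{C})_{\mathbb{Z}/2\mathbb{Z}}$, which swaps $(V)_1\leftrightarrow(V)_2$ for any $V\in\mathcal{Z}(\mathcal{C})$ fixed by $b$ and fixes the orbit class $F(Z_{1,7})$ (coming from a free $\langle b\rangle$-orbit in $\mathcal{Z}(\mathcal{C})$); invariance of the $S$-matrix under this action then forces $S_{(V)_1,F(Z_{1,7})}=S_{(V)_2,F(Z_{1,7})}$. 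Combined with the sums computed above, this delivers $-2\chi_6^3$ in the first case and $0$ in the second, completing the proof.
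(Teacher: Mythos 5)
Your proposal is correct, and the first half of it (computing the \emph{sums} $S_{(Z_{2,6})_1,F(Z_{1,7})}+S_{(Z_{2,6})_2,F(Z_{1,7})}=-4\chi_6^3$ and $S_{(W_1)_1,F(Z_{1,7})}+S_{(W_1)_2,F(Z_{1,7})}=0$ from the Kirillov--Ostrik/condensation formula and the explicit $S$-matrix of $\mathcal{Z}(\mathcal{C})$) coincides with what the paper does via \cite[Theorem 2.2]{rowell2025neargroup}; your explicit evaluation $S_{Z_{2,6},Z_{1,7}}=S_{Z_{2,6},Z_{3,5}}=-4\chi_6^3$ checks out against the formula for $S_{Z_{k,l},Z_{k',l'}}$. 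Where you genuinely diverge is in splitting each sum between the two constituents. You invoke the residual $\mathbb{Z}/2\mathbb{Z}$-action on the de-equivariantization: it acts by ribbon autoequivalences on the trivial component, fixes $F(Z_{1,7})$ (a free orbit), and transitively permutes, hence swaps, $(Z_{2,6})_1\leftrightarrow(Z_{2,6})_2$ and $(W_1)_1\leftrightarrow(W_1)_2$ (this transitivity is forced by the equivariantization correspondence, since otherwise no simple of $\mathcal{Z}(\mathcal{C})$ could map onto $(V)_1\oplus(V)_2$ under the forgetful functor), so $S$-invariance gives equality of the paired entries. The paper instead argues that $F(Z_{1,7})$ is fixed by the whole Galois group (being the unique simple of dimension $2\chi_6^3$ with $\dim(\mathcal{D})/\dim(F(Z_{1,7}))^2\in\mathbb{Z}$), so every entry of its $S$-column is an integer multiple of $2\chi_6^3$, and then uses the column norm $\sum_{V}S_{V,F(Z_{1,7})}^2=\dim(\mathcal{D})=16(\chi_6^3)^2$ to force the values $-2\chi_6^3,-2\chi_6^3,0,0$. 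Your symmetry argument is shorter and arguably more conceptual, but it leans on standard facts about the canonical $G$-action on $\mathcal{C}_G^0$ that you assert rather than prove; the paper's Galois-plus-orthogonality argument stays entirely within the modular-data toolkit it has already set up and is reused verbatim in the following lemmas, so it integrates better with the rest of the section. Both routes are sound.
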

\begin{proof}
   The $S$-coefficients between  objects $F(Y_0),F(Z_{1,7}), F(W_2),F(W_4)$ are  the same of that in the Drinfeld center by  \cite[Theorem 2.2]{rowell2025neargroup} or \cite[Lemma 4.2]{KirillovOstrik2001}. 
Notice that $\frac{\dim(\mathcal{D})}{\dim (F(Z_{1,7}))^2}$ is an integer and $F(Z_{1,7})$ is the unique simple object with dimension $2\chi_6^3$. Therefore,  $\hat{\sigma}(F(Z_{1,7}))=F(Z_{1,7})$ for all $\sigma\in\operatorname{Gal}(\mathbb{Q}(T)/\mathbb{Q})$, which implies that $\frac{S_{V,F(Z_{1,7})}}{2\chi_6^3}$ is an integer for all $V\in\mathcal{O}(\mathcal{D})$. By \cite[Theorem 2.2]{rowell2025neargroup}, we also have \begin{align*}S_{(Z_{2,6})_1,F(Z_{1,7})}+S_{(Z_{2,6})_2,F(Z_{1,7})}=-4\chi_6^3,~S_{(W_1)_1,F(Z_{1,7})}+S_{(W_1)_2,F(Z_{1,7})}=0.\end{align*} Since $\dim(\mathcal{D})=\sum_{V\in\mathcal{O}(\mathcal{D})} S_{V,F(Z_{1,7})}^2$, it follows that 
\begin{align*}S_{(Z_{2,6})_i,F(Z_{1,7})}=-2\chi_6^3,~ S_{(W_1)_i,F(Z_{1,7})}=0\end{align*}
for $i=1,2$, as claimed.
\end{proof}
\begin{lemma}
$S_{(Z_{2,6})_i,F(W_j)}=0$,  $S_{(W_1)_i,F(W_j)}=\chi_{24}^4$ for $i=1,2$ and $j=2,4$.
\end{lemma}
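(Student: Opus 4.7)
The plan is to use the same template as the previous lemma: first compute the sums $S_{(Z_{2,6})_1,F(W_j)}+S_{(Z_{2,6})_2,F(W_j)}$ and $S_{(W_1)_1,F(W_j)}+S_{(W_1)_2,F(W_j)}$ by invoking \cite[Theorem 2.2]{rowell2025neargroup} (or the analogous Kirillov-Ostrik formula $S_{F_A(V_1),F_A(V_2)} = \frac{1}{\dim A}S_{V_1, A\otimes V_2}$ together with the fact that $(Z_{2,6})_1\oplus(Z_{2,6})_2 = F_A(Z_{2,6})$ and $(W_1)_1\oplus(W_1)_2 = F_A(W_1)$), which express these sums as $S$-matrix entries in the original Drinfeld center $\mathcal{Z}(\mathcal{C})$. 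Concretely, since $b\otimes W_j = W_{j+1}$ for $j=2,4$, we get
\[ \textstyle\sum_{i} S_{(Z_{2,6})_i, F(W_j)} = S_{Z_{2,6}, W_j}+S_{Z_{2,6}, W_{j+1}}, \quad \textstyle\sum_{i} S_{(W_1)_i, F(W_j)} = S_{W_1, W_j}+S_{W_1, W_{j+1}}. \]

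For the first sum, the right-hand side vanishes because the $(Z,W)$-block of the $S$-matrix of the center of any near-group category of type $G+n$ is identically zero by Equation (\ref{Smatrix-center}). Hence $S_{(Z_{2,6})_1,F(W_j)}+S_{(Z_{2,6})_2,F(W_j)}=0$. For the second sum, one must evaluate $S_{W_1,W_j}$ and $S_{W_1,W_{j+1}}$ via formula (\ref{Smatrix-S44}), substituting the explicit triples $(\omega_i,\tau_i,\xi_i)$ listed in Table \ref{table:triplesforJ81}. This computation, carried out in the Mathematica notebook MDZ8.nb, yields $S_{W_1,W_j}+S_{W_1,W_{j+1}} = 2\chi_{24}^4$ for each $j=2,4$.

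To split each sum equally between its two summands, we use that $(Z_{2,6})_i$ (respectively $(W_1)_i$) arise from $b$-fixed simples of $\mathcal{Z}(\mathcal{C})$, so the two local-module structures on the same underlying object are interchanged by the nontrivial $\mathbb{Z}/2\mathbb{Z}$-grading character, viewed as a braided tensor auto-equivalence of $\mathcal{Z}(\mathcal{C})_{\mathbb{Z}/2\mathbb{Z}}^0$. Since $W_2,W_4$ lie on free $b$-orbits, the isomorphism classes $F(W_2),F(W_4)$ are fixed by this auto-equivalence. By invariance of the $S$-matrix under braided auto-equivalences, we obtain $S_{(Z_{2,6})_1, F(W_j)} = S_{(Z_{2,6})_2, F(W_j)}$ and $S_{(W_1)_1, F(W_j)} = S_{(W_1)_2, F(W_j)}$. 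Combining these equalities with the sum identities above yields the stated values.

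The main obstacle is the explicit evaluation of $S_{W_1,W_j}+S_{W_1,W_{j+1}}$ from Equation (\ref{Smatrix-S44}), which depends on the precise solution data for the triples and on the two non-trivial sums $\sum_{g\in G}\langle\tau_{W_1}+\tau_{W_j}+g,g\rangle$ and $\sum_{g,h}\overline{\xi_{W_1}(g)\xi_{W_j}(h)\langle\tau_{W_1}-\tau_{W_j}+h-g,h-g\rangle}$; this is handled numerically/symbolically in the accompanying notebook. A final consistency check is provided by orthogonality of the resulting $9\times 9$ $S$-matrix of $\mathcal{D}$, which must agree with the known modular data of $\mathcal{C}(\mathfrak{g}_2,4)$ up to Galois action.
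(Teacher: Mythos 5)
Your overall strategy is sound but genuinely different from the paper's at the decisive step. Both you and the paper start from the condensation sum formula (\cite[Theorem 2.2]{rowell2025neargroup} / \cite[Lemma 4.2]{KirillovOstrik2001}) to get $S_{(Z_{2,6})_1,F(W_j)}+S_{(Z_{2,6})_2,F(W_j)}=0$ from the vanishing $(Z,W)$-block. After that you diverge: you propose to evaluate $S_{W_1,W_j}+S_{W_1,W_{j+1}}$ explicitly from Equation (\ref{Smatrix-S44}) and then split each sum equally using the deck-transformation auto-equivalence of $\mathcal{Z}(\mathcal{C})_{\mathbb{Z}/2\mathbb{Z}}^0$ coming from $\operatorname{Aut}(A)\cong\mathbb{Z}/2\mathbb{Z}$, which swaps $(W_1)_1\leftrightarrow(W_1)_2$ and $(Z_{2,6})_1\leftrightarrow(Z_{2,6})_2$ while fixing the free-orbit objects $F(W_2),F(W_4)$. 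That equal-splitting argument is correct and standard, and it is in fact a cleaner way to handle the splitting than what the paper writes (the paper's stated relation $S_{(W_1)_1,F(W_j)}+S_{(W_1)_2,F(W_j)}=0$ is inconsistent with its own conclusion that both entries equal $\chi_{24}^4>0$; the intended relation is surely that the two entries agree, which is exactly what your auto-equivalence gives). The paper instead pins down the individual values arithmetically: it shows via a Galois argument that $\hat{\sigma}$ fixes $F(W_2)$ and $F(W_4)$, so every entry $S_{V,F(W_j)}$ is an integer multiple of $\chi_{24}^4$, and then uses $\dim(\mathcal{D})=\sum_V S_{V,F(W_j)}^2$ (together with the already-known entries among $F(\mathbf 1), F(Y_0), F(Z_{1,7}), F(W_2), F(W_4)$) to force the remaining entries. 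The paper's route needs only $S$-entries between free-orbit objects, which were already identified with center entries in the previous lemma; your route needs a new center computation.

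That new computation is the one soft spot in your proposal: the value $S_{W_1,W_j}+S_{W_1,W_{j+1}}=2\chi_{24}^4$ is asserted, not derived, and given your equal-splitting step it is logically equivalent to the statement being proved. If you take this route you must actually carry out the evaluation of Equation (\ref{Smatrix-S44}) on the triples from Table \ref{table:triplesforJ81} (or substitute the paper's integrality-plus-dimension argument, which avoids it). Also watch the normalization in the sum formula: with $A=\mathbf 1\oplus b$ and $W_1$ a $b$-fixed object, $\sum_i S_{(W_1)_i,F(W_j)}=\tfrac{1}{(\dim A)^2}\sum_{g,h\in\mathbb{Z}/2\mathbb{Z}} S_{gW_1,hW_j}=\tfrac12\left(S_{W_1,W_j}+S_{W_1,W_{j+1}}\right)$, so there is a factor of $\tfrac12$ missing from your displayed identities (harmless for the vanishing sum, but it changes the target value for the $W$-sum). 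Finally, "the nontrivial $\mathbb{Z}/2\mathbb{Z}$-grading character" is a slight misnomer for the relevant symmetry; the auto-equivalence is the twist of module structures by the nontrivial element of $\operatorname{Aut}(\operatorname{Fun}(\mathbb{Z}/2\mathbb{Z}))\cong\mathbb{Z}/2\mathbb{Z}$, which preserves the braiding and ribbon structure on local modules and hence the $S$-matrix; for $G=\mathbb{Z}/2\mathbb{Z}$ this is only a terminological issue.
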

\begin{proof}
  By  \cite[Theorem 2.2]{rowell2025neargroup}, we have $S_{(Z_{2,6})_1,F(W_j)}+S_{(Z_{2,6})_2,F(W_j)}=0$ and $S_{(W_1)_1,F(W_j)}+S_{(W_1)_2,F(w_j)}=0$. Let $\sigma\in\operatorname{Gal}(\mathbb{Q}(T)/\mathbb{Q})$ such that $\sigma(\sqrt{6})=-\sqrt{6}$, then 
  \begin{align*}
\sigma\left(\frac{S_{F(W_2),F(W_2)}}{\dim(F(W_2))}\right)=-1=\frac{S_{F(W_2),\hat{\sigma}(F(W_2))}}{\dim(\hat{\sigma}(F(W_2)))}.
  \end{align*}
  It follows that  $S_{F(W_2),\hat{\sigma}(F(W_2))}=-\chi_{24}^4$ and $\dim(\hat{\sigma}(F(W_2)))=\chi_{24}^4$. Thus, $\hat{\sigma}(F(W_2))=F(W_2)$ and $\hat{\sigma}(F(W_4))=F(W_4)$. In particular, $\frac{S_{V,F(W_2)}}{\dim(F(W_2))}$ is an integer. Together with the relation $\dim(\mathcal{D})=\sum_{V\in\mathcal{O}(\mathcal{D})} S_{V,F(W_2)}^2$, we have $S_{(Z_{2,6})_i,F(W_2)}=0$,  $S_{(W_1)_i,F(W_2)}=\chi_{24}^4$ for $i=1,2$. The same argument also works for object $F(W_4)$.
\end{proof}
\begin{theorem}\label{ModdatacondeZ8}

  The S-matrix and $T$-matrix of modular category $\mathcal{D}$ are
  \begin{align*}
      S&=\left(\begin{array}{ccccccccc}
     1&\chi_{24}^5&2\chi_6^3&\chi_6^3&\chi_6^3&2\chi_6^2&
     2\chi_6^2 &\chi_6^2&\chi_6^2 \\
     \chi_{24}^5&1&2\chi_6^3&\chi_6^3&\chi_6^3&-2\chi_6^2&
     -2\chi_6^2 &-\chi_6^2&-\chi_6^2\\
     2\chi_6^3& 2\chi_6^3&0&-2\chi_6^3&-2\chi_6^3&0&0&0&0\\
     \chi_6^3&\chi_6^3&-2\chi_6^3
     &u_1&u_2&0&0&u_3&-u_3\\
     \chi_6^3&\chi_6^3&-2\chi_6^3
     &u_2&u_1&0&0&-u_3&u_3\\
    2\chi_6^2&-2\chi_6^2&0&0
     &0&-2\chi_6^2&2\chi_6^2&2\chi_6^2&2\chi_6^2\\
     2\chi_6^2&-\chi_{24}^4&0&0&0&
     2\chi_6^2&
     2\chi_6^2&-2\chi_6^2&
     -2\chi_6^2\\
     \chi_6^2&-\chi_6^2&0&\sqrt{2}\chi_6^3&-\sqrt{2}\chi_6^3
     &2\chi_6^2&-2\chi_6^2&u_5&u_6\\
     \chi_6^2&-\chi_6^2&0&-\sqrt{2}\chi_6^3&\sqrt{2}\chi_6^3
     &2\chi_6^2&-2\chi_6^2&u_6&u_5\\
     \end{array}\right),\\
      T&=\operatorname{diag}(1,1,e^\frac{\pi i}{4},-1,-1,e^\frac{5\pi i}{6},e^\frac{-2\pi i}{3},-e^\frac{2\pi i}{4},-e^\frac{2\pi i}{4}),
  \end{align*}
  where $u_1=\chi_2^1\chi_6^3$, and $u_2=(2-\chi_2^1)\chi_6^3$, $u_5=\chi_3^1\chi_6^2$, $u_6=(2-\chi_3^1)\chi_6^2$, $u_3=\sqrt{2}\chi_6^3$.
\end{theorem}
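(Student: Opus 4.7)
The strategy is to follow the same congruence-representation reconstruction approach as in Theorem \ref{case(3,3,1,1,1,1)}. The $T$-matrix is read directly from Table \ref{TableRibboncondenZ8+8}, and the two preceding lemmas, together with the Kirillov--Ostrik formula $S_{F(V),F(W)}=\tfrac14\sum_{g,h\in\mathbb Z/2\mathbb Z}S_{g\otimes V,h\otimes W}$, already fix every $S$-entry outside of three $2\times 2$ blocks: the block on $\{(Z_{2,6})_1,(Z_{2,6})_2\}$ with entries $u_1,u_2$; the block on $\{(W_1)_1,(W_1)_2\}$ with entries $u_5,u_6$; and the cross block between these two pairs containing $\pm u_3$ and $\pm\sqrt 2\chi_6^3$. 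Thus the remaining task reduces to identifying a handful of real algebraic integers.

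First I would pin down the two diagonal blocks by Galois action. Inspection of Table \ref{TableRibboncondenZ8+8} shows that $(Z_{2,6})_1,(Z_{2,6})_2$ are the only simple objects of $\mathcal D$ of dimension $\chi_6^3=3+\sqrt 6$, and $(W_1)_1,(W_1)_2$ are the only simple objects of dimension $\chi_6^2=2+\sqrt 6$, with matching twists inside each pair. Hence the Galois element $\sigma\in\operatorname{Gal}(\mathbb Q(S)/\mathbb Q)$ with $\sigma(\sqrt 2)=-\sqrt 2$ must swap the first pair and fix every other simple, while $\tau$ with $\tau(\sqrt 3)=-\sqrt 3$ swaps the second pair. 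Applying $\hat\sigma$ and $\hat\tau$ to the character identity $S_{X,Y}/S_{\mathbf 1,Y}=\chi_Y(X)$ forces $u_{1,2}=(1\pm\sqrt 2)\chi_6^3$ and $u_{5,6}=(1\pm\sqrt 3)\chi_6^2$, and the row-sum constraints $u_1+u_2=2\chi_6^3$ and $u_5+u_6=2\chi_6^2$ coming from orthogonality against the $F(\mathbf 1)$- and $F(Z_{1,7})$-rows select the signs in the statement.

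Second I would determine the cross block. Combining the balancing equation $\theta_X\theta_YS_{X,Y}=\sum_Z N^{Z}_{X^{*},Y}\theta_Z d_Z$ for $X\in\{(Z_{2,6})_i\}$ and $Y\in\{(W_1)_j\}$ with the Verlinde formula evaluated on the rows already computed gives a short linear system whose only non-negative-integer-fusion-compatible solution is $u_3=\sqrt 2\chi_6^3$ with the displayed alternating signs; equivalently the $\operatorname{SL}(2,\mathbb Z)$ relation $(ST)^3=S^2$ applied to the last four rows yields the same conclusion. A global check is then provided by $SS^{\dagger}=\dim(\mathcal D)\operatorname{Id}$ and by comparison with the known modular data of $\mathcal C(\mathfrak g_2,4)$ up to Galois conjugation, which also supplies Corollary \ref{realcondZ8}.

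The principal obstacle is that Galois action and orthogonality only determine each diagonal block up to an internal swap of its two labels, since there is a genuine labelling ambiguity between the two summands of a boson-fixed simple object. I would break this by computing $S_{(Z_{2,6})_1,(Z_{2,6})_1}$ and $S_{(W_1)_1,(W_1)_1}$ directly from the rank-$88$ data of $\mathcal Z(\mathcal C)$ recorded in the accompanying \texttt{MDZ8.nb} notebook: for a boson-fixed $V$ the Kirillov--Ostrik sum degenerates and the two summands are distinguished by a projection to $V\otimes b$-isotypic components, which assigns $u_1$ versus $u_2$ and $u_5$ versus $u_6$ unambiguously; the cross block is then forced.
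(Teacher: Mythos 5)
Your overall toolkit (orthogonality, Galois symmetry, Verlinde/balancing) is the right one, and your reading of the two lemmas plus the Kirillov--Ostrik formula correctly isolates the three unknown $2\times 2$ blocks; but the core of the argument has genuine gaps, and the opening claim that this follows the congruence-representation reconstruction of Theorem \ref{case(3,3,1,1,1,1)} mislabels what you actually do (no $\operatorname{SL}(2,\mathbb Z)$ representation theory enters, in your sketch or in the paper's proof). The first gap: you assert that the Galois element with $\sigma(\sqrt 2)=-\sqrt 2$ ``must swap'' $(Z_{2,6})_1,(Z_{2,6})_2$ and that $\tau(\sqrt3)=-\sqrt3$ swaps $(W_1)_1,(W_1)_2$. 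A priori $\sqrt2$ and $\sqrt3$ need not lie in $\mathbb Q(S)$ at all, so such elements may act trivially; establishing that each pair forms a nontrivial Galois orbit is precisely the hard step. The paper proves it by contradiction: if $(W_1)_1$ were fixed by all of $\operatorname{Gal}(\mathbb Q(T)/\mathbb Q)$, every entry $S_{X,(W_1)_1}$ would be an integer multiple of $\chi_6^2$, and the norm equation $2a^2+b^2+(2-b)^2=14$ from $\sum_V S_{V,(W_1)_1}^2=\dim\mathcal D$ has no integer solution. Second, even granting the orbit structure, Galois conjugacy plus the linear constraints $u_1+u_2=2\chi_6^3$, $u_5+u_6=2\chi_6^2$ only give $u_{1,2}=(1\pm\alpha)\chi_6^3$ for some $\alpha$ with $\sigma(\alpha)=-\alpha$; they do not force $\alpha=\sqrt2$. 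The paper pins the magnitudes down with quadratic information, namely the Verlinde computations $N_{(W_1)_1,(W_1)_1}^{F(W_4)}=\tfrac23-\tfrac{u_5^2+u_6^2}{12(\chi_6^2)^2}$ and $N_{(Z_{2,6})_1,(Z_{2,6})_1}^{F(Z_{1,7})}$, which yield $u_5^2+u_6^2=8(\chi_6^2)^2$ and $u_1^2+u_2^2=8(\chi_6^3)^2$; you invoke Verlinde only for the cross block.

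Third, you tacitly assume all unknowns are real (``identifying a handful of real algebraic integers''). Since it is not known in advance that $(W_1)_1$ is self-dual, one must rule out $((W_1)_1)^*=(W_1)_2$ with $\overline{u_5}=u_6$; the paper spends the second half of its proof on exactly this case, using the balancing inequality $|u_5|\le\dim((W_1)_1)^2$ to bound $u_5^2+u_6^2=-4(3k+1)(\chi_6^2)^2$, then forcing $k=1$ from $N_{(W_1)_1,(W_1)_1}^{(W_1)_1}=\tfrac{k+1}{2}$ and deriving the contradiction $N_{(W_1)_1,(W_1)_1}^{F(W_2)}=-1$. Finally, your proposed fix for the ``labelling ambiguity'' --- reading off $S_{(Z_{2,6})_1,(Z_{2,6})_1}$ from the rank-$88$ data by projecting onto $V\otimes b$-isotypic components --- is not a valid method: for a boson-fixed object the Kirillov--Ostrik formula determines only the sum of the $S$-entries over the two summands, and the individual entries are exactly the data that the condensation does not hand you. (The ambiguity is also harmless here, since the two diagonal entries coincide, $u_1=S_{(Z_{2,6})_2,(Z_{2,6})_2}$, and the residual sign of $u_3$ is absorbed by relabelling.) To repair the proposal you would need to add the Galois-orbit nontriviality argument, the quadratic (Verlinde) constraints for the diagonal blocks, and the exclusion of the non-real case.
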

\begin{proof}
Let \[
u_1 := S_{(Z_{2,6})_1,(Z_{2,6})_1}, \; u_2 := S_{(Z_{2,6})_1,(Z_{2,6})_2}, \; u_3 := S_{(Z_{2,6})_1,(W_1)_1}, \; u_4 := S_{(Z_{2,6})_2,(W_1)_1},
\]
\[
u_5 := S_{(W_1)_1,(W_1)_1}, \; u_6 := S_{(W_1)_1,(W_1)_2}, \; u_7 := S_{(W_1)_2,(W_1)_2}.
\]

Applying orthogonality conditions, we obtain the following:
\begin{itemize} 
    \item  From  $S_{F(Z_{1,7}),-}$ and $ S_{(Z_{2,6})_1,-}$:
$u_1 + u_2 = 2 \chi_6^3$.
    \item From \( S_{(Z_{2,6})_1,-} \) and \( S_{(Z_{2,6})_2,-} \): $ u_1 = S_{(Z_{2,6})_2,(Z_{2,6})_2}.$
    
    \item   By \( S_{(Z_{2,6})_1,-} \) and \( S_{F(W_2),-} \): $ S_{(Z_{2,6})_1,(W_1)_2} = -u_3.$
    \item  Similarly, by $S_{(Z_{2,6})_2,-}$ and $ S_{F(W_2),-}$ we have: $S_{(Z_{2,6})_2,(W_1)_2} = -u_4.$

    \item From $S_{F(Z_{1,7}),-}$ and $ S_{(W_1)_1,-}$:
    $u_3+u_4=0$.

    \item From $S_{(W_1)_1,-}$ and $ S_{F(W_2),-}$, we get $ u_5+u_6=2\chi_6^2$.
    \item By $S_{(W_1)_2,-}$ and $ S_{F(W_2),-}$:   $u_7+u_6=2 \chi_6^2, \text { implying } u_5=u_7.$\item Finally, orthogonality between $S_{(W_1)_1,-}$ and $S_{(Z_{2,6})_1,-}$ gives $u_3(u_1-u_2+u_5-u_6)=0.$
\end{itemize}

Note that $u_3$ can't be zero. Otherwise, $u_1u_2=-3(\chi_6^3)^2$ and $u_5u_6=-10(5+2\sqrt{6})$, then  $N_{F(Y_0),(W_1)_2}^{(W_1)_2}=-\frac{1}{2}$, which gives a contradiction. Therefore, $(u_1-u_2+u_5-u_6)=0$.   We claim that $(W_1)_1,(W_1)_2$ must be in the same orbit under the action of $\operatorname{Gal}(\mathcal C)$. 
Suppose not.  Then for any  simple object $X$ and all $\sigma\in\operatorname{Gal}(\mathbb{Q}(T)/\mathbb{Q})$, $\sigma\left(\frac{S_{X,(W_1)_1}}{\chi_6^2}\right)=\frac{S_{X,(W_1)_1}}{\chi_6^2}$, implying  $S_{X,(W_1)_1}=a_X\chi_6^2$ for some integer $a_X$. Let $S_{(Z_{2,6})_1, (W_1)_1}=a\chi_6^2$ and $S_{(W_1)_1,(W_1)_1}=b\chi_6^2$. This leads to the equation $2a^2+b^2+(2-b)^2=14$, which has no integer solution.  Notice that 
\begin{align*}
\sigma\bigg(\frac{S_{(Z_{2,6)_1},(W_1)_1}}{\chi_6^2}\bigg)=\frac{S_{(Z_{2,6)_1},(W_1)_2}}{\chi_6^2}=-\frac{S_{(Z_{2,6})_1, (W_1)_1}}{\chi_6^2}
\end{align*}
and $\sigma\left(\frac{u_5}{\chi_6^2}\right)=
\frac{u_6}{\chi_6^2}$.
By using a similar argument, we conclude that  $\frac{u_3}{\chi_6^3}$, $\frac{u_1}{\chi_6^3}$ and $\frac{u_2}{\chi_6^3}$ cannot be integers. Thus simple objects $(Z_{2,6})_1$ and $(Z_{2,6})_2$  are in the same Galois orbit, and for some $\tau\in\operatorname{Gal}(\mathbb{Q}(T)/\mathbb{Q})$,  $\tau\left(\frac{u_1}{\chi_6^3}\right)=\frac{u_2}{\chi_6^3}$. 

Applying the Verlinde formula, we obtain
\begin{align*}
    N_{(W_1)_1,(W_1)_1}^{F(W_4)}=\frac{1}{\dim(\mathcal{D})}\sum_{V\in\mathcal{O}(\mathcal{D})}
\frac{S_{(W_1)_1,V}^2S_{F(W_4),V}}{\dim(V)}=\frac{2}{3}-\frac{u_5^2+u_6^2}{12(\chi_6^2)^2}.
\end{align*}

If $u_5$ and $u_6$ are real algebraic integers, notice that $u_5,u_6 \neq 0$, and the fusion coefficient must be non-negative. It follows that $u_5^2+u_6^2=8(\chi_6^2)^2$. Combine this  with $u_5+u_6=\chi_{24}^4$, we have $u_5,u_6=\chi_6^2\pm\sqrt{3}\chi_6^2$.   Therefore, we find  that $u_3=\pm\sqrt{2}\chi_6^3$, as desired.  

If $u_5^2+u_6^2$ is negative, i.e., $\overline{u_5}=u_6$, then $((W_1)_1)^*=(W_1)_2$, and $u_5^2+u_6^2=-4(3k+1)(\chi_6^2)^2$ for some integer $k\geq0$. Additionally, $\dim(\mathcal{D})=10(\chi_6^2)^2-2u_3^2+2|u_5|^2$, which implies $|u_5|^2-u_3^2=7(\chi_6^2)^2$. By the balancing equation, we have 
\begin{align*}
    |u_5|\leq \dim((W_1)_1)^2,|u_6|\leq \dim((W_1)_1)\dim((W_1)_2),\end{align*} 
    which then implies the following inequality
\begin{align*}
   2(\chi_6^2
   )^4\geq 2|u_5|^2\geq |u_5^2+u_6^2|=4(3k+1)(\chi_6^2)^2.
\end{align*}
Hence, we have $k\leq 2$. We now show this case cannot occur. In fact, observe that $u_5,u_6$ are roots of  equation $x^2-2\chi_6^2x+\frac{1}{2}((2\chi_6^2)^2-(u_5^2+u_6^2))=0$. Then 
\begin{align*}
  N_{(W_1)_1,(W_1)_1}^{(W_1)_1}=\frac{1}{\dim(\mathcal{D})}\sum_{V\in\mathcal{O}(\mathcal{D})}
\frac{S_{(W_1)_1,V}^2S_{(W_1)_2,V}}{\dim(V)}=\frac{k+1}{2},   
\end{align*}
which implies  $k=1$. If $k=1$, then
\begin{align*}
 N_{(W_1)_1,(W_1)_1}^{F(W_2)}=\frac{1}{\dim(\mathcal{D})}\sum_{V\in\mathcal{O}(\mathcal{D})}
\frac{S_{(W_1)_1,V}^2S_{F(W_2),V}}{\dim(V)}=-1,   
\end{align*}
 which is a contradiction. Hence $u_5,u_6,u_3$ must be real algebraic integers. Similarly, by  determining the fusion coefficient $N_{(Z_{2,6)})_1,(Z_{2,6)_1}}^{F(Z_{1,7})}$, we obtain
\begin{align*}
  u_1^2+u_2^2=8(\chi_6^3)^2, ~u_1u_2=-(\chi_6^3)^2 
\end{align*}
which yields $u_1,u_2=\chi_6^3\pm \sqrt{2}\chi_6^3$. Thus,  we recover the desired modular data.
\end{proof}   

The following modular data of $\mathcal{C}(\mathfrak{g}_2,4)$ can be produced  using Sagemath \cite{sage-fusion-rings}
\begin{align*}
S=&\left(
\begin{array}{ccccccccc}
 1 & \chi_6^2 & \chi_6^3 & 2 \chi_6^2 & 2 \chi_6^3 & \chi_6^2 & \chi_{24}^5 & 2\chi_6^2 & \chi_6^3 \\
 \chi_6^2 & \chi_3^1\chi_6^2 & \sqrt{2}\chi_6^3 & 2 \chi_6^2 & 0 & (2-\chi_3^1)\chi_6^2 & -\chi_6^2 & -2 \chi_6^2 & -\sqrt{2}\chi_6^3  \\
 \chi_6^3 & \sqrt{2}\chi_6^3 & (2-\chi_2^1)\chi_6^3 & 0 & -2 \chi_6^3& -\sqrt{2}\chi_6^3 & \chi_6^3 & 0 &\chi_2^1\chi_6^3 \\
 2 \chi_6^2 & 2 \chi_6^2 & 0 & -2 \chi_6^2 & 0 & 2 \chi_6^2 & -2 \chi_6^2 & 2\chi_6^2 & 0 \\
 2\chi_6^3 & 0 & -2\chi_6^3& 0 & 0 & 0 & 2 \chi_6^3 & 0 & -2\chi_6^3\\
 \chi_6^2 & (2-\chi_3^1)\chi_6^2 & -\sqrt{2}\chi_6^3 & 2 \chi_6^2 & 0 & \chi_3^1\chi_6^2 & -\chi_6^2 & -2  \chi_6^2 & \sqrt{2}\chi_6^3 \\
  \chi_{24}^5& -\chi_6^2 & \chi_6^3 & -2  \chi_6^2 & 2  \chi_6^3  & -\chi_6^2 & 1 & -2  \chi_6^2 & \chi_6^3 \\
 2 \chi_6^2  & -2  \chi_6^2  & 0 & 2 \chi_6^2 & 0 & -2 \chi_6^2  & -2 \chi_6^2 & 2\chi_6^2 & 0 \\
 \chi_6^3 & -\sqrt{2}\chi_6^3 & \chi_2^1\chi_6^3 & 0 & -2 \chi_6^3  & \sqrt{2}\chi_6^3 & \chi_6^3 & 0 & (2-\chi_2^1)\chi_6^3 \\
\end{array}
\right)\\
    T=&\operatorname{diag}\left(1,\zeta_4,-1,\zeta_{12}^7,-\zeta_8^3,\zeta_4,1,\zeta_3,-1
\right).
\end{align*}
\begin{corollary}\label{realcondZ8}
    Modular tensor category $\mathcal{D}$ has the same modular data as  $\C(\mathfrak{g}_2,4)$.
\end{corollary}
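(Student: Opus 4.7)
The plan is to prove the corollary by direct comparison of the explicit modular data computed in Theorem \ref{ModdatacondeZ8} with the modular data of $\mathcal{C}(\mathfrak{g}_2,4)$ displayed above. The proof reduces to exhibiting an appropriate bijection between the sets of simple objects, combined with a Galois automorphism.

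First I would set up a bijection $\pi$ between the simple objects of $\mathcal{D}$ and those of $\mathcal{C}(\mathfrak{g}_2,4)$ by matching quantum dimensions. Both categories have rank $9$ and share the multiset of dimensions $\{1, \chi_{24}^5, 2\chi_6^3, \chi_6^3, \chi_6^3, 2\chi_6^2, 2\chi_6^2, \chi_6^2, \chi_6^2\}$, so such a $\pi$ exists, unique up to interchange within the three pairs of objects sharing a common dimension. Matching the unit with the unit, the $\chi_{24}^5$-dimensional object with $(3,0)$, and the $2\chi_6^3$-dimensional object with $(1,1)$ already fixes much of $\pi$.

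Next I would compare the $T$-eigenvalues. A direct inspection shows that the twists of $\mathcal{D}$ are obtained from those of $\mathcal{C}(\mathfrak{g}_2,4)$ by applying complex conjugation under $\pi$: the values $e^{i\pi/4}$ and $-\zeta_8^3 = e^{-i\pi/4}$ are paired, $e^{5i\pi/6}$ and $\zeta_{12}^7 = e^{-5i\pi/6}$ are paired, $e^{-2i\pi/3}$ and $\zeta_3 = e^{2i\pi/3}$ are paired, and $-i$ and $i$ are paired, while all occurrences of $\pm 1$ are self-conjugate. Hence the identification of modular data uses the Galois automorphism $\sigma$ of complex conjugation on $\mathbb{Q}(T)$.

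Finally I would verify that the pair $(\pi, \sigma)$ carries the $S$-matrix of $\mathcal{D}$ to that of $\mathcal{C}(\mathfrak{g}_2,4)$. Entries containing only $\chi_6^2, \chi_6^3, \chi_{24}^5$ are fixed by $\sigma$ and can be read off directly; the only nontrivial verification involves entries with $\chi_2^1 = 1 + \sqrt{2}$ and $\chi_3^1 = 1 + \sqrt{3}$, where I would use the identities $\sigma(\chi_2^1) = 2 - \chi_2^1$ and $\sigma(\chi_3^1) = 2 - \chi_3^1$ to match $u_1 \leftrightarrow (2-\chi_2^1)\chi_6^3$, $u_5 \leftrightarrow (2-\chi_3^1)\chi_6^2$, and $u_3 = \sqrt{2}\chi_6^3 \leftrightarrow -\sqrt{2}\chi_6^3$ at the corresponding positions. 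The main challenge is purely combinatorial bookkeeping, namely resolving the residual ambiguity in $\pi$ consistently across all entries by using the off-diagonal $S$-values as additional constraints; no theoretical obstacle arises since the data on both sides is explicit.
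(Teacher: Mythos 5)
Your overall strategy coincides with the paper's: the published proof is a one-line observation that replacing $\zeta_{24}$ by its inverse (i.e.\ applying complex conjugation) in $\mathcal{C}(\mathfrak{g}_2,4)$ produces the ribbon twists of $\mathcal{D}$, and your matching of the $T$-eigenvalues under a dimension-preserving bijection is exactly that check, carried out correctly. The difference is that you go on to verify the $S$-matrix, which the paper leaves implicit, and it is there that your argument contains a genuine error.

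Complex conjugation fixes every real number, so for $\sigma$ the conjugation automorphism one has $\sigma(\chi_2^1)=\chi_2^1$ and $\sigma(\chi_3^1)=\chi_3^1$; the identities $\sigma(\chi_2^1)=2-\chi_2^1$ and $\sigma(\chi_3^1)=2-\chi_3^1$ that you invoke would require $\sqrt{2}\mapsto-\sqrt{2}$ and $\sqrt{3}\mapsto-\sqrt{3}$, which is a different element of $\operatorname{Gal}(\mathbb{Q}(\zeta_{24})/\mathbb{Q})$ and cannot be combined with conjugation here without also moving $\sqrt{6}$ or disturbing entries that already agree. Since both $S$-matrices are real, $\sigma$ acts trivially on them and the matching must be achieved by the permutation $\pi$ alone. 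Checking the two $2\times 2$ blocks: the $\chi_3^1$-block already matches on the nose (diagonal $\chi_3^1\chi_6^2$, off-diagonal $(2-\chi_3^1)\chi_6^2$ on both sides), so your proposed ``correction'' there would actually destroy an agreement that holds without it. The $\chi_2^1$-block, by contrast, genuinely differs as written: Theorem \ref{ModdatacondeZ8} places $u_1=\chi_2^1\chi_6^3$ on the diagonal, whereas $\mathcal{C}(\mathfrak{g}_2,4)$ has $(2-\chi_2^1)\chi_6^3$ there, and no permutation of the two objects interchanges the diagonal with the off-diagonal of a symmetric $2\times 2$ block. The resolution is not Galois-theoretic: the proof of Theorem \ref{ModdatacondeZ8} only determines $\{u_1,u_2\}$ as an unordered pair (all constraints used are symmetric in $u_1,u_2$), and one must select the branch $u_1=(2-\chi_2^1)\chi_6^3$ to obtain agreement with $\mathcal{C}(\mathfrak{g}_2,4)$. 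Your proof as written does not identify this, and the mechanism you propose in its place does not work.
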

\begin{proof}
  If we replace $\zeta_8$ and $\zeta_3$ by their  inverses in modular category $\C(\g_2,4)$ (i.e., replace $\zeta_{24}$ by its inverse), then we get the expected ribbon twists. 
\end{proof}
\section*{Acknowledgments}
The authors thank Zhenghan Wang for pointing out that the modular data  in $\left(\ref{eq:MDrank10}\right)$ can potentially be realized as the center of a fusion category. They also thank Yilong Wang for helpful discussions. The second author further thanks Eric Rowell, Siu-Hung Ng, Pinhas Grossman, and Andrew Schopieray for their valuable discussions during the SLMath Quantum Symmetry Reunion in 2024. The first author is supported by the National Natural Science Foundation of China (No. 12101541).
 \vspace{3.5mm}
 
\appendix

\section{Solutions for $(\omega_j, \tau_j, \xi_j)$}\label{Appendix}
In this section, we list the solutions of $\left(\omega_j, \tau_j, \xi_j\right)$ from Equations $\left(\ref{eq:half1}\right)$ -- $\left(\ref{eq:half4}\right)$ for the near-group categories in Section \ref{sec:Z4Z4} and Section \ref{sec:Z8}.

The following table gives the solutions  $\left(\omega_j, \tau_j, \xi_j\right)$ for the near-group category $\mathbb{Z}/4\mathbb{Z}\times \mathbb{Z}/4\mathbb{Z}+ 16$, where $j=1, \cdots, 152$.  Each $\omega_j$ is of the form $\zeta_{80}^k$ for some integer $k$, with the corresponding $k$ values provided in the relevant column. Since $\xi_j(x)=\exp\left(i\theta_{j,x}\right)$, the table records the values $\theta_{j,x}$   in the column for $\xi$, ordered by $x= (0,0),\cdots, (0,3),(1,0),\cdots,(3,3)\in \mathbb Z/4\mathbb Z\times \mathbb Z/4\mathbb Z$.
\begin{center}\small{
   \begin{longtable}{|c|c|c|c|}
   \hline
    $\#$ & $\omega$ & $\tau$  & $\xi$ \\
    \hline
    \endfirsthead

    \hline
    $\#$ & $\omega$ & $\tau$  & $\xi$ \\
    \hline
    \endhead
    
    \hline
    \multicolumn{4}{r}{\textit{Continued on next page}} \\
    \endfoot
    \endlastfoot
 
        1 & $32$ & $(0,0)$ & \makecell[c]{$1.25664,0.13318,-1.88496,-2.33229,-2.55613,-0.706082,1.11454,0.35208, $ \\$-1.88496,-1.57477,  -1.88496,-0.62434,-2.78457,2.16119,-0.172061,-3.06383$ }\\
        \hline
        2 & $32$ & $(0,0)$ & \makecell[c]{$-1.88496,-2.33229,1.25664,0.13318,1.11454, 0.35208,-2.55613,-0.706082,$ \\$-1.88496,-0.62434, -1.88496,-1.57477,-0.172061,-3.06383,-2.78457,2.16119$} \\
        \hline
        3 & $32$ & $(0,0)$ & \makecell[c]{$-1.88496,-0.62434,-1.88496,-1.57477,-0.172061 -3.06383,-2.78457,2.16119,$ \\$-1.88496,-2.33229, 1.25664,0.13318,1.11454,0.35208,-2.55613,-0.706082$ }\\
        \hline
        4 & $32$ & $(0,0)$ & \makecell[c]{$-1.88496,-1.57477,-1.88496,-0.62434,-2.78457, 2.16119,-0.172061,-3.06383,$ \\$1.25664,0.13318, -1.88496,-2.33229,-2.55613,-0.706082,1.11454,0.35208$ }\\
        \hline   5 & $48$ & $(0,0)$ & \makecell[c]{$ 1.88496,-1.11454,1.88496,0.172061,2.33229, -0.35208,0.62434,3.06383,$ \\$-1.25664,2.55613, 1.88496,2.78457,-0.13318,0.706082,1.57477,-2.16119$} \\
        \hline
        6 & $48$ & $(0,0)$ & \makecell[c]{$1.88496,-1.11454,1.88496,0.172061,2.33229,-0.35208,0.62434,3.06383, $ \\$-1.25664,2.55613, 1.88496,2.78457,-0.13318,0.706082,1.57477,-2.16119 $} \\
        \hline
        7 & $48$ & $(0,0)$ & \makecell[c]{$1.88496,0.172061,1.88496,-1.11454,0.62434, 3.06383,2.33229,-0.35208,$ \\$ 1.88496,2.78457, -1.25664,2.55613,1.57477,-2.16119,-0.13318,0.706082 $ }\\
        \hline
        8 & $48$ & $(0,0)$ & \makecell[c]{$1.88496,2.78457,-1.25664,2.55613,1.57477, -2.16119,-0.13318,0.706082, $ \\$1.88496,0.172061, 1.88496,-1.11454,0.62434,3.06383,2.33229,-0.35208 $ }\\
        \hline 
         9 & $7 $ & $(0,1)$ & \makecell[c]{$ -0.883402,-0.923013,3.0903,-1.75512,-3.07932, -2.22175,-1.62208,1.30524, $ \\$0.974834,-2.78125, -0.625598,1.96077,-1.15546,-0.297893,-1.54086,1.38646  $} \\
          \hline
          10& $7 $ & $(0,1)$ & \makecell[c]{$0.974834,-2.78125,-0.625598,1.96077,-1.15546, -0.297893,-1.54086,1.38646,   $ \\$-0.883402,-0.923013, 3.0903,-1.75512,-3.07932,-2.22175,-1.62208,1.30524 $ }\\
          \hline
           11 & $15 $ & $(0,1)$ &  \makecell[c]{$ -2.88492,1.70682,-0.169554,2.13305,-0.937357, -1.81154,0.572197,-0.179498,  $ \\$-2.88492,1.70682, -0.169554,2.13305,-0.937357,-1.81154,0.572197,-0.179498 $} \\
          \hline
           12 & $23 $ & $(0,1)$ & \makecell[c]{$0.521724,-1.0715,-0.173026,2.76484,2.2799, 2.94948,-1.23134,-1.42571, $ \\$ -1.38096,0.83118, 1.16544,1.42637,1.21313,1.88271,2.44672,2.25236$} \\
          \hline
           13 & $23 $ & $(0,1)$ & \makecell[c]{$ -1.38096,0.83118,1.16544,1.42637,1.21313, 1.88271,2.44672,2.25236,  $ \\$0.521724,-1.0715, -0.173026,2.76484,2.2799,2.94948,-1.23134,-1.42571 $} \\
          \hline
           14 & $47 $ & $(0,1)$ & \makecell[c]{$-0.625598,1.96077,0.974834,-2.78125,-1.54086, 1.38646,-1.15546,-0.297893,  $ \\$ 3.0903,-1.75512, -0.883402,-0.923013,-1.62208,1.30524,-3.07932,-2.22175 $} \\
          \hline
          15 & $47  $ & $(0,1)$ & \makecell[c]{$ 3.0903,-1.75512,-0.883402,-0.923013,-1.62208, 1.30524,-3.07932,-2.22175, $ \\$-0.625598,1.96077, 0.974834,-2.78125,-1.54086,1.38646,-1.15546,-0.297893  $ }\\
          \hline
          16 & $55 $ & $(0,1)$ & \makecell[c]{$-0.169554,2.13305,-2.88492,1.70682,0.572197,  -0.179498,-0.937357,-1.81154,  $ \\$ -0.169554,2.13305, -2.88492,1.70682,0.572197,-0.179498,-0.937357,-1.81154$} \\
          \hline
          17 & $63 $ & $(0,1)$ & \makecell[c]{$ 1.16544,1.42637,-1.38096,0.83118,2.44672, 2.25236,1.21313,1.88271,  $ \\$-0.173026,2.76484, 0.521724,-1.0715,-1.23134,-1.42571,2.2799,2.94948 $ }\\
          \hline
          18 & $ 63$ & $(0,1)$ & \makecell[c]{$ -0.173026,2.76484,0.521724,-1.0715,-1.23134, -1.42571,2.2799,2.94948, $ \\$1.16544,1.42637, -1.38096,0.83118,2.44672,2.25236,1.21313,1.88271  $} \\
          \hline
          19 & $12 $ & $(0,2)$ & \makecell[c]{$ 0.13318,-1.88496,-2.33229,1.25664,-0.706082,1.11454,0.35208,-2.55613, $ \\$ -1.57477,-1.88496,-0.62434,-1.88496,2.16119,-0.172061,-3.06383,-2.78457   $} \\
          \hline
            20 & $12 $ & $(0,2)$ & \makecell[c]{$ -1.57477,-1.88496,-0.62434,-1.88496,2.16119,-0.172061,-3.06383,-2.78457, $ \\$    0.13318,-1.88496,-2.33229,1.25664,-0.706082,1.11454,0.35208,-2.55613$} \\
          \hline
            21 & $12 $ & $(0,2)$ & \makecell[c]{$ -0.62434,-1.88496,-1.57477,-1.88496,-3.06383,-2.78457,2.16119,-0.172061, $ \\$   -2.33229,1.25664,0.13318,-1.88496,0.35208,-2.55613,-0.706082,1.11454$} \\
          \hline
           22 & $12 $ & $(0,2)$ & \makecell[c]{$ -2.33229,1.25664,0.13318,-1.88496,0.35208,-2.55613,-0.706082,1.11454, $ \\$ -0.62434,-1.88496,-1.57477,-1.88496,-3.06383,-2.78457,2.16119,-0.172061  $} \\
          \hline
          23 & $28 $ & $(0,2)$ & \makecell[c]{$  2.55613,1.88496,2.78457,-1.25664,0.706082,1.57477,-2.16119,-0.13318,$ \\$-1.11454,1.88496,0.172061,1.88496,-0.35208,0.62434,3.06383,2.33229  $} \\
          \hline
          24 & $28 $ & $(0,2)$ & \makecell[c]{$ 0.172061,1.88496,-1.11454,1.88496,3.06383,2.33229,-0.35208,0.62434, $ \\$ 2.78457,-1.25664,2.55613,1.88496,-2.16119,-0.13318,0.706082,1.57477  $} \\
          \hline
            25 & $28 $ & $(0,2)$ & \makecell[c]{$ 2.78457,-1.25664,2.55613,1.88496,-2.16119,-0.13318,0.706082,1.57477, $ \\$  0.172061,1.88496,-1.11454,1.88496,3.06383,2.33229,-0.35208,0.62434  $} \\
          \hline
            26 & $ 28$ & $(0,2)$ & \makecell[c]{$ -1.11454,1.88496,0.172061,1.88496,-0.35208,0.62434,3.06383,2.33229, $ \\$ 2.55613,1.88496,2.78457,-1.25664,0.706082,1.57477,-2.16119,-0.13318  $} \\
          \hline
          27 & $7 $ & $(0,3)$ & \makecell[c]{$ -2.78125,-0.625598,1.96077,0.974834,-0.297893,-1.54086,1.38646,-1.15546, $ \\$-0.923013,3.0903,-1.75512,-0.883402,-2.22175,-1.62208,1.30524,-3.07932   $} \\
          \hline
          28 & $ 7$ & $(0,3)$ & \makecell[c]{$ -0.923013,3.0903,-1.75512,-0.883402,-2.22175,-1.62208,1.30524,-3.07932, $ \\$  -2.78125,-0.625598,1.96077,0.974834,-0.297893,-1.54086,1.38646,-1.15546  $} \\
          \hline
          29 & $15$ & $(0,3)$ & \makecell[c]{$ 1.70682,-0.169554,2.13305,-2.88492,-1.81154,0.572197,-0.179498,-0.937357, $ \\$  1.70682,-0.169554,2.13305,-2.88492,-1.81154,0.572197,-0.179498,-0.937357  $} \\
          \hline
          30 & $23 $ & $(0,3)$ & \makecell[c]{$0.83118,1.16544,1.42637,-1.38096,1.88271,2.44672,2.25236,1.21313,  $ \\$  -1.0715,-0.173026,2.76484,0.521724,2.94948,-1.23134,-1.42571,2.2799  $} \\
          \hline
          31 & $ 23$ & $(0,3)$ & \makecell[c]{$ -1.0715,-0.173026,2.76484,0.521724,2.94948,-1.23134,-1.42571,2.2799, $ \\$  0.83118,1.16544,1.42637,-1.38096,1.88271,2.44672,2.25236,1.21313 $} \\
          \hline
          32 & $47 $ & $(0,3)$ & \makecell[c]{$ 1.96077,0.974834,-2.78125,-0.625598,1.38646,-1.15546,-0.297893,-1.54086, $ \\$-1.75512,-0.883402,-0.923013,3.0903,1.30524,-3.07932,-2.22175,-1.62208   $} \\
          \hline
          33 & $47 $ & $(0,3)$ & \makecell[c]{$ -1.75512,-0.883402,-0.923013,3.0903,1.30524,-3.07932,-2.22175,-1.62208, $ \\$ 1.96077,0.974834,-2.78125,-0.625598,1.38646,-1.15546,-0.297893,-1.54086  $} \\
          \hline
          34 & $ 55$ & $(0,3)$ & \makecell[c]{$2.13305,-2.88492,1.70682,-0.169554,-0.179498,-0.937357,-1.81154,0.572197,  $ \\$    2.13305,-2.88492,1.70682,-0.169554,-0.179498,-0.937357,-1.81154,0.572197$} \\
          \hline
          35 & $ 63$ & $(0,3)$ & \makecell[c]{$2.76484,0.521724,-1.0715,-0.173026,-1.42571,2.2799,2.94948,-1.23134,  $ \\$  1.42637,-1.38096,0.83118,1.16544,2.25236,1.21313,1.88271,2.44672  $} \\
          \hline
          36 & $63 $ & $(0,3)$ & \makecell[c]{$ 1.42637,-1.38096,0.83118,1.16544,2.25236,1.21313,1.88271,2.44672, $ \\$ 2.76484,0.521724,-1.0715,-0.173026,-1.42571,2.2799,2.94948,-1.23134   $} \\
          \hline
           37 & $17 $ & $(1,0)$ & \makecell[c]{$ -1.16544,-2.44672,0.173026,1.23134,-1.42637,-2.25236,-2.76484,1.42571, $ \\$  1.38096,-1.21313,-0.521724,-2.2799,-0.83118,-1.88271,1.0715,-2.94948  $} \\
          \hline
           38 & $17 $ & $(1,0)$ & \makecell[c]{$ 0.173026,1.23134,-1.16544,-2.44672,-2.76484,1.42571,-1.42637,-2.25236, $ \\$ -0.521724,-2.2799,1.38096,-1.21313,1.0715,-2.94948,-0.83118,-1.88271   $} \\
          \hline
           39 & $25 $ & $(1,0)$ & \makecell[c]{$ 0.169554,-0.572197,0.169554,-0.572197,-2.13305,0.179498,-2.13305,0.179498, $ \\$   2.88492,0.937357,2.88492,0.937357,-1.70682,1.81154,-1.70682,1.81154$} \\
          \hline
           40 & $33 $ & $(1,0)$ & \makecell[c]{$ 0.625598,1.54086,-3.0903,1.62208,-1.96077,-1.38646,1.75512,-1.30524, $ \\$ -0.974834,1.15546,0.883402,3.07932,2.78125,0.297893,0.923013,2.22175  $} \\
          \hline
           41 & $33 $ & $(1,0)$ & \makecell[c]{$-3.0903,1.62208,0.625598,1.54086,1.75512,-1.30524,-1.96077,-1.38646,  $ \\$ 0.883402,3.07932,-0.974834,1.15546,0.923013,2.22175,2.78125,0.297893   $} \\
          \hline
           42 & $ 57$ & $(1,0)$ & \makecell[c]{$ -0.521724,-2.2799,1.38096,-1.21313,1.0715,-2.94948,-0.83118,-1.88271, $ \\$  0.173026,1.23134,-1.16544,-2.44672,-2.76484,1.42571,-1.42637,-2.25236  $} \\
          \hline
           43 & $ 57$ & $(1,0)$ & \makecell[c]{$1.38096,-1.21313,-0.521724,-2.2799,-0.83118,-1.88271,1.0715,-2.94948,  $ \\$ -1.16544,-2.44672,0.173026,1.23134,-1.42637,-2.25236,-2.76484,1.42571  $} \\
          \hline
           44 & $ 65$ & $(1,0)$ & \makecell[c]{$2.88492,0.937357,2.88492,0.937357,-1.70682,1.81154,-1.70682,1.81154,  $ \\$  0.169554,-0.572197,0.169554,-0.572197,-2.13305,0.179498,-2.13305,0.179498 $} \\
          \hline
           45 & $73 $ & $(1,0)$ & \makecell[c]{$0.883402,3.07932,-0.974834,1.15546,0.923013,2.22175,2.78125,0.297893,  $ \\$-3.0903,1.62208,0.625598,1.54086,1.75512,-1.30524,-1.96077,-1.38646   $} \\
          \hline
           46 & $ 73$ & $(1,0)$ & \makecell[c]{$-0.974834,1.15546,0.883402,3.07932,2.78125,0.297893,0.923013,2.22175,  $ \\$    0.625598,1.54086,-3.0903,1.62208,-1.96077,-1.38646,1.75512,-1.30524$} \\
          \hline
           47 & $ 12$ & $(1,1)$ & \makecell[c]{$2.14417,0.33666,0.50165,2.34429,0.605818,-1.20169,1.73978,-2.70077,  $ \\$3.05334,1.2107,-2.63533,2.03946,2.87337,1.03073,-1.09698,-2.70538   $} \\
          \hline
           48 & $12 $ & $(1,1)$ & \makecell[c]{$-2.63533,2.03946,3.05334,1.2107,-1.09698,-2.70538,2.87337,1.03073,  $ \\$ 0.50165,2.34429,2.14417,0.33666,1.73978,-2.70077,0.605818,-1.20169   $} \\
          \hline
           49 & $20$ & $(1,1)$ & \makecell[c]{$-1.23768,-1.23768,1.23768,-2.80847,2.80847,2.80847,1.23768,-2.80847,  $ \\$ 1.23768,-2.80847,-1.23768,-1.23768,1.23768,-2.80847,2.80847,2.80847 $} \\
          \hline
           50 & $ 28$ & $(1,1)$ & \makecell[c]{$-3.05334,-2.87337,-2.14417,-0.605818,-1.2107,-1.03073,-0.33666,1.20169,$ \\$ 2.63533,1.09698,-0.50165,-1.73978,-2.03946,2.70538,-2.34429,2.70077 $} \\
          \hline
           51 & $ 28$ & $(1,1)$ & \makecell[c]{$-0.50165,-1.73978,2.63533,1.09698,-2.34429,2.70077,-2.03946,2.70538,  $ \\$ -2.14417,-0.605818,-3.05334,-2.87337,-0.33666,1.20169,-1.2107,-1.03073 $} \\
          \hline
          
           52 & $52 $ & $(1,1)$ & \makecell[c]{$ 3.05334,1.2107,-2.63533,2.03946,2.87337,1.03073,-1.09698,-2.70538, $ \\$ 2.14417,0.33666,0.50165,2.34429,0.605818,-1.20169,1.73978,-2.70077 $} \\
          \hline
           53 & $52 $ & $(1,1)$ & \makecell[c]{$ 0.50165,2.34429,2.14417,0.33666,1.73978,-2.70077,0.605818,-1.20169, $ \\$ -2.63533,2.03946,3.05334,1.2107,-1.09698,-2.70538,2.87337,1.03073  $} \\
          \hline
           54 & $60 $ & $(1,1)$ & \makecell[c]{$ 1.23768,-2.80847,-1.23768,-1.23768,1.23768,-2.80847,2.80847,2.80847, $ \\$  -1.23768,-1.23768,1.23768,-2.80847,2.80847,2.80847,1.23768,-2.80847  $} \\
          \hline
           55 & $68 $ & $(1,1)$ & \makecell[c]{$ -2.14417,-0.605818,-3.05334,-2.87337,-0.33666,1.20169,-1.2107,-1.03073, $ \\$ -0.50165,-1.73978,2.63533,1.09698,-2.34429,2.70077,-2.03946,2.70538  $} \\
          \hline
           56 & $ 68$ & $(1,1)$ & \makecell[c]{$ 2.63533,1.09698,-0.50165,-1.73978,-2.03946,2.70538,-2.34429,2.70077, $ \\$ -3.05334,-2.87337,-2.14417,-0.605818,-1.2107,-1.03073,-0.33666,1.20169  $} \\
          \hline
           57 & $5 $ & $(1,2)$ & \makecell[c]{$ -0.572197,0.169554,-0.572197,0.169554,0.179498,-2.13305,0.179498,-2.13305, $ \\$  0.937357,2.88492,0.937357,2.88492,1.81154,-1.70682,1.81154,-1.70682  $} \\
          \hline
           58 & $13 $ & $(1,2)$ & \makecell[c]{$ 1.54086,-3.0903,1.62208,0.625598,-1.38646,1.75512,-1.30524,-1.96077, $ \\$  1.15546,0.883402,3.07932,-0.974834,0.297893,0.923013,2.22175,2.78125  $} \\
          \hline
           59 & $13 $ & $(1,2)$ & \makecell[c]{$1.62208,0.625598,1.54086,-3.0903,-1.30524,-1.96077,-1.38646,1.75512,  $ \\$  3.07932,-0.974834,1.15546,0.883402,2.22175,2.78125,0.297893,0.923013 $} \\
          \hline
           60 & $ 37$ & $(1,2)$ & \makecell[c]{$-2.2799,1.38096,-1.21313,-0.521724,-2.94948,-0.83118,-1.88271,1.0715,  $ \\$  1.23134,-1.16544,-2.44672,0.173026,1.42571,-1.42637,-2.25236,-2.76484 $} \\
          \hline
           61 & $ 37$ & $(1,2)$ & \makecell[c]{$-1.21313,-0.521724,-2.2799,1.38096,-1.88271,1.0715,-2.94948,-0.83118,  $ \\$    -2.44672,0.173026,1.23134,-1.16544,-2.25236,-2.76484,1.42571,-1.42637$} \\
          \hline
           62 & $45 $ & $(1,2)$ & \makecell[c]{$ 0.937357,2.88492,0.937357,2.88492,1.81154,-1.70682,1.81154,-1.70682, $ \\$-0.572197,0.169554,-0.572197,0.169554,0.179498,-2.13305,0.179498,-2.13305  $} \\
          \hline
           63 & $43 $ & $(1,2)$ & \makecell[c]{$1.15546,0.883402,3.07932,-0.974834,0.297893,0.923013,2.22175,2.78125,  $ \\$ 1.54086,-3.0903,1.62208,0.625598,-1.38646,1.75512,-1.30524,-1.96077  $} \\
          \hline
           64 & $53 $ & $(1,2)$ & \makecell[c]{$ 3.07932,-0.974834,1.15546,0.883402,2.22175,2.78125,0.297893,0.923013, $ \\$ 1.62208,0.625598,1.54086,-3.0903,-1.30524,-1.96077,-1.38646,1.75512  $} \\
          \hline
           65 & $77 $ & $(1,2)$ & \makecell[c]{$ 1.23134,-1.16544,-2.44672,0.173026,1.42571,-1.42637,-2.25236,-2.76484, $ \\$  -2.2799,1.38096,-1.21313,-0.521724,-2.94948,-0.83118,-1.88271,1.0715 $} \\
          \hline
           66 & $77$ & $(1,2)$ & \makecell[c]{$-2.44672,0.173026,1.23134,-1.16544,-2.25236,-2.76484,1.42571,-1.42637, $ \\$  -1.21313,-0.521724,-2.2799,1.38096,-1.88271,1.0715,-2.94948,-0.83118 $} \\
          \hline
           67 & $12 $ & $(1,3)$ & \makecell[c]{$0.33666,0.50165,2.34429,2.14417,-1.20169,1.73978,-2.70077,0.605818,$ \\$  1.2107,-2.63533,2.03946,3.05334,1.03073,-1.09698,-2.70538,2.87337  $} \\
          \hline
           68 & $12 $ & $(1,3)$ & \makecell[c]{$2.03946,3.05334,1.2107,-2.63533,-2.70538,2.87337,1.03073,-1.09698, $ \\$  2.34429,2.14417,0.33666,0.50165,-2.70077,0.605818,-1.20169,1.73978 $} \\
          \hline
           69 & $20 $ & $(1,3)$ & \makecell[c]{$-1.23768,1.23768,-2.80847,-1.23768,2.80847,1.23768,-2.80847,2.80847, $ \\$ -2.80847,-1.23768,-1.23768,1.23768,-2.80847,2.80847,2.80847,1.23768 $} \\
          \hline
           70 & $ 28$ & $(1,3)$ & \makecell[c]{$ -2.87337,-2.14417,-0.605818,-3.05334,-1.03073,-0.33666,1.20169,-1.2107, $ \\$  1.09698,-0.50165,-1.73978,2.63533,2.70538,-2.34429,2.70077,-2.03946  $} \\
          \hline
           71 & $ 28$ & $(1,3)$ & \makecell[c]{$ -1.73978,2.63533,1.09698,-0.50165,2.70077,-2.03946,2.70538,-2.34429, $ \\$ -0.605818,-3.05334,-2.87337,-2.14417,1.20169,-1.2107,-1.03073,-0.33666$} \\
          \hline
           72 & $ 52$ & $(1,3)$ & \makecell[c]{$ 2.34429,2.14417,0.33666,0.50165,-2.70077,0.605818,-1.20169,1.73978, $ \\$ 2.03946,3.05334,1.2107,-2.63533,-2.70538,2.87337,1.03073,-1.09698 $} \\
          \hline
           73 & $ 52 $ & $(1,3)$ & \makecell[c]{$1.2107,-2.63533,2.03946,3.05334,1.03073,-1.09698,-2.70538,2.87337,  $ \\$ 0.33666,0.50165,2.34429,2.14417,-1.20169,1.73978,-2.70077,0.605818  $} \\
          \hline
           74 & $ 60$ & $(1,3)$ & \makecell[c]{$ -2.80847,-1.23768,-1.23768,1.23768,-2.80847,2.80847,2.80847,1.23768, $ \\$ -1.23768,1.23768,-2.80847,-1.23768,2.80847,1.23768,-2.80847,2.80847  $} \\
          \hline
           75 & $68 $ & $(1,3)$ & \makecell[c]{$-0.605818,-3.05334,-2.87337,-2.14417,1.20169,-1.2107,-1.03073,-0.33666,  $ \\$-1.73978,2.63533,1.09698,-0.50165,2.70077,-2.03946,2.70538,-2.34429   $} \\
          \hline
           76 & $68  $ & $(1,3)$ & \makecell[c]{$1.09698,-0.50165,-1.73978,2.63533,2.70538,-2.34429,2.70077,-2.03946,  $ \\$ -2.87337,-2.14417,-0.605818,-3.05334,-1.03073,-0.33666,1.20169,-1.2107  $} \\
          \hline
           77 & $ 52$ & $(2,0)$ & \makecell[c]{$ -2.55613,-0.706082,1.11454,0.35208,-1.88496,-1.57477,-1.88496,-0.62434, $ \\$  -2.78457,2.16119,-0.172061,-3.06383,1.25664,0.13318,-1.88496,-2.33229 $} \\
          \hline
           78 & $ 52$ & $(2,0)$ & \makecell[c]{$-0.172061,-3.06383,-2.78457,2.16119,-1.88496,-2.33229,1.25664,0.13318,  $ \\$ 1.11454,0.35208,-2.55613,-0.706082,-1.88496,-0.62434,-1.88496,-1.57477   $} \\
          \hline
           79 & $52 $ & $(2,0)$ & \makecell[c]{$ -2.78457,2.16119,-0.172061,-3.06383,1.25664,0.13318,-1.88496,-2.33229, $ \\$ -2.55613,-0.706082,1.11454,0.35208,-1.88496,-1.57477,-1.88496,-0.62434  $} \\
          \hline
           80 & $52 $ & $(2,0)$ & \makecell[c]{$ 1.11454,0.35208,-2.55613,-0.706082,-1.88496,-0.62434,-1.88496,-1.57477, $ \\$  -0.172061,-3.06383,-2.78457,2.16119,-1.88496,-2.33229,1.25664,0.13318 $} \\
          \hline
           81 & $ 68$ & $(2,0)$ & \makecell[c]{$ -0.13318,0.706082,1.57477,-2.16119,1.88496,-1.11454,1.88496,0.172061, $ \\$2.33229,-0.35208,0.62434,3.06383,-1.25664,2.55613,1.88496,2.78457  $} \\
          \hline
           82 & $68 $ & $(2,0)$ & \makecell[c]{$ 1.57477,-2.16119,-0.13318,0.706082,1.88496,0.172061,1.88496,-1.11454, $ \\$  0.62434,3.06383,2.33229,-0.35208,1.88496,2.78457,-1.25664,2.55613 $} \\
          \hline
           83 & $68 $ & $(2,0)$ & \makecell[c]{$ 0.62434,3.06383,2.33229,-0.35208,1.88496,2.78457,-1.25664,2.55613, $ \\$ 1.57477,-2.16119,-0.13318,0.706082,1.88496,0.172061,1.88496,-1.11454  $} \\
          \hline
           84 & $ 68$ & $(2,0)$ & \makecell[c]{$ 2.33229,-0.35208,0.62434,3.06383,-1.25664,2.55613,1.88496,2.78457, $ \\$ -0.13318,0.706082,1.57477,-2.16119,1.88496,-1.11454,1.88496,0.172061  $} \\
          \hline
           85 & $3 $ & $(2,1)$ & \makecell[c]{$-1.23134,-1.42571,2.2799,2.94948,1.16544,1.42637,-1.38096,0.83118,  $ \\$ 2.44672,2.25236,1.21313,1.88271,-0.173026,2.76484,0.521724,-1.0715 $} \\
          \hline
           86 & $3 $ & $(2,1)$ & \makecell[c]{$ 2.44672,2.25236,1.21313,1.88271,-0.173026,2.76484,0.521724,-1.0715, $ \\$ -1.23134,-1.42571,2.2799,2.94948,1.16544,1.42637,-1.38096,0.83118  $} \\
          \hline
           87 & $ 27$ & $(2,1)$ & \makecell[c]{$-1.15546,-0.297893,-1.54086,1.38646,-0.883402,-0.923013,3.0903,-1.75512,  $ \\$ -3.07932,-2.22175,-1.62208,1.30524,0.974834,-2.78125,-0.625598,1.96077  $} \\
          \hline
           88 & $27 $ & $(2,1)$ & \makecell[c]{$ -3.07932,-2.22175,-1.62208,1.30524,0.974834,-2.78125,-0.625598,1.96077, $ \\$   -1.15546,-0.297893,-1.54086,1.38646,-0.883402,-0.923013,3.0903,-1.75512 $} \\
          \hline
           89 & $35 $ & $(2,1)$ & \makecell[c]{$  -0.937357,-1.81154,0.572197,-0.179498,-2.88492,1.70682,-0.169554,2.13305,$ \\$ -0.937357,-1.81154,0.572197,-0.179498,-2.88492,1.70682,-0.169554,2.13305  $} \\
          \hline
           90 & $ 43$ & $(2,1)$ & \makecell[c]{$2.2799,2.94948,-1.23134,-1.42571,-1.38096,0.83118,1.16544,1.42637,  $ \\$  1.21313,1.88271,2.44672,2.25236,0.521724,-1.0715,-0.173026,2.76484  $} \\
          \hline
           91 & $43 $ & $(2,1)$ & \makecell[c]{$ 1.21313,1.88271,2.44672,2.25236,0.521724,-1.0715,-0.173026,2.76484, $ \\$  2.2799,2.94948,-1.23134,-1.42571,-1.38096,0.83118,1.16544,1.42637  $} \\
          \hline
           92 & $67 $ & $(2,1)$ & \makecell[c]{$ -1.54086,1.38646,-1.15546,-0.297893,3.0903,-1.75512,-0.883402,-0.923013, $ \\$-1.62208,1.30524,-3.07932,-2.22175,-0.625598,1.96077,0.974834,-2.78125  $} \\
          \hline
           93 & $67 $ & $(2,1)$ & \makecell[c]{$ -1.62208,1.30524,-3.07932,-2.22175,-0.625598,1.96077,0.974834,-2.78125, $ \\$  -1.54086,1.38646,-1.15546,-0.297893,3.0903,-1.75512,-0.883402,-0.923013  $} \\
          \hline
           94 & $ 75$ & $(2,1)$ & \makecell[c]{$ 0.572197,-0.179498,-0.937357,-1.81154,-0.169554,2.13305,-2.88492,1.70682, $ \\$ 0.572197,-0.179498,-0.937357,-1.81154,-0.169554,2.13305,-2.88492,1.70682  $} \\
          \hline
           95 & $ 32$ & $(2,2)$ & \makecell[c]{$ -0.706082,1.11454,0.35208,-2.55613,-1.57477,-1.88496,-0.62434,-1.88496, $ \\$  2.16119,-0.172061,-3.06383,-2.78457,0.13318,-1.88496,-2.33229,1.25664  $} \\
          \hline
           96 & $32 $ & $(2,2)$ & \makecell[c]{$ -3.06383,-2.78457,2.16119,-0.172061,-2.33229,1.25664,0.13318,-1.88496, $ \\$ 0.35208,-2.55613,-0.706082,1.11454,-0.62434,-1.88496,-1.57477,-1.88496  $} \\
          \hline
           97 & $32 $ & $(2,2)$ & \makecell[c]{$ 2.16119,-0.172061,-3.06383,-2.78457,0.13318,-1.88496,-2.33229,1.25664, $ \\$  -0.706082,1.11454,0.35208,-2.55613,-1.57477,-1.88496,-0.62434,-1.88496  $} \\
          \hline
           98 & $ 32$ & $(2,2)$ & \makecell[c]{$ 0.35208,-2.55613,-0.706082,1.11454,-0.62434,-1.88496,-1.57477,-1.88496, $ \\$  -3.06383,-2.78457,2.16119,-0.172061,-2.33229,1.25664,0.13318,-1.88496  $} \\
          \hline
           99 & $48 $ & $(2,2)$ & \makecell[c]{$ 0.706082,1.57477,-2.16119,-0.13318,-1.11454,1.88496,0.172061,1.88496, $ \\$ -0.35208,0.62434,3.06383,2.33229,2.55613,1.88496,2.78457,-1.25664  $} \\
          \hline
           100 & $ 48$ & $(2,2)$ & \makecell[c]{$ -2.16119,-0.13318,0.706082,1.57477,0.172061,1.88496,-1.11454,1.88496, $ \\$3.06383,2.33229,-0.35208,0.62434,2.78457,-1.25664,2.55613,1.88496  $} \\
          \hline
           101 & $48 $ & $(2,2)$ & \makecell[c]{$3.06383,2.33229,-0.35208,0.62434,2.78457,-1.25664,2.55613,1.88496,  $ \\$  -2.16119,-0.13318,0.706082,1.57477,0.172061,1.88496,-1.11454,1.88496  $} \\
          \hline
           102 & $ 48$ & $(2,2)$ & \makecell[c]{$ -0.35208,0.62434,3.06383,2.33229,2.55613,1.88496,2.78457,-1.25664, $ \\$  0.706082,1.57477,-2.16119,-0.13318,-1.11454,1.88496,0.172061,1.88496 $} \\
          \hline
           103 & $3 $ & $(2,3)$ & \makecell[c]{$ 2.25236,1.21313,1.88271,2.44672,2.76484,0.521724,-1.0715,-0.173026, $ \\$ -1.42571,2.2799,2.94948,-1.23134,1.42637,-1.38096,0.83118,1.16544$} \\
          \hline
           104 & $ 3$ & $(2,3)$ & \makecell[c]{$-1.42571,2.2799,2.94948,-1.23134,1.42637,-1.38096,0.83118,1.16544,  $ \\$  2.25236,1.21313,1.88271,2.44672,2.76484,0.521724,-1.0715,-0.173026 $} \\
          \hline
           105 & $ 27$ & $(2,3)$ & \makecell[c]{$-0.297893,-1.54086,1.38646,-1.15546,-0.923013,3.0903,-1.75512,-0.883402,  $ \\$ -2.22175,-1.62208,1.30524,-3.07932,-2.78125,-0.625598,1.96077,0.974834$} \\
          \hline
           106 & $ 27$ & $(2,3)$ & \makecell[c]{$  -2.22175,-1.62208,1.30524,-3.07932,-2.78125,-0.625598,1.96077,0.974834,$ \\$ -0.297893,-1.54086,1.38646,-1.15546,-0.923013,3.0903,-1.75512,-0.883402 $} \\
          \hline
           107 & $ 35$ & $(2,3)$ & \makecell[c]{$ -1.81154,0.572197,-0.179498,-0.937357,1.70682,-0.169554,2.13305,-2.88492, $ \\$ -1.81154,0.572197,-0.179498,-0.937357,1.70682,-0.169554,2.13305,-2.88492$} \\
          \hline
           108 & $43 $ & $(2,3)$ & \makecell[c]{$1.88271,2.44672,2.25236,1.21313,-1.0715,-0.173026,2.76484,0.521724,  $ \\$ 2.94948,-1.23134,-1.42571,2.2799,0.83118,1.16544,1.42637,-1.38096  $} \\
          \hline
           109 & $43 $ & $(2,3)$ & \makecell[c]{$2.94948,-1.23134,-1.42571,2.2799,0.83118,1.16544,1.42637,-1.38096,  $ \\$ 1.88271,2.44672,2.25236,1.21313,-1.0715,-0.173026,2.76484,0.521724  $} \\
          \hline
           110 & $67 $ & $(2,3)$ & \makecell[c]{$ 1.30524,-3.07932,-2.22175,-1.62208,1.96077,0.974834,-2.78125,-0.625598, $ \\$  1.38646,-1.15546,-0.297893,-1.54086,-1.75512,-0.883402,-0.923013,3.0903  $} \\
          \hline
           111 & $67 $ & $(2,3)$ & \makecell[c]{$1.38646,-1.15546,-0.297893,-1.54086,-1.75512,-0.883402,-0.923013,3.0903,  $ \\$  1.30524,-3.07932,-2.22175,-1.62208,1.96077,0.974834,-2.78125,-0.625598$} \\
          \hline
           112 & $ 75$ & $(2,3)$ & \makecell[c]{$-0.179498,-0.937357,-1.81154,0.572197,2.13305,-2.88492,1.70682,-0.169554,  $ \\$ -0.179498,-0.937357,-1.81154,0.572197,2.13305,-2.88492,1.70682,-0.169554   $} \\
          \hline
           113 & $17 $ & $(3,0)$ & \makecell[c]{$ -2.76484,1.42571,-1.42637,-2.25236,-0.521724,-2.2799,1.38096,-1.21313, $ \\$ 1.0715,-2.94948,-0.83118,-1.88271,0.173026,1.23134,-1.16544,-2.44672 $} \\
          \hline
           114 & $ 17$ & $(3,0)$ & \makecell[c]{$ -1.42637,-2.25236,-2.76484,1.42571,1.38096,-1.21313,-0.521724,-2.2799, $ \\$ -0.83118,-1.88271,1.0715,-2.94948,-1.16544,-2.44672,0.173026,1.23134   $} \\
          \hline
           115 & $25 $ & $(3,0)$ & \makecell[c]{$-2.13305,0.179498,-2.13305,0.179498,2.88492,0.937357,2.88492,0.937357,  $ \\$-1.70682,1.81154,-1.70682,1.81154,0.169554,-0.572197,0.169554,-0.572197  $} \\
          \hline
           116 & $ 33$ & $(3,0)$ & \makecell[c]{$ -1.96077,-1.38646,1.75512,-1.30524,-0.974834,1.15546,0.883402,3.07932, $ \\$ 2.78125,0.297893,0.923013,2.22175,0.625598,1.54086,-3.0903,1.62208  $} \\
          \hline
           117 & $33 $ & $(3,0)$ & \makecell[c]{$  1.75512,-1.30524,-1.96077,-1.38646,0.883402,3.07932,-0.974834,1.15546,$ \\$  0.923013,2.22175,2.78125,0.297893,-3.0903,1.62208,0.625598,1.54086 $} \\
          \hline
           118 & $ 57$ & $(3,0)$ & \makecell[c]{$ -0.83118,-1.88271,1.0715,-2.94948,-1.16544,-2.44672,0.173026,1.23134, $ \\$-1.42637,-2.25236,-2.76484,1.42571,1.38096,-1.21313,-0.521724,-2.2799  $} \\
          \hline
           119 & $ 57$ & $(3,0)$ & \makecell[c]{$  1.0715,-2.94948,-0.83118,-1.88271,0.173026,1.23134,-1.16544,-2.44672,$ \\$ -2.76484,1.42571,-1.42637,-2.25236,-0.521724,-2.2799,1.38096,-1.21313   $} \\
          \hline
           120 & $ 65$ & $(3,0)$ & \makecell[c]{$ -1.70682,1.81154,-1.70682,1.81154,0.169554,-0.572197,0.169554,-0.572197, $ \\$-2.13305,0.179498,-2.13305,0.179498,2.88492,0.937357,2.88492,0.937357   $} \\
          \hline
           121 & $ 73$ & $(3,0)$ & \makecell[c]{$ 2.78125,0.297893,0.923013,2.22175,0.625598,1.54086,-3.0903,1.62208, $ \\$  -1.96077,-1.38646,1.75512,-1.30524,-0.974834,1.15546,0.883402,3.07932  $} \\
          \hline
           122 & $ 73$ & $(3,0)$ & \makecell[c]{$ 0.923013,2.22175,2.78125,0.297893,-3.0903,1.62208,0.625598,1.54086, $ \\$ 1.75512,-1.30524,-1.96077,-1.38646,0.883402,3.07932,-0.974834,1.15546 $} \\
          \hline
           123 & $12 $ & $(3,1)$ & \makecell[c]{$  0.605818,-1.20169,1.73978,-2.70077,3.05334,1.2107,-2.63533,2.03946,$ \\$ 2.87337,1.03073,-1.09698,-2.70538,2.14417,0.33666,0.50165,2.34429 $} \\
          \hline
           124 & $12 $ & $(3,1)$ & \makecell[c]{$ -1.09698,-2.70538,2.87337,1.03073,0.50165,2.34429,2.14417,0.33666, $ \\$ 1.73978,-2.70077,0.605818,-1.20169,-2.63533,2.03946,3.05334,1.2107 $} \\
          \hline
           125 & $ 20$ & $(3,1)$ & \makecell[c]{$  2.80847,2.80847,1.23768,-2.80847,1.23768,-2.80847,-1.23768,-1.23768,$ \\$ 1.23768,-2.80847,2.80847,2.80847,-1.23768,-1.23768,1.23768,-2.80847  $} \\
          \hline
           126 & $28 $ & $(3,1)$ & \makecell[c]{$ -2.34429,2.70077,-2.03946,2.70538,-2.14417,-0.605818,-3.05334,-2.87337, $ \\$-0.33666,1.20169,-1.2107,-1.03073,-0.50165,-1.73978,2.63533,1.09698  $} \\
          \hline
           127 & $28 $ & $(3,1)$ & \makecell[c]{$  -1.2107,-1.03073,-0.33666,1.20169,2.63533,1.09698,-0.50165,-1.73978,$ \\$  -2.03946,2.70538,-2.34429,2.70077,-3.05334,-2.87337,-2.14417,-0.605818  $} \\
          \hline
           128 & $ 52$ & $(3,1)$ & \makecell[c]{$  2.87337,1.03073,-1.09698,-2.70538,2.14417,0.33666,0.50165,2.34429,$ \\$0.605818,-1.20169,1.73978,-2.70077,3.05334,1.2107,-2.63533,2.03946 $} \\
          \hline
           129 & $52 $ & $(3,1)$ & \makecell[c]{$  1.73978,-2.70077,0.605818,-1.20169,-2.63533,2.03946,3.05334,1.2107,$ \\$ -1.09698,-2.70538,2.87337,1.03073,0.50165,2.34429,2.14417,0.33666  $} \\
          \hline
           130 & $60 $ & $(3,1)$ & \makecell[c]{$ 1.23768,-2.80847,2.80847,2.80847,-1.23768,-1.23768,1.23768,-2.80847, $ \\$ 2.80847,2.80847,1.23768,-2.80847,1.23768,-2.80847,-1.23768,-1.23768  $} \\
          \hline
           131 & $ 68$ & $(3,1)$ & \makecell[c]{$-0.33666,1.20169,-1.2107,-1.03073,-0.50165,-1.73978,2.63533,1.09698,  $ \\$ -2.34429,2.70077,-2.03946,2.70538,-2.14417,-0.605818,-3.05334,-2.87337   $} \\
          \hline
           132 & $ 68$ & $(3,1)$ & \makecell[c]{$ -2.03946,2.70538,-2.34429,2.70077,-3.05334,-2.87337,-2.14417,-0.605818, $ \\$ -1.2107,-1.03073,-0.33666,1.20169,2.63533,1.09698,-0.50165,-1.73978  $} \\
          \hline
           133 & $ 5$ & $(3,2)$ & \makecell[c]{$ 0.179498,-2.13305,0.179498,-2.13305,0.937357,2.88492,0.937357,2.88492, $ \\$  1.81154,-1.70682,1.81154,-1.70682,-0.572197,0.169554,-0.572197,0.169554 $} \\
          \hline
           134 & $ 13$ & $(3,2)$ & \makecell[c]{$-1.30524,-1.96077,-1.38646,1.75512,3.07932,-0.974834,1.15546,0.883402,  $ \\$ 2.22175,2.78125,0.297893,0.923013,1.62208,0.625598,1.54086,-3.0903 $} \\
          \hline
           135 & $ 13$ & $(3,2)$ & \makecell[c]{$  -1.38646,1.75512,-1.30524,-1.96077,1.15546,0.883402,3.07932,-0.974834,$ \\$0.297893,0.923013,2.22175,2.78125,1.54086,-3.0903,1.62208,0.625598 $} \\
          \hline
           136 & $ 37$ & $(3,2)$ & \makecell[c]{$ -1.88271,1.0715,-2.94948,-0.83118,-2.44672,0.173026,1.23134,-1.16544, $ \\$-2.25236,-2.76484,1.42571,-1.42637,-1.21313,-0.521724,-2.2799,1.38096   $} \\
          \hline
           137 & $ 37$ & $(3,2)$ & \makecell[c]{$  -2.94948,-0.83118,-1.88271,1.0715,1.23134,-1.16544,-2.44672,0.173026,$ \\$  1.42571,-1.42637,-2.25236,-2.76484,-2.2799,1.38096,-1.21313,-0.521724  $} \\
          \hline
           138 & $ 45$ & $(3,2)$ & \makecell[c]{$1.81154,-1.70682,1.81154,-1.70682,-0.572197,0.169554,-0.572197,0.169554,  $ \\$ 0.179498,-2.13305,0.179498,-2.13305,0.937357,2.88492,0.937357,2.88492  $} \\
          \hline
           139 & $53 $ & $(3,2)$ & \makecell[c]{$0.297893,0.923013,2.22175,2.78125,1.54086,-3.0903,1.62208,0.625598,  $ \\$  -1.38646,1.75512,-1.30524,-1.96077,1.15546,0.883402,3.07932,-0.974834  $} \\
          \hline
           140 & $53 $ & $(3,2)$ & \makecell[c]{$2.22175,2.78125,0.297893,0.923013,1.62208,0.625598,1.54086,-3.0903,  $ \\$ -1.30524,-1.96077,-1.38646,1.75512,3.07932,-0.974834,1.15546,0.883402  $} \\
          \hline
           141 & $ 77$ & $(3,2)$ & \makecell[c]{$  -2.25236,-2.76484,1.42571,-1.42637,-1.21313,-0.521724,-2.2799,1.38096,$ \\$ -1.88271,1.0715,-2.94948,-0.83118,-2.44672,0.173026,1.23134,-1.16544  $} \\
          \hline
           142 & $77 $ & $(3,2)$ & \makecell[c]{$ 1.42571,-1.42637,-2.25236,-2.76484,-2.2799,1.38096,-1.21313,-0.521724, $ \\$ -2.94948,-0.83118,-1.88271,1.0715,1.23134,-1.16544,-2.44672,0.173026   $} \\
          \hline
           143 & $12 $ & $(3,3)$ & \makecell[c]{$-2.70538,2.87337,1.03073,-1.09698,2.34429,2.14417,0.33666,0.50165,  $ \\$   -2.70077,0.605818,-1.20169,1.73978,2.03946,3.05334,1.2107,-2.63533 $} \\
          \hline
           144 & $12 $ & $(3,3)$ & \makecell[c]{$ -1.20169,1.73978,-2.70077,0.605818,1.2107,-2.63533,2.03946,3.05334, $ \\$  1.03073,-1.09698,-2.70538,2.87337,0.33666,0.50165,2.34429,2.14417  $} \\
          \hline
           145 & $20 $ & $(3,3)$ & \makecell[c]{$  2.80847,1.23768,-2.80847,2.80847,-2.80847,-1.23768,-1.23768,1.23768,$ \\$  -2.80847,2.80847,2.80847,1.23768,-1.23768,1.23768,-2.80847,-1.23768 $} \\
          \hline
           146 & $28 $ & $(3,3)$ & \makecell[c]{$-1.03073,-0.33666,1.20169,-1.2107,1.09698,-0.50165,-1.73978,2.63533,  $ \\$ 2.70538,-2.34429,2.70077,-2.03946,-2.87337,-2.14417,-0.605818,-3.05334  $} \\
          \hline
           147 & $ 28$ & $(3,3)$ & \makecell[c]{$ 2.70077,-2.03946,2.70538,-2.34429,-0.605818,-3.05334,-2.87337,-2.14417, $ \\$  1.20169,-1.2107,-1.03073,-0.33666,-1.73978,2.63533,1.09698,-0.50165  $} \\
          \hline
           148 & $52 $ & $(3,3)$ & \makecell[c]{$1.03073,-1.09698,-2.70538,2.87337,0.33666,0.50165,2.34429,2.14417,  $ \\$ -1.20169,1.73978,-2.70077,0.605818,1.2107,-2.63533,2.03946,3.05334  $} \\
          \hline
           149 & $ 52$ & $(3,3)$ & \makecell[c]{$ -2.70077,0.605818,-1.20169,1.73978,2.03946,3.05334,1.2107,-2.63533, $ \\$ -2.70538,2.87337,1.03073,-1.09698,2.34429,2.14417,0.33666,0.50165  $} \\
          \hline
           150 & $ 60$ & $(3,3)$ & \makecell[c]{$ -2.80847,2.80847,2.80847,1.23768,-1.23768,1.23768,-2.80847,-1.23768, $ \\$ 2.80847,1.23768,-2.80847,2.80847,-2.80847,-1.23768,-1.23768,1.23768  $} \\
          \hline
           151 & $ 68$ & $(3,3)$ & \makecell[c]{$2.70538,-2.34429,2.70077,-2.03946,-2.87337,-2.14417,-0.605818,-3.05334,  $ \\$-1.03073,-0.33666,1.20169,-1.2107,1.09698,-0.50165,-1.73978,2.63533    $} \\
          \hline
           152 & $ 68$ & $(3,3)$ & \makecell[c]{$ 1.20169,-1.2107,-1.03073,-0.33666,-1.73978,2.63533,1.09698,-0.50165, $ \\$ 2.70077,-2.03946,2.70538,-2.34429,-0.605818,-3.05334,-2.87337,-2.14417   $} \\
          \hline

             \caption{$\left(\omega_j, \tau_j, \xi_j\right)$ for $\mathbb{Z}/4\mathbb{Z}\times \mathbb{Z}/4\mathbb{Z}+ 16$}
    \label{table:solZ4Z4}
 \end{longtable} }
\end{center}

The table below gives the solutions $\left(\omega_j, \tau_j, \xi_j\right)$ in Table \ref{DataZ8} for the near-group category $J_8^1$ of type $\mathbb Z/8\mathbb Z+8$, where $j=1, \cdots, 44$. The $\omega_i$ equals to $\zeta_{48}^k$ for some $k \in \mathbb{Z}$, which are recorded in the corresponding column. 
 \begin{center}\small{
   \begin{longtable}{|c|c|c|c|}
   \hline
    $\#$ & $\omega$ & $\tau$  & $\xi$ \\
    \hline
    \endfirsthead

    \hline
    $\#$ & $\omega$ & $\tau$  & $\xi$ \\
    \hline
    \endhead
    
    \hline
    \multicolumn{4}{r}{\textit{Continued on next page}} \\
    \endfoot
    \endlastfoot
 
  1  & $24$     & $0$ & $1.8326, 0.840155, -2.87979, -2.67275, 1.8326, 0.840155, -2.87979,  -2.67275$       \\\hline
2  &$40$   & $0$ & $-2.35619, -2.03783, -2.19092, 2.32649, -2.35619, 0.0297022, 0.620122, -1.88916 $           \\\hline
3  & $40$    & $0$ & $-2.35619, 0.0297022, 0.620122, -1.88916, -2.35619, -2.03783, -2.19092, 2.32649  $        \\\hline
4  & $16 $    & $0$ &$ -1.5708, 1.76553, -1.06745, 0.914113, 1.5708, 3.01288, 1.06745, 2.16146   $        \\ \hline
5  &$16 $  & $0$ & $1.5708, 3.01288, 1.06745, 2.16146, -1.5708, 1.76553, -1.06745, \
0.914113$  \\\hline
6  &$ 39 $   & $1$ & $2.86523, -0.967181, -2.38552, -2.68973, 0.113755, -1.3573, 3.01698, \
-0.428819$ \\\hline
7  & $39$    & $1$ & $0.113755, -1.3573, 3.01698, -0.428819, 2.86523, -0.967181, -2.38552, \
-2.68973 $ \\\hline
8  & $7$     & $1$ & $-0.960866, 0.764517, 0.941698, 2.72147, -2.6727, -0.665243, 1.79457, 0.432749 $ \\\hline
9  & $7$    & $1$ & $0.7022, -0.89855, -0.94287, -2.38689, 0.955559, 1.98968, 0.619742, \
2.31732 $\\\hline
10 & $7  $   & $1$ &$ -2.6727, -0.665243, 1.79457, 0.432749, -0.960866, 0.764517, 0.941698, \
2.72147$ \\ \hline
11 & $7$     & $1$ & $0.955559, 1.98968, 0.619742, 2.31732, 0.7022, -0.89855, -0.94287, \
-2.38689 $   \\\hline
12 & $12$     & $2$ & $-2.67275, 1.8326, 0.840155, -2.87979, -2.67275, 1.8326, 0.840155, \
-2.87979$          \\\hline
13 & $28 $   & $2$ & $2.32649, -2.35619, 0.0297022, 0.620122, -1.88916, -2.35619, -2.03783, \
-2.19092  $       \\\hline
14 & $28$    &$2$ & $-1.88916, -2.35619, -2.03783, -2.19092, 2.32649, -2.35619, 0.0297022, \
0.620122   $      \\ \hline
15 & $4 $  & $2$ &$ 2.16146, -1.5708, 1.76553, -1.06745, 0.914113, 1.5708, 3.01288, \
1.06745 $ \\\hline
16 & $4 $  & $2$ & $0.914113, 1.5708, 3.01288, 1.06745, 2.16146, -1.5708, 1.76553, \
-1.06745 $\\\hline
17 & $15$     & $3$ & $-0.428819, 2.86523, -0.967181, -2.38552, -2.68973, 0.113755, -1.3573, \
3.01698 $\\\hline
18 &$ 15 $    & $3$ &$ -2.68973, 0.113755, -1.3573, 3.01698, -0.428819, 2.86523, -0.967181, \
-2.38552 $\\\hline
19 & $31$     & $3$ & $2.72147, -2.6727, -0.665243, 1.79457, 0.432749, -0.960866, 0.764517, \
0.941698 $ \\\hline
20 & $31$     & $3$ &$ -2.38689, 0.955559, 1.98968, 0.619742, 2.31732, 0.7022, -0.89855, \
-0.94287 $\\ \hline
21 & $31  $   & $3$ & $2.31732, 0.7022, -0.89855, -0.94287, -2.38689, 0.955559, 1.98968, \
0.619742$ \\\hline
22 & $31$     & $3$ & $0.432749, -0.960866, 0.764517, 0.941698, 2.72147, -2.6727, -0.665243, \
1.79457$\\\hline
23 & $24 $   & $4$ &$ -2.87979, -2.67275, 1.8326, 0.840155, -2.87979, -2.67275, 1.8326, \
0.840155 $\\\hline
24 & $16$    & $4$ & $1.06745, 2.16146, -1.5708, 1.76553, -1.06745, 0.914113, 1.5708, \
3.01288$ \\\hline
25 & $16$    &$4$ & $-1.06745, 0.914113, 1.5708, 3.01288, 1.06745, 2.16146, -1.5708, \
1.76553  $ \\\hline
26 & $40 $    & $4 $& $-2.19092, 2.32649, -2.35619, 0.0297022, 0.620122, -1.88916, -2.35619, \
-2.03783 $  \\ \hline
27 &$ 40$    & $4$ &$ 0.620122, -1.88916, -2.35619, -2.03783, -2.19092, 2.32649, -2.35619, \
0.0297022 $ \\\hline
28 & $15 $    & $5$ &$ -2.38552, -2.68973, 0.113755, -1.3573, 3.01698, -0.428819, 2.86523, \
-0.967181$ \\\hline
29 & $15 $    & $5$& $3.01698, -0.428819, 2.86523, -0.967181, -2.38552, -2.68973, 0.113755, \
-1.3573$\\\hline
30 & $31  $  & $5$ & $1.79457, 0.432749, -0.960866, 0.764517, 0.941698, 2.72147, -2.6727, \
-0.665243$\\\hline
31 & $31$    & $5$ &$ 0.619742, 2.31732, 0.7022, -0.89855, -0.94287, -2.38689, 0.955559, \
1.98968$\\\hline
32 & $31 $   & $5$ & $-0.94287, -2.38689, 0.955559, 1.98968, 0.619742, 2.31732, 0.7022, \
-0.89855 $\\ \hline
33 & $31$    & $5$& $0.941698, 2.72147, -2.6727, -0.665243, 1.79457, 0.432749, -0.960866, \
0.764517$\\\hline
34 & $12$    &$6$& $0.840155, -2.87979, -2.67275, 1.8326, 0.840155, -2.87979, -2.67275, \
1.8326 $\\\hline
35 & $28$   & $6$ & $-2.03783, -2.19092, 2.32649, -2.35619, 0.0297022, 0.620122, -1.88916, \
-2.35619 $\\\hline
36 & $28$    & $6 $& $0.0297022, 0.620122, -1.88916, -2.35619, -2.03783, -2.19092, 2.32649, \
-2.35619 $\\\hline
37 & $4$   &$ 6 $& $3.01288, 1.06745, 2.16146, -1.5708, 1.76553, -1.06745, 0.914113, \
1.5708  $  \\\hline
38 & $4$     & $6$&$ 1.76553, -1.06745, 0.914113, 1.5708, 3.01288, 1.06745, 2.16146, \
-1.5708 $  \\ \hline
39 & $39$     & $7$&$ -0.967181, -2.38552, -2.68973, 0.113755, -1.3573, 3.01698, -0.428819, \
2.86523$ \\\hline
40 & $39$  &$ 7$ &$ -1.3573, 3.01698, -0.428819, 2.86523, -0.967181, -2.38552, -2.68973, \
0.113755$ \\\hline
41 & $7$     &$ 7$ &$ -0.89855, -0.94287, -2.38689, 0.955559, 1.98968, 0.619742, 2.31732, \
0.7022$\\\hline
42 &$7$      & $7$ & $0.764517, 0.941698, 2.72147, -2.6727, -0.665243, 1.79457, 0.432749, \
-0.960866$\\\hline
43 &$7$    & $7$ & $1.98968, 0.619742, 2.31732, 0.7022, -0.89855, -0.94287, -2.38689, \
0.955559 $\\\hline
44 &   $7$  & $7$  &  $-0.665243, 1.79457, 0.432749, -0.960866, 0.764517, 0.941698, 2.72147, \
-2.6727 $\\ \hline

\caption{$(\omega_j, \tau_j, \xi_j)$ for $J_{8}^1$ in Table \ref{DataZ8}.}
\label{table:triplesforJ81}
 \end{longtable} }
\end{center}
 


\bibliographystyle{abbrv}
\bibliography{zbib}

\end{document}